\newtheorem*{theorem*}{\hspace{-6.3mm}\textbf{Theorem}}  
\newtheorem{theoremcounter}{Theorem Counter}[section]
\theoremstyle{remark}
\theoremstyle{definition}
\newtheorem{definition}[theoremcounter]{Definition}
\newtheorem{example}{Example}
\theoremstyle{plain}
\newtheorem{lemma}[theoremcounter]{Lemma}
\newtheorem{proposition}[theoremcounter]{Proposition}
\newtheorem{corollary}[theoremcounter]{Corollary}
\newtheorem{theorem}[theoremcounter]{Theorem}
\numberwithin{equation}{section}
\newcommand{\Z}{\mathbb{Z}}
\newcommand{\Q}{\mathbb{Q}}
\newcommand{\R}{\mathbb{R}}
\newcommand{\C}{\mathbb{C}}
\newcommand{\J}{\mathbb{J}}
\newcommand{\dd}{\mathrm{d}}
\newcommand{\bbH}{\mathbb{H}}
\newcommand{\calD}{\mathcal{D}}
\newcommand{\calQ}{\mathcal{Q}}
\newcommand{\calR}{\mathcal{R}}
\newcommand{\merM}{M^\mathrm{mer}}
\DeclareMathOperator{\ImNew}{Im}
\renewcommand{\Im}{\ImNew}
\DeclareMathOperator{\ReNew}{Re}
\renewcommand{\Re}{\ReNew}
\DeclareMathOperator{\divNew}{div}
\renewcommand{\div}{\divNew}
\DeclareMathOperator{\SL}{SL}
\DeclareMathOperator{\PSL}{PSL}
\DeclareMathOperator{\GL}{GL}
\DeclareMathOperator{\ord}{ord}
\DeclareMathOperator{\Div}{Div}
\DeclareMathOperator{\End}{End}
\newcommand{\pmat}[1]{\begin{pmatrix}#1\end{pmatrix}}
\newcommand{\smat}[1]{\bigl(\begin{smallmatrix}#1\end{smallmatrix}\bigr)}
\begin{document}

\title[]{Hecke equivariance of the divisor map}

\author[]{Daeyeol Jeon}
\address{Department of Mathematics Education, Kongju National University, Gongju, 32588, Republic of Korea}
\email{dyjeon@kongju.ac.kr}

\author[]{Soon-Yi Kang}
\address{Department of Mathematics, Kangwon National University, Chuncheon, 24341, Republic of Korea}
\email{sy2kang@kangwon.ac.kr}

\author[]{Chang Heon Kim}
\address{Department of Mathematics, Sungkyunkwan University, Suwon, 16419, Republic of Korea\\ School of Mathematics, Korea Institute for Advanced Study (KIAS), Seoul 02455, Republic of Korea
}
\email{chhkim@skku.edu}

\author[]{Toshiki Matsusaka}
\address{Faculty of Mathematics, Kyushu University, Motooka 744, Nishi-ku, Fukuoka 819-0395, Japan}
\email{matsusaka@math.kyushu-u.ac.jp}


\subjclass[2020]{11F12, 11F25, 11F30, 11F37}



\maketitle

\begin{abstract}
	We study the multiplicative Hecke operators acting on the space of meromorphic modular forms, and show that the divisor map to divisors on $X_0(N)$ is a Hecke equivariant map. As applications, we investigate the divisor sum formula of Bruinier--Kohnen--Ono and more general Rohrlich-type divisor sums for polyharmonic Maass forms, discussing several implications for the Hecke action and its relation to the self-adjointness of the Hecke operators.
\end{abstract}


\section{Introduction}\label{sec:Intro}

Hecke operators are linear operators acting on the space of modular forms. A holomorphic function $f: \bbH \to \C$ defined on the upper-half plane $\bbH \coloneqq \{\tau \in \C : \Im(\tau) > 0\}$ is called a (\emph{weakly holomorphic}) \emph{modular form} of weight $k \in \Z$ if it satisfies the functional equation $f(\frac{a\tau+b}{c\tau+d}) = (c\tau+d)^k f(\tau)$ for any $\smat{a & b \\ c & d} \in \SL_2(\Z)$ and admits a Fourier series expansion of the form
\[
	f(\tau) = \sum_{m = m_0}^\infty c_f(m) q^m
\]
for some $m_0 \in \Z$ and coefficients $c_f(m) \in \C$, where $q \coloneqq e^{2\pi i\tau}$. For a positive integer $n$, the \emph{Hecke operator} $T(n)$ is defined, albeit in an ad hoc manner, as follows:
\[
	(f|_k T(n))(\tau) = n^{1-k/2} \sum_{m \in \Z} \left(\sum_{0 < d \mid (m,n)} d^{k-1} c_f \left(\frac{mn}{d^2}\right) \right) q^m.
\]
When considering cusp forms, simultaneous eigenfunctions of the Hecke operators play a crucial role in number theory. However, if the Fourier expansions of modular forms contain terms with negative exponents, such eigenfunctions cannot exist. Nevertheless, numerous studies have investigated the action of Hecke operators on weakly holomorphic modular forms from various perspectives. Notably, research by Borcherds~\cite{Borcherds1992} and Asai--Kaneko--Ninomiya~\cite{AsaiKanekoNinomiya1997} on the elliptic modular $j$-function has been especially influential.

In what follows, we focus on a particular study by Bruinier--Kohnen--Ono~\cite{BruinierKohnenOno2004}, which investigate the relationship between special values and Fourier coefficients of modular forms. To begin with, we define the \emph{elliptic modular $j$-function} $j_1(\tau)$ to be the unique weakly holomorphic modular form of weight $0$ whose Fourier expansion is $j_1(\tau) = q^{-1} + 24 + O(q)$. Given a positive integer $n$, we then define
\[
	j_n(\tau) \coloneqq (j_1|_0T(n))(\tau) = q^{-n} + 24 \sigma_1(n) + O(q),
\]
which is again a weight $0$ weakly holomorphic modular form. To broaden the scope, we also consider \emph{meromorphic modular forms}, which generalize weakly holomorphic modular forms by allowing poles in $\bbH$. Within this framework, Bruinier, Kohnen, and Ono established the following theorem.

\begin{theorem*}[Bruinier--Kohnen--Ono~\cite{BruinierKohnenOno2004}]
	Let $\Theta \coloneqq \frac{1}{2\pi i}\frac{\dd}{\dd \tau} = q \frac{\dd}{\dd q}$. For a positive integer $n$ and a meromorphic modular form $f$ (of weight $k \in \Z$), we have
	\begin{align}\label{thm:BKO}
	\begin{split}
		(j_n, f)_\mathrm{BKO} &\coloneqq \sum_{z \in \SL_2(\Z) \backslash \bbH} \frac{\mathrm{ord}_z(f)}{|\PSL_2(\Z)_z|} j_n(z) + 24 \sigma_1(n) \mathrm{ord}_{i\infty}(f)\\
			&= - \mathrm{Coeff}_{q^n} \left(\frac{\Theta f}{f}\right),
	\end{split}
	\end{align}
	where $\mathrm{ord}_z(f)$ is the order of $f$ at $\tau = z$ and $\mathrm{Coeff}_{q^n}(f)$ denotes the $n$th Fourier coefficient of $f(\tau)$.
\end{theorem*}

For instance, suppose that $f(\tau)$ is the Eisenstein series $E_4(\tau)$ of weight $4$. This function has a unique simple zero at $\tau = \omega = e^{2\pi i/3}$ on $\SL_2(\Z) \backslash \bbH$ and has no zero or pole at the cusp $i\infty$. Using this fact, we obtain
\[
	(j_n, E_4)_\mathrm{BKO} = \frac{1}{3} j_n(\omega) = -\mathrm{Coeff}_{q^n} \bigg(240q - 53280q^2 + 12288960q^3 + \cdots \bigg),
\]
which, in particular, yields $j_1(\omega) = -720$. 

The motivating question of this study is as follows. Noting from the definition that $j_n = j_1|_0T(n)$, the theorem above provides an explicit formula for $(j_1|_0T(n), f)_\mathrm{BKO}$. In this context, if we interpret $(j_n, f)_\mathrm{BKO}$ as the pairing between a weight $0$ modular form and a meromorphic modular form, does there exist a suitable Hecke operator $T^*(n)$ acting on $f$ such that $(j_1|_0 T(n), f)_\mathrm{BKO} = (j_1, f|T^*(n))_\mathrm{BKO}$?

In the example above, if we define
\begin{align*}
	(E_4|T^*(2))(\tau) &= E_4(2\tau) E_4 \left(\frac{\tau}{2}\right) E_4 \left(\frac{\tau+1}{2}\right) = E_{12}(\tau) - \frac{36882000}{691} \Delta(\tau),\\
	(E_4|T^*(3))(\tau) &= E_4(3\tau) E_4 \left(\frac{\tau}{3}\right) E_4 \left(\frac{\tau+1}{3}\right) E_4 \left(\frac{\tau+2}{3}\right) = E_{16}(\tau) + \frac{44449152000}{3617} E_4(\tau) \Delta(\tau),
\end{align*}
then a direct calculation shows that
\begin{align*}
	\frac{\Theta (E_4|T^*(2))}{E_4|T^*(2)} = -53280q + \cdots, \qquad \frac{\Theta (E_4|T^*(3))}{E_4|T^*(3)} = 12288960 q + \cdots,
\end{align*}
which imply the desired identity $(j_1|_0 T(n), E_4)_\mathrm{BKO} = (j_1, E_4|T^*(n))_\mathrm{BKO}$ for $n=2,3$. These examples strongly suggest that the ``multiplicative Hecke operators" introduced by Guerzhoy~\cite{Guerzhoy2006} (or, as Guerzhoy himself noted, originally stemming from the work of Gritsenko--Nikulin~\cite{GritsenkoNikulin1996}) provides an answer to this question.

In this article, we divide the above observations into two fundamental theorems and provide more general framework. To clarify this, we briefly introduce several concepts defined in \cref{sec-Hecke-operator}. For a positive integer $N$, let $X_0(N) \coloneqq \Gamma_0(N) \backslash \bbH^*$ be the compact Riemann surface, where $\bbH^* \coloneqq \bbH \cup \Q \cup \{i\infty\}$. The $\Q$-vector space of divisors on $X_0(N)$ is given by
\[
	\Div(X_0(N))_\Q = \left\{ \sum_{[z] \in X_0(N)} n_z [z] : n_z \in \Q, n_z = 0 \text{ for almost all $[z]$}\right\}.
\]
Let $\merM_*(\Gamma_0(N))$ denote the space of integer-weight meromorphic modular forms on $\Gamma_0(N)$. The first main object is the \emph{divisor map} $\div: \merM_*(\Gamma_0(N)) \setminus \{0\} \to \Div(X_0(N))_\Q$ defined by
\[
	\div(f) \coloneqq \sum_{[z] \in X_0(N)} \ord_{\Gamma_0(N)}(f; [z]) [z],
\]
where $\ord_{\Gamma_0(N)}(f; [z]) \in \Q$ is defined in \cref{sec:divisor-map}. The space $\Div(X_0(N))_\Q$ is known to admit an action of the Hecke operator, given by $D \mapsto T(n)D$, as described in Diamond--Shurman's book~\cite[Chapter 5]{DiamondShurman2005}. On the other hand, for $\merM_*(\Gamma_0(N))$, the action of the multiplicative Hecke operator $f|_* T(n)$ has been studied by Guerzhoy~\cite{Guerzhoy2006}. The first theorem asserts that the divisor map is equivariant with respect to the action of the Hecke operators.

\begin{theorem}[\cref{thm:Hecke-equivariance}]\label{main1}
	For any positive integer $n$ coprime to $N$ and a non-zero $f \in \merM_*(\Gamma_0(N))$, we have
	\[
		T(n) \div(f) = \div(f|_* T(n)).
	\]
\end{theorem}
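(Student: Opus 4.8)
The plan is to reduce the identity $T(n)\div(f)=\div(f|_*T(n))$ to the known divisor formula for Hecke operators on weakly holomorphic modular forms together with the explicit coset description of the multiplicative Hecke operator. The multiplicative Hecke operator $f|_*T(n)$ is, up to a normalizing scalar, a product $\prod_{\gamma} (f\mid_k \gamma)$ over a set of coset representatives $\gamma$ for $\Gamma_0(N)\backslash \Delta_n$, where $\Delta_n$ is the usual Hecke double coset of determinant-$n$ integral matrices. Since $\div$ turns products into sums, $\div(f|_*T(n)) = \sum_\gamma \div(f\mid_k\gamma)$, and each summand is a pullback of $\div(f)$ under the modular correspondence attached to $\gamma$. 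The right-hand side is thus $\sum_\gamma \gamma^*\div(f)$, which is exactly the definition of $T(n)\div(f)$ on $\Div(X_0(N))_\Q$ as given in Diamond--Shurman. So at the level of divisors \emph{in the interior} $\bbH$ the statement should be almost tautological once the coset bookkeeping is set up carefully.

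\medskip

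The steps I would carry out, in order, are: (1) recall from \cref{sec-Hecke-operator} the precise definition of $f|_*T(n)$ as $\prod_\gamma (f\mid_{k}\gamma)$ (with whatever weight/character normalization is used) over representatives of $\Gamma_0(N)\backslash\Delta_n$, and recall that $\gcd(n,N)=1$ guarantees these representatives can be taken upper-triangular $\smat{a&b\\0&d}$ with $ad=n$, $0\le b<d$; (2) verify the multiplicativity $\div(fg)=\div(f)+\div(g)$ of the divisor map on $\merM_*(\Gamma_0(N))\setminus\{0\}$, so that $\div(f|_*T(n))=\sum_\gamma\div(f\mid_k\gamma)$; (3) for a single $\gamma$ with $\det\gamma=n$, compute $\ord_{\Gamma_0(N)}(f\mid_k\gamma;[z])$ in terms of $\ord_{\Gamma_0(N)}(f;[\gamma z])$, being careful about the stabilizer orders and about the elliptic points, so that $\div(f\mid_k\gamma)$ is identified with the pullback $\gamma^*\div(f)$ under the correspondence; (4) sum over $\gamma$ and match the result with the Hecke action on divisors, $T(n)D=\sum_\gamma\gamma^*D$, as defined in \cite[Chapter 5]{DiamondShurman2005}.

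\medskip

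\textbf{The main obstacle} I expect is step (3)--(4): matching the geometric Hecke action on $\Div(X_0(N))_\Q$ (which is defined through the two degeneracy maps $X_0(Nn)\rightrightarrows X_0(N)$, or equivalently via the correspondence on the moduli space) with the naive ``sum over upper-triangular cosets of $f\mid_k\gamma$'' picture, and making sure all the weighting factors $1/|\PSL_2(\Z)_z|$ (and their $\Gamma_0(N)$ analogues), the behavior at the cusps, and the ramification of the degeneracy maps at elliptic points are consistent. In particular one must check that $\ord_{\Gamma_0(N)}(f;[z])$ is defined with exactly the normalization (a rational number accounting for the order of the stabilizer) that makes the pullback formula clean, and that contributions at the cusps $\Q\cup\{i\infty\}$ — where $f$ may have a pole or zero — are tracked correctly, since the multiplicative Hecke operator mixes the expansions at different cusps of $\Gamma_0(N)$. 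A secondary, more technical point is confirming that the normalizing scalar in the definition of $f|_*T(n)$ (a constant independent of $\tau$) does not contribute to the divisor, which is immediate since constants have trivial divisor, but should be noted explicitly.

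\medskip

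An alternative route, if the direct coset computation becomes unwieldy at the cusps, is to first prove the identity on the interior (where it is essentially the classical fact that pullback of divisors under Hecke correspondences is additive and commutes with products) and then handle the cusps separately using the known transformation of the constant terms of $f\mid_k\gamma$ at the various cusps of $\Gamma_0(N)$; by the degree/valence considerations both sides have the same total degree, so matching the interior part and the cusp part suffices. I would present the proof along the first (uniform) route and fall back to this split only if needed.
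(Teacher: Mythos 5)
Your overall strategy (multiplicativity of $\div$ plus coset bookkeeping) points in the right direction, but the step you describe as ``almost tautological'' is exactly where the content of the theorem lies, and as written it contains a genuine gap. The paper's definition of the Hecke action on divisors is $T(n)\sum n_z[z]=\sum n_z\sum_i[\alpha_i z]$, i.e.\ a \emph{pushforward} along the coset representatives $\alpha_i$, whereas $\div(f|_k\alpha_i)$ is supported on $\alpha_i^{-1}$ of the support of $\div(f)$, i.e.\ a pullback. These do \emph{not} match term by term: in the paper's own example with $f=j_1-j_1(i)$ and $n=2$, the representative $\beta_\infty=\smat{2&0\\0&1}$ contributes $[2i]$ to $T(2)\div(f)$, while the corresponding factor $f(2\tau)$ of $f|_*T(2)$ contributes nothing at $[2i]$. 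The two totals agree only because $\beta\mapsto n\beta^{-1}$ permutes the coset representatives of $\Gamma_0(N)\backslash T(n)$ up to left $\Gamma_0(N)$-translation --- a fact that holds precisely when $(n,N)=1$ and fails otherwise (the paper's level-$2$ example shows the statement is false for $p\mid N$). Your proposal never invokes this symmetry, so your ``sum over $\gamma$ and match'' step, as stated, either asserts a false term-by-term identity or leaves the actual matching unjustified; note also that your hypothesis $(n,N)=1$ is never actually used in your argument, which is a warning sign.

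A second, more technical gap: $f|_k\gamma$ for a single $\gamma$ of determinant $n>1$ is not modular on $\Gamma_0(N)$, so $\ord_{\Gamma_0(N)}(f|_k\gamma;[z])$ --- with its stabilizer weights at elliptic points and its width-normalized order at cusps --- is not defined; only the full product $f|_*T(n)$ descends to $X_0(N)$. You correctly flag this as the main obstacle, but flagging it is not resolving it. The paper handles it by first reducing to the generators $T(p)$ with $p\nmid N$ via the Hecke algebra structure, then passing to auxiliary normal congruence subgroups ($\Gamma(pN)$ and a common $\Gamma(M)\subset\beta^{-1}\Gamma(pN)\beta$), computing a single auxiliary divisor in two ways using an unfolding lemma for the coverings $X(G)\to X(\Gamma)$ together with order-comparison lemmas (\cref{lem:unfolding}, \cref{lem:ord-trans}, \cref{lem:ord-ord}), and only then descending back to $X_0(N)$. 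To turn your outline into a proof you would need both the representative-set symmetry $\beta\mapsto n\beta^{-1}$ and some version of this descent machinery.
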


Although this may seem like a very basic statement, given the limited research on multiplicative Hecke operators, it is likely that this claim has not been explicitly stated before. Furthermore, as explained after \cref{thm:Hecke-equivariance}, contrary to its appearance, this statement is not immediately obvious.

Next, for any function $F: X_0(N) \to \C$, we define a map $\calD_F: \Div(X_0(N))_\Q \to \C$ by
\[
	\calD_F(D) \coloneqq \sum_{[z] \in X_0(N)} n_z F(z)
\]
for $D = \sum_{[z] \in X_0(N)} n_z [z] \in \Div(X_0(N))_\Q$. The precise definition of the action of the Hecke operator on the map $F: X_0(N) \to \C$ is given at the end of \cref{sec:Hecke-modualr-cla}, but it can be regarded as the classical Hecke operator of weight $0$. Similarly, as explained at the beginning of \cref{sec:Rohrlich}, the following arises naturally from the definitions.

\begin{theorem}[\cref{prop:divisor-sums}]\label{main2}
	Let $F: X_0(N) \to \C$. For any positive integer $n$ and $D \in \Div(X_0(N))_\Q$, we have
	\[
		\calD_{F} (T(n) D) = \calD_{F|_0T(n)} (D).
	\]
\end{theorem}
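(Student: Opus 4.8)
The plan is to reduce both sides to a single point of $X_0(N)$ and then to observe that, once the two Hecke operators are written out through coset representatives, the two sides are literally the same finite sum. Indeed, $\calD_F$ is $\Q$-linear in its divisor argument, the operator $D\mapsto T(n)D$ is $\Q$-linear on $\Div(X_0(N))_\Q$, and $F\mapsto\calD_F$ depends linearly on $F$; hence it suffices to establish
\[
	\calD_{F}\bigl(T(n)[z]\bigr) = \bigl(F|_0T(n)\bigr)(z)\qquad\text{for each }[z]\in X_0(N),
\]
for the right-hand side equals $\calD_{F|_0T(n)}([z])$, and both sides then extend $\Q$-linearly over an arbitrary $D=\sum_{[z]}n_z[z]$.

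For this I would unwind the explicit descriptions fixed in \cref{sec:Hecke-modualr-cla}. Writing the double coset attached to $T(n)$ as $\Gamma_0(N)\alpha\Gamma_0(N)=\bigsqcup_{j=1}^{d}\Gamma_0(N)\alpha_j$ (one may take $\alpha=\smat{1&0\\0&n}$ when $\gcd(n,N)=1$, and the appropriate representatives in general), the Diamond--Shurman Hecke operator on divisors acts by $T(n)[z]=\sum_{j=1}^{d}[\alpha_j z]$ in $\Div(X_0(N))_\Q$ --- an identity valid at every point, the multiplicities being produced exactly by the coincidences among the $\alpha_j z$ modulo $\Gamma_0(N)$ --- while the weight-$0$ Hecke operator on functions is, with the matching normalization, $(F|_0T(n))(z)=\sum_{j=1}^{d}F(\alpha_j z)$, the weight-$0$ specialization of the classical operator. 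Neither expression depends on the chosen representatives: replacing $\alpha_j$ by $\gamma\alpha_j$ with $\gamma\in\Gamma_0(N)$ changes neither $[\alpha_j z]$ nor $F(\alpha_j z)$, and right translation by $\Gamma_0(N)$ merely permutes the cosets. With these in hand the claim follows in one line: $\calD_F(T(n)[z])=\calD_F\bigl(\sum_j[\alpha_j z]\bigr)=\sum_j F(\alpha_j z)=(F|_0T(n))(z)$.

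The step I expect to require the most care is not this computation but the justification that the Diamond--Shurman operator on $\Div(X_0(N))_\Q$ is indeed given by $[z]\mapsto\sum_j[\alpha_j z]$ at \emph{every} point --- in particular at the elliptic points and the cusps, where one must confirm that the push--pull along the Hecke correspondence introduces no ramification weight beyond the coincidences already visible among the $\alpha_j z$, and that the normalizations of the divisor-side and function-side operators are the same (this can be fixed on one generic point, or read off the $q$-expansion as in the definition of $j_n=j_1|_0T(n)$). Once that bookkeeping is settled no real obstacle remains, and the statement is, as the introduction indicates, essentially a consequence of the definitions; the case $\gcd(n,N)>1$ is handled in exactly the same way, via the degeneracy-map description of $T(n)$ in \cite[Chapter~5]{DiamondShurman2005}.
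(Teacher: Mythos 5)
Your proposal is correct and takes essentially the same route as the paper: the paper's proof is precisely the one-line termwise comparison $\calD_F(T(p)D)=\sum_{[z]}n_z\sum_{\beta}F(\beta z)=\calD_{F|_0T(p)}(D)$ over coset representatives, done for $T(p)$ and extended by the ring-homomorphism structure, and the extra worry about ramification weights at elliptic points and cusps is moot because the paper \emph{defines} the action on divisors by $u[z]=\sum_i[\alpha_i z]$ with no such weights. One minor imprecision: for non-squarefree $n$ the element $T(n)$ is a sum of several double cosets $T(a,d)$ rather than the single double coset $\Gamma_0(N)\smat{1&0\\0&n}\Gamma_0(N)$, but this does not affect your argument since both actions are additive over double cosets.
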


As a generalization of $(j_n, f)_\mathrm{BKO}$, for a map $F: X_0(N) \to \C$ and a meromorphic modular form $f \in \merM_*(\Gamma_0(N))$, we consider the sum so-called the \emph{Rohrlich-type divisor sum}, defined by
\[
	\calD_F(\div(f)) = \sum_{[z] \in X_0(N)} \ord_{\Gamma_0(N)}(f; [z]) F(z).
\]
For $N=1$, if we take $F$ as $F(\tau) = j_n(\tau)$ for $\tau \in \bbH$ and $F(i\infty) = 24\sigma_1(n)$, then we get 
\begin{align}\label{eq:Rohrlich-BKO}
	\calD_F(\div(f)) = (j_n, f)_\mathrm{BKO}.
\end{align}
By combining these two theorems, we obtain the following.

\begin{corollary}\label{cor:BKO}
	Under the notation above, for any positive integer $n$ coprime to $N$, we have
	\[
		\calD_{F|_0T(n)}(\div(f)) = \calD_F(\div(f|_*T(n)).
	\]
	In particular, it implies that $(j_1|_0 T(n), f)_\mathrm{BKO} = (j_1, f|_* T(n))_\mathrm{BKO}$ for any $n$.
\end{corollary}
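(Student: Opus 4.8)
The plan is to deduce \cref{cor:BKO} directly by chaining the two main theorems, with a small bit of care about which hypotheses are needed where. First I would recall that \cref{main2} applies to an \emph{arbitrary} function $F \colon X_0(N) \to \C$ and \emph{any} positive integer $n$, while \cref{main1} requires $n$ to be coprime to $N$ and gives $T(n)\div(f) = \div(f|_*T(n))$. So, fixing $n$ coprime to $N$ and a non-zero $f \in \merM_*(\Gamma_0(N))$, the computation is
\begin{align*}
	\calD_{F|_0T(n)}(\div(f)) = \calD_F\bigl(T(n)\div(f)\bigr) = \calD_F\bigl(\div(f|_*T(n))\bigr),
\end{align*}
where the first equality is \cref{main2} applied to the divisor $D = \div(f) \in \Div(X_0(N))_\Q$, and the second is \cref{main1} together with the fact that $\calD_F$ is a well-defined (indeed $\Q$-linear) map on $\Div(X_0(N))_\Q$, hence respects the equality of divisors. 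This is the entire argument for the displayed identity; it is genuinely a two-line corollary once the two theorems are in hand.

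Next I would address the ``in particular'' clause. Here $N = 1$, and we choose $F$ as in the excerpt: $F(\tau) = j_n(\tau)$ for $\tau \in \bbH$ and $F(i\infty) = 24\sigma_1(n)$ — wait, that is not quite what we want, since the identity we are proving involves $j_1$ on one side and a Hecke operator $T(n)$ on the other. The correct specialization is to take $F$ corresponding to $j_1$, i.e. $F(\tau) = j_1(\tau)$ on $\bbH$ and $F(i\infty) = 24\sigma_1(1) = 24$, so that by \eqref{eq:Rohrlich-BKO} we have $\calD_F(\div(g)) = (j_1, g)_\mathrm{BKO}$ for every meromorphic modular form $g$. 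Applying the displayed identity with this $F$ gives
\begin{align*}
	\calD_{F|_0T(n)}(\div(f)) = \calD_F\bigl(\div(f|_*T(n))\bigr) = (j_1, f|_*T(n))_\mathrm{BKO}.
\end{align*}
It then remains to identify the left-hand side with $(j_1|_0T(n), f)_\mathrm{BKO}$. For this I would invoke the compatibility, recorded in \cref{sec:Hecke-modualr-cla}, between the weight-$0$ Hecke action on functions $X_0(1) \to \C$ and the classical Hecke operator $|_0T(n)$ on weakly holomorphic modular forms: the function attached to $j_1|_0T(n) = j_n$ is precisely $F|_0T(n)$ up to the contribution at the cusp. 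Concretely, $j_1|_0T(n) = j_n$ has Fourier expansion $q^{-n} + 24\sigma_1(n) + O(q)$, and the prescription $\tau \mapsto j_n(\tau)$ on $\bbH$ together with the value $24\sigma_1(n)$ at $i\infty$ is exactly the function $F|_0T(n)$; hence $\calD_{F|_0T(n)}(\div(f)) = (j_n, f)_\mathrm{BKO} = (j_1|_0T(n), f)_\mathrm{BKO}$, which closes the argument.

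The main thing to get right — the only place a subtlety could hide — is this last identification of $F|_0T(n)$ with the BKO pairing against $j_n$, i.e. checking that the weight-$0$ Hecke action on the \emph{function} $F$ (defined on $X_0(1)$, including its value at the cusp $i\infty$) matches the classical Hecke action on the \emph{modular form} $j_1$, including the correct constant $24\sigma_1(n)$ at the cusp. This hinges entirely on how the Hecke action on $F \colon X_0(N) \to \C$ was defined at the end of \cref{sec:Hecke-modualr-cla}; assuming that definition is arranged (as the excerpt indicates) to be ``the classical Hecke operator of weight $0$,'' the matching is automatic and the corollary follows with no further work. Everything else is a purely formal substitution into \cref{main1} and \cref{main2}, using only the $\Q$-linearity of $\calD_F$ and the coprimality hypothesis $(n,N)=1$ inherited from \cref{main1}.
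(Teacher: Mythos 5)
Your proposal is correct and follows exactly the paper's route: chain \cref{main2} and \cref{main1} to get $\calD_{F|_0T(n)}(\div(f)) = \calD_F(T(n)\div(f)) = \calD_F(\div(f|_*T(n)))$, then specialize via \eqref{eq:Rohrlich-BKO} to recover the Bruinier--Kohnen--Ono pairing. Your extra check that $F|_0T(n)$ (with $F = j_1$, $F(i\infty)=24$) agrees with $j_n$ including the value $24\sigma_1(n)$ at the cusp is a detail the paper leaves implicit, but it is the same argument.
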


\begin{proof}
	By two theorems above, we have
	\begin{align*}
		\calD_{F|_0T(n)}(\div(f)) &\overset{\text{\cref{main2}}}{=} \calD_{F}(T(n) \div(f))\\
			&\overset{\text{\cref{main1}}}{=} \calD_{F}(\div(f|_*T(n))).
	\end{align*}
	The expression in~\eqref{eq:Rohrlich-BKO} thus leads to the desired result for $(j_n, f)_\mathrm{BKO}$, which was our original motivation.
\end{proof}

This result is closely related to the self-adjointness of the Hecke operator in the following sense. In our previous work~\cite{JKKM2024}, we studied the Rohrlich-type divisor sums $\calD_F(\div(f))$ associated with a smooth modular form $F(\tau)$ of weight $0$ on $\Gamma_0(N)$. If $F$ has singularities at cusps $\rho$, we assign appropriate values to $F(\rho) \in \C$. We then showed that, under certain analytic conditions related to the convergence of the inner product, this can be expressed in terms of the regularized Petersson inner product. For simplicity, if we assume that the terms $\mathrm{ord}_{\Gamma_0(N)} (f; [\rho])$ and $C_{1,\rho}(f,g), C_{2,\rho}(f,g)$ in the statement of \cite[Theorem 1.1]{JKKM2024} vanish for all cusps $\rho$, (as in the cases discussed in \cref{ex:self-adjoint} and \cref{cor:DIT-KM} later), we obtained the formula
\begin{align}\label{eq:Rohrlich-Petersson}
	\calD_F(\div(f)) = -\frac{1}{2\pi} \langle \Delta_0 F, \log(v^{k/2}|f|)\rangle^{\mathrm{reg}},
\end{align}
where $k$ is the weight of $f$ and $\Delta_0 \coloneqq -v^2(\frac{\partial^2}{\partial u^2} + \frac{\partial^2}{\partial v^2})$ is the hyperbolic Laplacian. In this situation, a direct calculation shows that \cref{cor:BKO} is equivalent to the identity
\[
	\bigg\langle (\Delta_0 F)|_0T(n), \log(v^{k/2} |f|) \bigg\rangle^\mathrm{reg} = \bigg\langle \Delta_0 F, (\log(v^{k/2} |f|)) |_0 T(n) \bigg\rangle^\mathrm{reg},
\]
which expresses the self-adjointness of the Hecke operator $T(n)$.

This article is organized as follows. In \cref{sec-Hecke-operator}, we review the concept of the Hecke algebra following Shimura's book~\cite{Shimura1971} and interpret the three types of Hecke operators introduced in this section as representations of the Hecke algebra. \cref{sec:div-Hecke} is devoted to the proof of \cref{main1}. Finally, in \cref{sec:Rohrlich}, continuing our previous work~\cite{JKKM2024}, we study the Rohrlich-type divisor sums $\calR_{N,m}(s;f)$ defined by the Niebur--Poincar\'{e} series and derive several consequences of the present results. In particular, by proving the $p$-plication formulas (Equations \eqref{eq:p-pricate-non} and \eqref{eq:p-prication}) for the Niebur--Poincar\'{e} series and polyharmonic Maass forms, we explicitly compute the identity given in \cref{cor:BKO} for $\calR_{N,m}(s;f)$ in \cref{cor:L-Hecke-equiv}. As an application, we provide an algebraic shortcut (\cref{cor:DIT-KM}) for an alternative proof of the result by Duke--Imamo\={g}lu--T\'{o}th~\cite{DukeImamogluToth2011}, which we previously established in~\cite[Section 4.6]{JKKM2024}. This new approach simplifies the proof by eliminating the analytic discussion related to the regularized Petersson inner product.

\section{Hecke operators}\label{sec-Hecke-operator}

\subsection{Hecke algebras}

In this section, we provide an overview of a unified approach to understanding Hecke operators on various spaces, viewing them as representations of Hecke algebras. Our exposition follows Shimura's book~\cite[Chapter 3]{Shimura1971}.

\begin{definition}
For a positive integer $N$, we define the set
\[
	\Delta_N \coloneqq \left\{\gamma = \pmat{a & b \\ c & d} \in \mathrm{M}_2(\Z) : \det(\gamma) > 0, (a,N) = 1, N|c \right\},
\]
which coincides with the set $\Delta'$ in Shimura~\cite[(3.3.3)]{Shimura1971} when $\mathfrak{h} = (\Z/N\Z)^\times$ and $t=1$. We then define
\[
	R_0(N) \coloneqq \Z[\Gamma_0(N) \backslash \Delta_N/\Gamma_0(N)],
\]
that is, the free $\Z$-module generated by the double cosets $\Gamma_0(N) \alpha \Gamma_0(N)$ for $\alpha \in \Delta_N$. 
\end{definition}

We introduce a multiplication on $R_0(N)$ that endows it with the ring structure as follows. Given $\alpha, \beta \in \Delta_N$, consider disjoint coset decompositions
\begin{align}\label{eq:coset-decomp-ex}
	\Gamma_0(N) \alpha \Gamma_0(N) = \bigsqcup_{i \in I} \Gamma_0(N) \alpha_i, \quad \Gamma_0(N) \beta \Gamma_0(N) = \bigsqcup_{j \in J} \Gamma_0(N) \beta_j.
\end{align}
Here, the index sets $I$ and $J$ are finite, (see~Diamond--Shurman~\cite[Lemmas 5.1.1 and 5.1.2]{DiamondShurman2005}). For these two double cosets, we obtain a (not necessarily disjoint) coset decomposition of the set $\Gamma_0(N) \alpha \Gamma_0(N) \cdot \Gamma_0(N) \beta \Gamma_0(N) \coloneqq \{a b : a \in \Gamma_0(N) \alpha \Gamma_0(N), b \in \Gamma_0(N) \beta \Gamma_0(N)\}$ as follows:
\begin{align*}
	\Gamma_0(N) \alpha \Gamma_0(N) \cdot \Gamma_0(N) \beta \Gamma_0(N) &= \Gamma_0(N) \alpha \Gamma_0(N) \beta \Gamma_0(N)\\
		&= \bigcup_{(i, j) \in I \times J} \Gamma_0(N) \alpha_i \beta_j.
\end{align*}
As explained in \cite[Section 3.1]{Shimura1971}, the right-hand side can be written as a finite union of double cosets of the form $\Gamma_0(N) \xi \Gamma_0(N)$. 

\begin{example}\label{ex:level1}
	For $N = 1$, let $\Gamma = \SL_2(\Z)$. For $\alpha = \beta = \smat{1 & 0 \\ 0 & 2} \in \Delta_1$, the following coset decomposition can be easily verified:
	\[
		\Gamma \pmat{1 & 0 \\ 0 & 2} \Gamma = \bigsqcup_{i \in I} \Gamma \beta_i,
	\]
	where we set $I = \{\infty, 0, 1\}$ and $\beta_\infty = \smat{2 & 0 \\ 0 & 1}, \beta_0 = \smat{1 & 0 \\ 0 & 2}, \beta_1 = \smat{1 & 1 \\ 0 & 2}$. Then the product is given by
	\[
		\Gamma \alpha \Gamma \cdot \Gamma \beta \Gamma = \bigcup_{(i,j) \in I \times I} \Gamma \beta_i \beta_j.
	\]
	For $S = \{(\infty, \infty), (1,\infty), (0,0), (1,0), (0,1), (1,1)\} \subset I \times I$, a straightforward calculation shows that
	\begin{align*}
		\bigsqcup_{(i,j) \in S} \Gamma \beta_i \beta_j = \Gamma \pmat{1 & 0 \\ 0 & 4} \Gamma, \qquad \Gamma \beta_\infty \beta_0 = \Gamma \beta_\infty \beta_1 = \Gamma \beta_0 \beta_\infty = \Gamma \pmat{2 & 0 \\ 0 & 2} \Gamma.
	\end{align*}
	Thus, in this case, $\Gamma \alpha \Gamma \cdot \Gamma \beta \Gamma$ is the union of the two double cosets $\Gamma \smat{1 & 0 \\ 0 & 4} \Gamma$ (with multiplicity $1$) and $\Gamma \smat{2 & 0 \\ 0 & 2} \Gamma$ (with multiplicity $3$).
\end{example}

We then define the product of $u = \Gamma_0(N) \alpha \Gamma_0(N)$ and $v = \Gamma_0(N) \beta \Gamma_0(N)$ in $R_0(N)$ by
\[
	u \cdot v = \sum_w m(u \cdot v; w) w,
\]
where the sum runs over all double cosets $w = \Gamma_0(N) \xi \Gamma_0(N)$ contained in $\Gamma_0(N) \alpha \Gamma_0(N) \beta \Gamma_0(N)$. The coefficients are given by the multiplicity
\[
	m(u \cdot v; w) \coloneqq  \#\{(i,j) \in I \times J : \Gamma_0(N) \alpha_i \beta_j = \Gamma_0(N) \xi\}.
\]
As shown in~\cite[Section 3.1]{Shimura1971}, the definition of $m(u \cdot v; w)$ is independent of the choice of representatives $\{\alpha_i\}, \{\beta_j\}$, and $\xi$. It is also known that this multiplication satisfies the associative law, which makes $R_0(N)$ a ring. This ring is called the \emph{Hecke algebra}.

\begin{definition}
	For two positive integers $a$ and $d$ such that $a|d$ and $(a,N) = 1$, we define
	\[
		T(a,d) \coloneqq \Gamma_0(N) \pmat{a & 0 \\ 0 & d} \Gamma_0(N) \in R_0(N).
	\]
	Moreover, for any positive integer $n$, we define $T(n) \in R_0(N)$ as the sum of all $\Gamma_0(N) \alpha \Gamma_0(N)$ with $\alpha \in \Delta_N$ and $\det(\alpha) = n$. 
\end{definition}

This $T(n)$ can be explicitly written as
\[
	T(n) = \sum_{\substack{ad = n \\ a \mid d, (a,N) = 1}} T(a,d),
\]
(see~\cite[Lemma 6.5.6]{CohenStromberg2017} for instance). This expression implies that for any prime $p$, we have $T(p) = T(1,p)$. The example computed in \cref{ex:level1} can be written as
\[
	T(2)^2 = T(1,4) + 3 T(2,2) = T(4) + 2T(2,2).
\]
More generally, the following serves as the foundation of the Hecke algebra.

\begin{proposition}[{Shimura~\cite[Theorems 3.34 and (3.3.6)]{Shimura1971}}]\label{prop:Hecke-property}
	The elements $T(n), T(a,d) \in R_0(N)$ satisfy the following:
	\begin{enumerate}
		\item For any integers $m, n$, we have
		\[
			T(m) T(n) = \sum_{\substack{d \mid (m,n) \\ (d,N) = 1}} d T(d,d) T \left(\frac{mn}{d^2}\right).
		\]
		\item The Hecke algebra $R_0(N)$ is a polynomial ring over $\Z$ generated by $T(p)$ for all primes $p$ and $T(q,q)$ for all primes $q \nmid N$, that is,
		\[
			R_0(N) = \Z[T(p), T(q,q) : p,q: \text{primes}, q \nmid N].
		\]
	\end{enumerate}
	In particular, $R_0(N)$ is a commutative ring with the identity $\Gamma \smat{1 & 0 \\ 0 & 1} \Gamma$.
\end{proposition}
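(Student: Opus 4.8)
The approach is the classical coset-counting argument (Shimura~\cite[Chapter 3]{Shimura1971}), organised most transparently through the correspondence between right cosets $\Gamma_0(N)\alpha$ ($\alpha\in\Delta_N$) and sublattices carrying the appropriate $\Gamma_0(N)$-level structure. First I would record the coset geometry: by the elementary divisor theorem adapted to $\Gamma_0(N)$, each double coset in $R_0(N)$ equals exactly one $T(a,d)$ with $a\mid d$, $(a,N)=1$; each right coset $\Gamma_0(N)\alpha$ has a unique representative $\smat{a&b\\0&d}$ with $ad=\det\alpha$, $(a,N)=1$, $0\le b<d$; hence $T(n)=\sum_{ad=n,\,(a,N)=1}\sum_{0\le b<d}\Gamma_0(N)\smat{a&b\\0&d}$. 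The homothety $T(d,d)=\Gamma_0(N)(dI)$ is a single coset, lies in $\Delta_N$ exactly when $(d,N)=1$, is central because $dI$ is central in $\GL_2$, and satisfies $T(d,d)^a=T(d^a,d^a)$; right-multiplication by it shifts all elementary divisors by $d$. This already explains why the side condition $(d,N)=1$ appears in~(1).

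For~(1) I would first treat coprime indices: $T(m)T(n)=T(mn)$ when $(m,n)=1$, since in the lattice picture $(T(m)T(n))(L)=\sum_{L'}c(L')L'$, where $c(L')$ counts chains $L\supseteq L''\supseteq L'$ with $[L:L'']=n$ and $[L'':L']=m$, and a group of order $mn$ with $(m,n)=1$ has a unique subgroup of order $n$. Then comes the prime-power recursion $T(p)T(p^r)=T(p^{r+1})+p\,T(p,p)\,T(p^{r-1})$ for $p\nmid N$, and $T(p)T(p^r)=T(p^{r+1})$ for $p\mid N$: here $c(L')$ equals the number of order-$p$ subgroups of $L/L'$, which is $1$ when $L/L'$ is cyclic (equivalently $L'\not\subseteq pL$) and $p+1$ otherwise (equivalently $L'\subseteq pL$); since $T(p,p)T(p^{r-1})(L)=\sum_{L'\subseteq pL,\,[L:L']=p^{r+1}}L'$, the two sides match term by term. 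When $p\mid N$ the representative $\smat{p&0\\0&1}$ and the homothety $pI$ both fall outside $\Delta_N$, which simultaneously removes the extra subgroup count and the $p\,T(p,p)$-term. The general identity~(1) then follows by multiplicativity over the prime factorisation together with induction on the exponents, matching coefficients via $\bigl(\sum_{r\ge0}T(p^r)X^r\bigr)\bigl(1-T(p)X+p\,T(p,p)X^2\bigr)=1$.

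For~(2), inverting this power-series identity gives $T(p^r)\in\Z[T(p),T(p,p)]$ for $p\nmid N$ (and $T(p^r)=T(p)^r$ for $p\mid N$); a general $T(p^a,p^b)$ with $a\le b$ is recovered from $T(p,p)^a\,T(p^{b-a})=T(p^a,p^b)+T(p^{a+1},p^{b-1})+\dots$ by downward induction, and multiplicativity from~(1) then shows that the $T(p)$ together with the $T(q,q)$ ($q\nmid N$) generate $R_0(N)$. To upgrade ``generate'' to ``freely generate'', i.e.\ that $R_0(N)$ is genuinely the stated polynomial ring, I would observe that the monomials $\prod_pT(p)^{a_p}\prod_{q\nmid N}T(q,q)^{b_q}$ are carried to the double-coset $\Z$-basis by a unitriangular change of basis (each monomial has a distinguished leading double coset occurring with coefficient $1$); alternatively one exhibits a faithful $R_0(N)$-module, such as a suitable space of modular forms, and reads off algebraic independence there. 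Commutativity and the identity are then immediate: the generators pairwise commute, because $T(p)T(q)=T(pq)=T(q)T(p)$ by the $m\leftrightarrow n$ symmetry of~(1) and the $T(q,q)$ are central, so the polynomial ring $R_0(N)$ is commutative; and the class $\Gamma_0(N)\smat{1&0\\0&1}\Gamma_0(N)$ acts as $1$ directly from the multiplication rule.

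The step I expect to be the main obstacle is the level-$N$ bookkeeping that runs through the whole combinatorial argument: one must check that the lattice/coset dictionary is compatible with products once the $\Gamma_0(N)$-level structure is carried along, and that it is precisely the conditions $(a,N)=1$ and $N\mid c$ that force the $(d,N)=1$ restriction in~(1) and the degeneration of the prime-power recursion at primes dividing $N$. The secondary delicate point is the algebraic-independence half of~(2), which is the one place the coset description does not settle on its own and needs either the unitriangularity observation or an auxiliary faithful representation.
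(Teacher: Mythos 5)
The paper offers no proof of this proposition: it is quoted verbatim from Shimura~\cite[Theorem 3.34 and (3.3.6)]{Shimura1971} and used as a black box, so there is nothing internal to compare your argument against. Your sketch is a correct outline of the standard proof, packaged in the lattice-with-level-structure language (closer to Diamond--Shurman's treatment of $\Gamma_0(N)$ than to Shimura's purely formal double-coset manipulation, though the combinatorics is the same): reduction to coprime indices plus the local recursion $T(p)T(p^r)=T(p^{r+1})+p\,T(p,p)T(p^{r-1})$ for $p\nmid N$ (degenerating to $T(p)T(p^r)=T(p^{r+1})$ for $p\mid N$), the subgroup count $1$ versus $p+1$ according to whether $L/L'$ is cyclic, and the identification of the non-cyclic locus with the image of $T(p,p)T(p^{r-1})$ are all right, and they reproduce the paper's sample computation $T(2)^2=T(4)+2T(2,2)$. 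Two places deserve tightening. First, for the lattice computation to compute the structure constants of $R_0(N)$ on the nose (rather than their images in some quotient), you should set it up so that right cosets $\Gamma_0(N)\backslash\Delta_N$ biject with the enhanced sublattices you are counting; then the multiplicity $m(u\cdot v;w)=\#\{(i,j):\Gamma_0(N)\alpha_i\beta_j=\Gamma_0(N)\xi\}$ is literally your chain count and no separate faithfulness argument is needed. Second, your fallback for algebraic independence in~(2) --- ``exhibit a faithful module and read off independence there'' --- does not work as stated, since faithfulness of a module does not imply that the images of the generators are algebraically independent; the unitriangularity argument (each monomial $\prod_p T(p)^{a_p}\prod_{q\nmid N}T(q,q)^{b_q}$ has a distinguished leading double coset with coefficient $1$) is the one to keep, and it is essentially Shimura's.
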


For later use, we recall here the result on the coset decomposition of the generators $T(p)$ and $T(q,q)$.

\begin{lemma}[{\cite[Proposition 5.2.1]{DiamondShurman2005}}]\label{lem:Hecke-left-coset}
	For any prime $p$, we have the coset decomposition
	\begin{align}\label{eq:T(p)-coset}
		\Gamma_0(N) \pmat{1 & 0 \\ 0 & p} \Gamma_0(N) = \begin{cases}
			\displaystyle{\bigsqcup_{j=0}^{p-1} \Gamma_0(N) \beta_j} &\text{if } p \mid N,\\
			\displaystyle{\Gamma_0(N) \beta_\infty \cup \bigsqcup_{j=0}^{p-1} \Gamma_0(N) \beta_j} &\text{if } p \nmid N,
		\end{cases}
	\end{align}
	where $\beta_\infty = \smat{p & 0 \\ 0 & 1}$ and $\beta_j = \smat{1 & j \\ 0 & p}$, $(0 \le j \le p-1)$. For any prime $q \nmid N$, we have
	\[
		\Gamma_0(N) \pmat{q & 0 \\ 0 & q} \Gamma_0(N) = \Gamma_0(N) \pmat{q & 0 \\ 0 & q}.
	\]
\end{lemma}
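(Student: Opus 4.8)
The plan is to treat the two stated decompositions separately. The second one is essentially formal: since $\smat{q & 0 \\ 0 & q} = qI$ is a scalar matrix it is central in $\GL_2(\Q)$, and $(q,N)=1$ ensures $qI \in \Delta_N$, so for all $\gamma_1,\gamma_2\in\Gamma_0(N)$ we have $\gamma_1(qI)\gamma_2 = (qI)\gamma_1\gamma_2 \in \Gamma_0(N)(qI)$; hence $\Gamma_0(N)(qI)\Gamma_0(N) = \Gamma_0(N)(qI)$ is a single right coset.

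For the first decomposition, set $\alpha = \smat{1 & 0 \\ 0 & p}$ and abbreviate $\Gamma = \Gamma_0(N)$. I would establish three things: (a) each $\beta_j$, and also $\beta_\infty$ when $p\nmid N$, lies in $\Delta_N\cap\Gamma\alpha\Gamma$; (b) the listed right cosets $\Gamma\beta_j$ (and $\Gamma\beta_\infty$) are pairwise distinct; (c) every right coset contained in $\Gamma\alpha\Gamma$ is one of those listed. For (a): $\beta_j = \alpha\smat{1 & j \\ 0 & 1}$ with $\smat{1 & j \\ 0 & 1}\in\Gamma$, so $\beta_j\in\alpha\Gamma\subseteq\Gamma\alpha\Gamma$, and clearly $\beta_j\in\Delta_N$; when $p\nmid N$ one has $\beta_\infty\in\Delta_N$, and a short computation using the B\'{e}zout relation for $p$ and $N$ produces $\gamma_1,\gamma_2\in\Gamma$ with $\beta_\infty = \gamma_1\alpha\gamma_2$; when $p\mid N$, by contrast, $\beta_\infty = \smat{p & 0 \\ 0 & 1}\notin\Delta_N$ because its upper-left entry is not coprime to $N$, and since $\Delta_N$ is a semigroup containing $\Gamma\alpha\Gamma$, the coset $\Gamma\beta_\infty$ cannot occur in this case. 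For (b): $\Gamma\beta_i = \Gamma\beta_j$ forces $\beta_i\beta_j^{-1}\in\SL_2(\Z)$, but $\beta_i\beta_j^{-1} = \smat{1 & (i-j)/p \\ 0 & 1}$ is integral only if $i = j$; likewise $\beta_\infty\beta_j^{-1}$ has lower-right entry $1/p\notin\Z$, so $\Gamma\beta_\infty$ is distinct from every $\Gamma\beta_j$.

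Step (c) is the substance of the argument. Since $\Gamma\alpha\Gamma = \bigcup_{\gamma\in\Gamma}\Gamma\alpha\gamma$, it suffices to show that for each $\gamma = \smat{a & b \\ Nc & d}\in\Gamma$ the coset $\Gamma\alpha\gamma$ is one of the listed ones. Here $\alpha\gamma = \smat{a & b \\ Npc & pd}$ with $ad - Nbc = 1$. I would use the coprimality $(a,N)=1$ to construct, via B\'{e}zout, a matrix $\gamma'\in\Gamma$ acting on the left that clears the lower-left entry, so that $\gamma'\alpha\gamma$ becomes upper triangular, say $\smat{A & B \\ 0 & D}$ with $A,D>0$ (after possibly multiplying by $-I\in\Gamma$) and $AD = p$; primality of $p$ then forces $(A,D)=(1,p)$ or $(A,D)=(p,1)$. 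In the first case, left-multiplying by a unipotent $\smat{1 & * \\ 0 & 1}\in\Gamma$ to reduce $B$ modulo $p$ brings the matrix to the form $\beta_j$; in the second case the same device yields $\beta_\infty$. When $p\mid N$ the second case cannot occur, since $\gamma'\alpha\gamma\in\Delta_N$ but $\smat{p & * \\ 0 & 1}\notin\Delta_N$, so only the cosets $\Gamma\beta_j$ arise; when $p\nmid N$ both occur, giving exactly $\Gamma\beta_\infty$ and the $\Gamma\beta_j$.

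The main obstacle is precisely the clearing step in (c): ordinary row reduction over $\Z$ need not keep a matrix inside the congruence subgroup $\Gamma_0(N)$, so one must feed $(a,N)=1$ into the B\'{e}zout step with some care to ensure the clearing matrix $\gamma'$ has lower-left entry divisible by $N$. As an alternative to this explicit reduction, one can count the cosets abstractly using the bijection $\Gamma\backslash\Gamma\alpha\Gamma\cong(\Gamma\cap\alpha^{-1}\Gamma\alpha)\backslash\Gamma$: a direct matrix computation gives $\Gamma\cap\alpha^{-1}\Gamma\alpha = \{g\in\Gamma : p\mid b_g\}$, where $b_g$ is the upper-right entry of $g$, and reducing $\Gamma_0(N)$ modulo $p$ — where it surjects onto $\SL_2(\mathbb{F}_p)$ if $p\nmid N$ and onto the upper-triangular Borel subgroup if $p\mid N$ — shows this intersection has index $p+1$, respectively $p$, in $\Gamma_0(N)$, matching the length of the list produced in (a) and (b).
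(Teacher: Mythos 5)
The paper does not prove this lemma at all --- it is imported verbatim from Diamond--Shurman, Proposition 5.2.1 --- so there is no internal proof to compare against; your argument is a correct, self-contained proof and follows essentially the same route as the cited reference (exhibit the candidate cosets, check pairwise distinctness via integrality of $\beta_i\beta_j^{-1}$, and show every right coset reduces to one of them by an upper-triangularization inside $\Gamma_0(N)$, with the membership criterion $\Gamma\alpha\Gamma\subset\Delta_N$ ruling out $\beta_\infty$ when $p\mid N$). You correctly identify and resolve the only delicate point, namely that the clearing matrix must lie in $\Gamma_0(N)$, which is exactly where $(a,N)=1$ enters. One trivial slip in your alternative counting argument: with the paper's convention that $\Gamma_0(N)$ has lower-left entry divisible by $N$, the image of $\Gamma_0(N)$ modulo $p\mid N$ is the \emph{lower}-triangular Borel, not the upper-triangular one; this does not affect the index computation.
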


In the following subsections, we introduce representations of the Hecke algebra for the three spaces. 

\subsection{Hecke operators on divisors}

Let $\GL_2^+(\R)$ denote the subgroup of $\GL_2(\R)$ consisting of matrices with positive determinant. The group $\GL_2^+(\R)$ acts on the upper-half plane $\bbH$ by fractional linear transformations $\smat{a & b \\ c & d} \cdot \tau \coloneqq \frac{a\tau+b}{c\tau+d}$. For the compact Riemann surface $X_0(N) \coloneqq \Gamma_0(N) \backslash \bbH^*$, we let $\Div(X_0(N))$ denote the free $\Z$-module generated by the points of $X_0(N)$, and call its element a \emph{divisor} on $X_0(N)$. Namely, a divisor $D \in \Div(X_0(N))$ is a formal sum of the form
\[
	D = \sum_{[z] \in X_0(N)} n_z [z],
\]
where $n_z \in \Z$ for all $[z] \in X_0(N)$ and $n_z = 0$ for almost all $[z]$. Here, $[z]$ denotes the equivalence class of $z \in \bbH^*$ in $\Gamma_0(N) \backslash \bbH^*$. More precisely, it should be written as $n_{[z]}$ instead of $n_z$, but for simplicity of notation, we will use $n_z$. Now, we extend the coefficients to $\Q$ and consider the abelian group
\[
	\Div(X_0(N))_\Q \coloneqq \Div(X_0(N)) \otimes_\Z \Q.
\]
As is well-known, the Hecke algebra $R_0(N)$ acts on $\Div(X_0(N))_\Q$ as follows.

\begin{definition}
	For any $u = \Gamma_0(N) \alpha \Gamma_0(N) \in R_0(N)$ and a divisor $D = \sum_{[z] \in X_0(N)} n_z [z] \in \Div(X_0(N))_\Q$, we define $u D \in \Div(X_0(N))_\Q$ by
	\begin{align}\label{eq:def-R0-Div}
		u D \coloneqq \sum_{[z] \in X_0(N)} n_z \sum_{i \in I} [\alpha_i z],
	\end{align}
	where $\Gamma_0(N) \alpha \Gamma_0(N) = \bigsqcup_{i \in I} \Gamma_0(N) \alpha_i$ is a coset decomposition. Moreover, for any $u, v \in R_0(N)$, we define $(u + v) D = uD + vD$.
\end{definition}

Since $[z]$ is a point on $X_0(N) = \Gamma_0(N) \backslash \bbH^*$, we have $[\gamma \alpha_i z] = [\alpha_i z]$ for any $\gamma \in \Gamma_0(N)$. This implies that the right-hand side of \eqref{eq:def-R0-Div} is independent of the choice of $\{\alpha_i\}$. For any $u \in R_0(N)$, the map $D \mapsto uD$ defines an element of $\End(\Div(X_0(N))_\Q)$, because it satisfies $u(D_1 + D_2) = uD_1 + uD_2$. 

\begin{proposition}\label{prop:R0-action-Div}
	 The above map $R_0(N) \to \End(\Div(X_0(N))_\Q)$ is a ring-homomorphism.
\end{proposition}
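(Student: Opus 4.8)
The plan is to verify the three defining properties of a ring homomorphism for the assignment $u \mapsto (D \mapsto uD)$. Additivity in $u$ is immediate from the definition $(u+v)D = uD + vD$, and each $D \mapsto uD$ already lies in $\End(\Div(X_0(N))_\Q)$ by the remark preceding the statement; so the map is a homomorphism of additive groups into $\End(\Div(X_0(N))_\Q)$. For the unit, the identity double coset $\Gamma_0(N)\smat{1 & 0 \\ 0 & 1}\Gamma_0(N)$ equals $\Gamma_0(N)$ and therefore has the single left-coset representative $\smat{1 & 0 \\ 0 & 1}$, which fixes every point of $X_0(N)$; hence it maps to $\mathrm{id}$. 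The substantive point is multiplicativity, $(u \cdot v)D = u(vD)$, and by $\Q$-bilinearity it suffices to prove this when $D = [z]$ is a single point and $u = \Gamma_0(N)\alpha\Gamma_0(N)$, $v = \Gamma_0(N)\beta\Gamma_0(N)$ are single double cosets.

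First I would fix coset decompositions $\Gamma_0(N)\alpha\Gamma_0(N) = \bigsqcup_{i \in I}\Gamma_0(N)\alpha_i$ and $\Gamma_0(N)\beta\Gamma_0(N) = \bigsqcup_{j \in J}\Gamma_0(N)\beta_j$ as in \eqref{eq:coset-decomp-ex}, and unwind both sides of the desired identity. For the right-hand side, $v[z] = \sum_{j \in J}[\beta_j z]$, and applying $u$ gives $u(v[z]) = \sum_{j \in J}\sum_{i \in I}[\alpha_i\beta_j z]$; here I use that \eqref{eq:def-R0-Div} is independent not only of the choice of $\{\alpha_i\}$ but also of the representative $z$ of $[z]$ (replacing $z$ by $\gamma z$ with $\gamma \in \Gamma_0(N)$ amounts to replacing $\{\alpha_i\}$ by the equally valid system $\{\alpha_i\gamma\}$), so that $\beta_j z$ may be taken as the representative of $[\beta_j z]$ when $u$ is applied. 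For the left-hand side, $(u \cdot v)[z] = \sum_w m(u \cdot v; w)\, w[z] = \sum_w m(u \cdot v; w)\sum_{k \in K_w}[\xi_k z]$, where $w = \Gamma_0(N)\xi\Gamma_0(N)$ runs over the double cosets contained in $\Gamma_0(N)\alpha\Gamma_0(N)\beta\Gamma_0(N)$ and $\Gamma_0(N)\xi\Gamma_0(N) = \bigsqcup_{k \in K_w}\Gamma_0(N)\xi_k$ is a left-coset decomposition.

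The two sums are matched by a counting argument. The key observation is that $[\alpha_i\beta_j z]$ depends only on the left coset $\Gamma_0(N)\alpha_i\beta_j$ (and on $z$): if $\Gamma_0(N)\alpha_i\beta_j = \Gamma_0(N)\xi_k$, then $\alpha_i\beta_j = \gamma\xi_k$ for some $\gamma \in \Gamma_0(N)$, whence $[\alpha_i\beta_j z] = [\xi_k z]$. Combining the (possibly non-disjoint) decomposition $\Gamma_0(N)\alpha\Gamma_0(N)\beta\Gamma_0(N) = \bigcup_{(i,j) \in I \times J}\Gamma_0(N)\alpha_i\beta_j$ recalled in \cref{sec-Hecke-operator} with its disjoint refinement $\bigsqcup_w\bigsqcup_{k \in K_w}\Gamma_0(N)\xi_k$, I would sort the pairs $(i,j)$ according to which left coset $\Gamma_0(N)\xi_k$ the product $\alpha_i\beta_j$ falls into, and use that exactly $m(u \cdot v; w)$ of the pairs land in each $\Gamma_0(N)\xi_k$ with $\xi_k$ a representative of $w$ — this being the definition of $m(u \cdot v; w)$ together with its independence from the chosen representative of $w$. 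This rewrites $\sum_{(i,j)}[\alpha_i\beta_j z]$ as $\sum_w m(u \cdot v; w)\sum_{k \in K_w}[\xi_k z]$, i.e.\ $u(v[z]) = (u \cdot v)[z]$, as required.

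I expect the only genuine obstacle to be this last counting step, and its entire weight rests on the representative-independence of the Hecke multiplicities $m(u \cdot v; w)$ — precisely the well-definedness of the Hecke-algebra product imported from \cite[Section 3.1]{Shimura1971} in \cref{sec-Hecke-operator}. Granting that, everything else is formal bookkeeping with coset decompositions and the $\Q$-linearity of \eqref{eq:def-R0-Div}.
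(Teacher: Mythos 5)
Your proposal is correct and follows essentially the same route as the paper: reduce to multiplicativity on a single double coset and a single point, unwind both sides via the left-coset decompositions, and match the sums using the definition of the multiplicities $m(u\cdot v;w)$ and their representative-independence. The only difference is one of exposition — the paper compresses your counting step into the single unexplained equality $(u\cdot v)D = \sum_{[z]} n_z \sum_{(i,j)\in I\times J} [(\alpha_i\beta_j)z]$, which you (rightly) identify as the substantive point and spell out.
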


\begin{proof}
	By definition, the map is linear. Therefore, it suffices to verify the homomorphism property with respect to multiplication. Let $u = \Gamma_0(N) \alpha \Gamma_0(N)$ and $v = \Gamma_0(N) \beta \Gamma_0(N)$ be elements of $R_0(N)$, with coset decompositions as in~\eqref{eq:coset-decomp-ex}. Then we have
	\begin{align*}
		(u\cdot v)D &= \sum_{[z] \in X_0(N)} n_z \sum_{(i,j) \in I \times J} [(\alpha_i \beta_j) z]\\
			&= \sum_{[z] \in X_0(N)} n_z \sum_{j \in J} \sum_{i \in I} [\alpha_i (\beta_j z)]\\
			&= u \left(\sum_{[z] \in X_0(N)} n_z \sum_{j \in J} [\beta_j z] \right) = u (vD),
	\end{align*}
	as desired.
\end{proof}

\subsection{Hecke operators on modular forms}\label{sec:Hecke-modualr-cla}

Let $k$ be an integer. For a matrix $\alpha = \smat{a & b \\ c & d} \in \GL_2^+(\R)$, the action of \emph{weight $k$ slash operator} on a meromorphic function $f: \bbH \to \C$ is defined by
\[
	(f|_k \alpha)(\tau) \coloneqq j(\alpha, \tau)^{-k} (\det \alpha)^{k/2} f (\alpha \tau),
\] 
where $j(\alpha, \tau) \coloneqq c\tau+d$ is the \emph{automorphy factor}. For a congruence subgroup $\Gamma$, a meromorphic function $f$ that is also meromorphic at each cusp and satisfies $f|_k \gamma = f$ for any $\gamma \in \Gamma$ is called a \emph{meromorphic modular form} of weight $k$ on $\Gamma$. The $\C$-vector space consisting of such functions is denoted by $\merM_k(\Gamma)$.

\begin{definition}\label{def:classical-Hecke-op}
	For any $u = \Gamma_0(N) \alpha \Gamma_0(N) \in R_0(N)$ and a meromorphic modular form $f \in \merM_k(\Gamma_0(N))$, we define $f|_k u \in \merM_k(\Gamma_0(N))$ by
	\[
		(f|_k u)(\tau) \coloneqq \sum_{i \in I} (f|_k \alpha_i)(\tau),
	\]
	where $\Gamma_0(N) \alpha \Gamma_0(N) = \bigsqcup_{i \in I} \Gamma_0(N) \alpha_i$ is a coset decomposition. Moreover, for any $u, v \in R_0(N)$, we define $f|_k (u+v) = f|_k u + f|_k v$.
\end{definition}

Since $\Gamma_0(N) \alpha \Gamma_0(N)$ is invariant under right multiplication by $\Gamma_0(N)$, the right multiplication by any $\gamma \in \Gamma_0(N)$ permutes $\{\alpha_i\}_{i \in I}$. Thus, we obtain $f|_k u \in \merM_k(\Gamma_0(N))$. For any $u \in R_0(N)$, the map $f \mapsto f|_k u$ defines an element of $\End(\merM_k(\Gamma_0(N)))$, because it satisfies $(f + g)|_k u = f|_k u + g|_k u$.

\begin{proposition}\label{prop:Hecke-ring-hom}
	The above map $R_0(N) \to \End(\merM_k(\Gamma_0(N)))$ is a ring-homomorphism.
\end{proposition}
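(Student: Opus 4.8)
The map is additive on $R_0(N)$ by construction and sends the identity double coset $\Gamma_0(N)\smat{1&0\\0&1}\Gamma_0(N)$ to the identity endomorphism, so the plan is to verify only multiplicativity. By bilinearity it suffices to treat single double cosets $u = \Gamma_0(N)\alpha\Gamma_0(N)$ and $v = \Gamma_0(N)\beta\Gamma_0(N)$ and to prove $f|_k(u\cdot v) = (f|_k u)|_k v$ for every $f \in \merM_k(\Gamma_0(N))$; together with the commutativity of $R_0(N)$ (\cref{prop:Hecke-property}) this yields the ring-homomorphism property. Note that $(f|_k u)|_k v$ is well defined because $f|_k u \in \merM_k(\Gamma_0(N))$ by the observation preceding the proposition.

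The argument will run parallel to the proof of \cref{prop:R0-action-Div}, with the left action of matrices on $\bbH$ replaced by the right action of the slash operator, for which the cocycle relation $g|_k(\alpha\beta) = (g|_k\alpha)|_k\beta$ holds; since also $f|_k\gamma = f$ for $\gamma \in \Gamma_0(N)$, the form $f|_k\delta$ depends only on the left coset $\Gamma_0(N)\delta$. Fixing coset decompositions $\Gamma_0(N)\alpha\Gamma_0(N) = \bigsqcup_{i\in I}\Gamma_0(N)\alpha_i$ and $\Gamma_0(N)\beta\Gamma_0(N) = \bigsqcup_{j\in J}\Gamma_0(N)\beta_j$ as in~\eqref{eq:coset-decomp-ex}, I would unwind
\[
	(f|_k u)|_k v = \sum_{j\in J}\sum_{i\in I}(f|_k\alpha_i)|_k\beta_j = \sum_{(i,j)\in I\times J} f|_k(\alpha_i\beta_j),
\]
and, writing $u\cdot v = \sum_w m(u\cdot v;w)\,w$ over the double cosets $w = \bigsqcup_{\ell\in L_w}\Gamma_0(N)\xi_{w,\ell}$ contained in $\Gamma_0(N)\alpha\Gamma_0(N)\beta\Gamma_0(N)$,
\[
	f|_k(u\cdot v) = \sum_w m(u\cdot v;w)\,(f|_k w) = \sum_w m(u\cdot v;w)\sum_{\ell\in L_w} f|_k\xi_{w,\ell}.
\]

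It then remains to match these two sums, which is the only non-formal point. Here one invokes the combinatorial fact used by Shimura~\cite[Section 3.1]{Shimura1971} to prove that $m(u\cdot v;w)$ is well defined: grouping the multiset of left cosets $\{\Gamma_0(N)\alpha_i\beta_j : (i,j)\in I\times J\}$ according to the double coset $w$ in which each element lies, every left coset $\Gamma_0(N)\xi_{w,\ell}$ of $w$ occurs exactly $m(u\cdot v;w)$ times. Since $f|_k(\alpha_i\beta_j)$ depends only on the left coset $\Gamma_0(N)\alpha_i\beta_j$, this regrouping converts $\sum_{(i,j)} f|_k(\alpha_i\beta_j)$ into $\sum_w m(u\cdot v;w)\sum_{\ell} f|_k\xi_{w,\ell}$, giving the desired identity. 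I expect this bookkeeping to be the main (essentially the only) obstacle; the rest is the formal algebra of the slash operator together with facts already recorded above.
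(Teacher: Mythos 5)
Your proof is correct and follows essentially the same route as the paper's: both reduce multiplicativity to the identity $\sum_{(i,j)\in I\times J} f|_k(\alpha_i\beta_j) = (f|_k u)|_k v$ via the cocycle relation for the automorphy factor, using the same coset decompositions. The only difference is that you explicitly justify the first equality $f|_k(u\cdot v) = \sum_{(i,j)} f|_k(\alpha_i\beta_j)$ by the multiplicity bookkeeping from Shimura's Section 3.1, a step the paper's proof states without comment.
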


\begin{proof}
	The proof is similar to that of \cref{prop:R0-action-Div}. For any modular form $f \in \merM_k(N)$, we have
	\begin{align*}
		(f|_k(u \cdot v))(\tau) &= \sum_{(i,j) \in I \times J} (f|_k \alpha_i \beta_j)(\tau)\\
			&= \sum_{i \in I} \sum_{j \in J} j(\alpha_i \beta_j, \tau)^{-k} (\det \alpha_i \beta_j)^{k/2} f((\alpha_i \beta_j)\tau).
	\end{align*}
	Since $j(\alpha_i \beta_j, \tau) = j(\alpha_i, \beta_j \tau) j(\beta_j, \tau)$ holds, we obtain
	\begin{align*}
		&= \sum_{i \in I} \sum_{j \in J} j(\alpha_i, \beta_j \tau)^{-k} j(\beta_j, \tau)^{-k} (\det \alpha_i)^{k/2} (\det \beta_j)^{k/2} f(\alpha_i (\beta_j \tau))\\
		&= \sum_{j \in J} j(\beta_j, \tau)^{-k}(\det \beta_j)^{k/2} (f|_k u) (\beta_j \tau)\\
		&= ((f|_k u)|_k v)(\tau),
	\end{align*}
	which completes the proof.
\end{proof}

In many references, the image of $T(n) \in R_0(N)$ under this map is referred to as the \emph{Hecke operator}. 

For later use, we also define the action $(F|_0u)(\tau)$ for modular forms of weight $0$ (not necessarily meromorphic), that is, for maps $F: X_0(N) \to \C$. This action can be defined in a well-defined manner using the same formula as in \cref{def:classical-Hecke-op}:
\[
	(F|_0 u)(\tau) = \sum_{i \in I} F(\alpha_i \tau).
\]
Moreover, as in \cref{prop:Hecke-ring-hom}, one can verify that $R_0(N) \to \mathrm{End}(\mathrm{Map}(X_0(N), \C))$ is a ring-homomorphism.

\subsection{Hecke operators on multiplicative group of meromorphic modular forms}

We define the ring of integer-weight meromorphic modular forms for $\Gamma_0(N)$ as
\[
	\merM_*(\Gamma_0(N)) \coloneqq \bigcup_{k \in \Z} \merM_k(\Gamma_0(N)).
\]
In particular, $\merM_*(\Gamma_0(N))$ forms an abelian group under the usual multiplication. Here we emphasize this structure and regard $\merM_*(\Gamma_0(N))$ as a multiplicative abelian group.

\begin{definition}
	For any $u = \Gamma_0(N) \alpha \Gamma_0(N) \in R_0(N)$ and a meromorphic modular form $f \in \merM_k(\Gamma_0(N))$, we define $f|_* u \in \merM_*(\Gamma_0(N))$ by
	\[
		(f|_* u)(\tau) \coloneqq \prod_{i \in I} (f|_k \alpha_i) (\tau),
	\]
	where $\Gamma_0(N) \alpha \Gamma_0(N) = \bigsqcup_{i \in I} \Gamma_0(N) \alpha_i$ is a coset decomposition. Moreover, for any $u, v \in R_0(N)$, we define $f|_* (u+v) = f|_* u \cdot f|_* v$. 
\end{definition}

Since $\Gamma_0(N) \alpha \Gamma_0(N)$ is invariant under right multiplication by $\Gamma_0(N)$, the right multiplication by any $\gamma \in \Gamma_0(N)$ permutes $\{\alpha_i\}_{i \in I}$. Thus, we obtain $f|_* u \in \merM_{k|I|}(\Gamma_0(N)) \subset \merM_*(\Gamma_0(N))$. For any $u \in R_0(N)$, the map $f \mapsto f|_* u$ defines an element of $\End(\merM_*(\Gamma_0(N)))$, because it satisfies $(f \cdot g)|_* u = f|_* u \cdot g|_* u$.

\begin{proposition}\label{prop:R0-End-hom}
	The above map $R_0(N) \to \End(\merM_*(\Gamma_0(N)))$ is a ring-homomorphism.
\end{proposition}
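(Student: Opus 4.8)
The plan is to imitate the proofs of \cref{prop:R0-action-Div} and \cref{prop:Hecke-ring-hom}, replacing sums by products throughout. Additivity is essentially built into the definition: for $u, v \in R_0(N)$ one has $f|_*(u+v) = f|_* u \cdot f|_* v$ by fiat, and since $\merM_*(\Gamma_0(N))$ is written multiplicatively this is precisely the assertion that $u \mapsto (f \mapsto f|_* u)$ respects addition. The identity double coset $\Gamma_0(N)\smat{1 & 0 \\ 0 & 1}\Gamma_0(N)$ has the one-element coset decomposition with representative the identity matrix, hence acts as the identity endomorphism. So it remains only to verify multiplicativity, $f|_*(u \cdot v) = (f|_* u)|_* v$, for $u = \Gamma_0(N)\alpha\Gamma_0(N)$ and $v = \Gamma_0(N)\beta\Gamma_0(N)$ with coset decompositions as in~\eqref{eq:coset-decomp-ex}.

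First I would record the combinatorial input from Shimura~\cite[Section 3.1]{Shimura1971}, which is already invoked in the proof of \cref{prop:Hecke-ring-hom}: writing $u \cdot v = \sum_w m(u\cdot v; w)\, w$, the multiset of left cosets $\{\Gamma_0(N)\alpha_i\beta_j : (i,j) \in I \times J\}$ is, counted with multiplicity, the disjoint union over all double cosets $w \subset \Gamma_0(N)\alpha\Gamma_0(N)\beta\Gamma_0(N)$ of $m(u\cdot v; w)$ copies of the left-coset decomposition of $w$. Combined with the definition $f|_*(u+v) = f|_* u \cdot f|_* v$ and with $f|_* w = \prod_\ell f|_k \xi_{w,\ell}$ for a coset decomposition $w = \bigsqcup_\ell \Gamma_0(N)\xi_{w,\ell}$, this yields
\[
	f|_*(u\cdot v) = \prod_{w} (f|_* w)^{m(u\cdot v; w)} = \prod_{(i,j) \in I \times J} f|_k(\alpha_i\beta_j).
\]

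Next I would expand $(f|_* u)|_* v$ directly. Setting $g \coloneqq f|_* u = \prod_{i \in I} f|_k\alpha_i \in \merM_{k|I|}(\Gamma_0(N))$, I would compute $g|_{k|I|}\beta_j$ using multiplicativity of the determinant and the cocycle identity $j(\alpha_i\beta_j, \tau) = j(\alpha_i, \beta_j\tau)\, j(\beta_j, \tau)$ — the same manipulation as in the proof of \cref{prop:Hecke-ring-hom}, except that the single factor $j(\beta_j,\tau)^{-k}(\det\beta_j)^{k/2}$ associated with $\beta_j$ is now distributed across the $|I|$ factors of $g$ — to obtain $g|_{k|I|}\beta_j = \prod_{i \in I} f|_k(\alpha_i\beta_j)$. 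Taking the product over $j \in J$ then gives $(f|_* u)|_* v = \prod_{(i,j)} f|_k(\alpha_i\beta_j)$, matching the expression displayed above, and the proof concludes.

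The only genuine subtlety — and the step I would be most careful about — is the bookkeeping of weights: $f|_* u$ has weight $k|I|$ rather than $k$, so the slash operators used in forming $(f|_* u)|_* v$ are weight-$k|I|$ slash operators, and one must check that the resulting powers of $j(\beta_j,\tau)$ and $\det\beta_j$ are exactly what is needed to recombine with the contribution of $f|_* u$ into $\prod_i f|_k(\alpha_i\beta_j)$. Once this is tracked correctly the identity falls out just as in the additive case; well-definedness (independence of the choice of coset representatives) has already been noted in the discussion preceding the statement, so no separate argument is needed for it.
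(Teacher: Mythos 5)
Your proposal is correct and follows essentially the same route as the paper: both sides are identified with $\prod_{(i,j) \in I \times J} f|_k(\alpha_i\beta_j)$ via the cocycle identity $j(\alpha_i\beta_j,\tau) = j(\alpha_i,\beta_j\tau)\,j(\beta_j,\tau)$, with the weight-$k|I|$ bookkeeping for $f|_*u$ handled exactly as in the paper's displayed computation. The only difference is cosmetic: you spell out the combinatorial identification of $f|_*(u\cdot v)$ with the product over $I \times J$ (via the multiplicities $m(u\cdot v;w)$), which the paper takes as the unexplained first line of its proof.
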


\begin{proof}
	The proof is similar to that of \cref{prop:R0-action-Div}. Indeed, for any meromorphic modular form $f \in \merM_k(\Gamma_0(N))$, we have
	\begin{align*}
		(f|_* (u \cdot v))(\tau) &= \prod_{(i, j) \in I \times J} (f|_k \alpha_i \beta_j) (\tau)\\
			&= \prod_{i \in I} \prod_{j \in J} j(\alpha_i \beta_j, \tau)^{-k} (\det \alpha_i \beta_j)^{k/2} f((\alpha_i \beta_j) \tau).
	\end{align*}
	Since $j(\alpha_i \beta_j, \tau) = j(\alpha_i, \beta_j \tau) j(\beta_j, \tau)$ holds, we obtain
	\begin{align*}
		&= \prod_{i \in I} \prod_{j \in J} j(\alpha_i, \beta_j \tau)^{-k} j(\beta_j, \tau)^{-k} (\det \alpha_i)^{k/2} (\det \beta_j)^{k/2} f(\alpha_i (\beta_j \tau))\\
		&= \prod_{j \in J} j(\beta_j, \tau)^{-k |I|} (\det \beta_j)^{k|I|/2} (f|_* u) (\beta_j \tau)\\
		&= \prod_{j \in J} ((f|_* u)|_{k |I|} \beta_j) (\tau) = ((f|_* u)|_* v)(\tau),
	\end{align*}
	which concludes the proof.
\end{proof}

The image $T(n) \in R_0(N)$ under this map is called the \emph{multiplicative Hecke operator} and was introduced by Guerzhoy~\cite{Guerzhoy2006} in 2006 in the process of answering Borcherds' question on explaining the Hecke equivariance of the Borcherds isomorphism. It was later extended to level $N$ by the third author and Shin~\cite{KimShin2025} and so on. Here, we reformulated it as a representation of the Hecke algebra. For instance, the formula for the composition of the multiplicative Hecke operators 
\[
	f|_*T(m)T(n) = \prod_{\substack{d \mid (m,n) \\ (d,N) = 1}} \left(f|_*T \left(\frac{mn}{d^2}\right)\right)^d
\]
given in~\cite[Theorem 1.7]{KimShin2025} follows from \cref{prop:Hecke-property} and \cref{prop:R0-End-hom}.

\section{Hecke equivariance of the divisor map}\label{sec:div-Hecke}

In \cref{sec-Hecke-operator}, we introduced three representations of the Hecke algebra on the spaces $\Div(X_0(N))_\Q$, $\merM_k(\Gamma_0(N))$, and $\merM_*(\Gamma_0(N))$. In this section, we show that the divisor map $\div: \merM_*(\Gamma_0(N)) \to \Div(X_0(N))_\Q$ is Hecke equivariant.

\subsection{The order}\label{sec:divisor-map}

Let $\Gamma$ be a congruence subgroup, and write $X(\Gamma) \coloneqq \Gamma \backslash \bbH^*$ for the associated compact Riemann surface. Before defining the divisor map, we recall the notion of the \emph{order} $\ord_\Gamma(f; [z]) \in \Q$ of a (non-zero) meromorphic modular form $f \in \merM_k(\Gamma)$ at a point $[z] \in X(\Gamma)$, (see~\cite[Chapters 2 and 3]{DiamondShurman2005} for details).

For $z \in \bbH$, we define the \emph{period} $\omega_\Gamma(z)$ of $[z] \in X(\Gamma)$ by
\[
	\omega_\Gamma(z) \coloneqq [\{\pm I\}\Gamma_z : \{\pm I\}] = \begin{cases}
		|\Gamma_z|/2 &\text{if } -I \in \Gamma_z,\\
		|\Gamma_z| &\text{if } -I \not\in \Gamma_z,
	\end{cases}
\]
where $[G: H]$ denotes the index of groups $H \subset G$, and $\Gamma_z$ is the stabilizer subgroup. Let $\ord_z(f) \in \Z$ denote the order of $f$ as a meromorphic function on $\bbH$ at $\tau = z$. Then we define
\[
	\mathrm{ord}_\Gamma(f; [z]) \coloneqq \frac{\mathrm{ord}_z(f)}{\omega_\Gamma(z)}.
\]

As for a cusp $\rho \in \Q \cup \{i\infty\}$, choose $\sigma_\rho \in \SL_2(\Z)$ such that $\rho = \sigma_\rho i\infty$, and define the \emph{width} $h_\Gamma(\rho)$ of the point $[\rho] \in X(\Gamma)$ by
\begin{align*}
	h_\Gamma(\rho) &\coloneqq [\SL_2(\Z)_\rho : \{\pm I\} \Gamma_\rho] = [\SL_2(\Z)_\infty : \{\pm I\} (\sigma_\rho^{-1} \Gamma \sigma_\rho)_\infty].
\end{align*}
 Since $f|_k \sigma_\rho \in \merM_k(\sigma_\rho^{-1} \Gamma \sigma_\rho)$, it has a Fourier expansion of the form
\[
	(f|_k \sigma_\rho)(\tau) = \sum_{n= \nu}^\infty a_n q_{h_\Gamma(\rho)}^n, \quad (a_{\nu} \neq 0, q_h = e^{2\pi i\tau/h}).
\]
We then define $\ord_\Gamma(f; [\rho]) \coloneqq \nu$. Under this definition, the $\ord_{i\infty}(f)$ appearing in \eqref{thm:BKO} can be written as $\ord_{i\infty}(f) = \ord_{\SL_2(\Z)}(f; [i\infty])$.
 
We now show a few basic properties of the order that will be used later.

\begin{lemma}\label{lem:ord-trans}
	Let $G \subset \Gamma$ be congruence subgroups. For $f \in \merM_k(\Gamma) \setminus \{0\}$ and $z \in \bbH^*$, we have
	\[
		\ord_G(f; [z]) = [\{\pm I\} \Gamma_z : \{\pm I\} G_z]\ord_\Gamma(f; [z]).
	\]
\end{lemma}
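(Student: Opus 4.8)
The plan is to split the statement into two cases according to whether $z$ is an interior point of $\bbH$ or a cusp, since the order is defined differently in the two situations. In both cases the key point is that $f$, viewed as a meromorphic modular form on $\Gamma$, is in particular a meromorphic modular form on the smaller group $G$, so the local invariant $\ord_z(f)$ (the order of $f$ as a plain meromorphic function) does not change; only the normalizing factor (the period, resp.\ the width) changes, and we must track the ratio.

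For the interior case, let $z\in\bbH$. By definition $\ord_\Gamma(f;[z]) = \ord_z(f)/\omega_\Gamma(z)$ and $\ord_G(f;[z]) = \ord_z(f)/\omega_G(z)$, where $\ord_z(f)$ is the same integer in both (it is an intrinsic property of $f$ as a meromorphic function near $z$). Hence $\ord_G(f;[z]) = (\omega_\Gamma(z)/\omega_G(z))\,\ord_\Gamma(f;[z])$, and it remains to identify $\omega_\Gamma(z)/\omega_G(z)$ with $[\{\pm I\}\Gamma_z : \{\pm I\}G_z]$. This is a purely group-theoretic identity: since $G_z = \Gamma_z \cap G$, we have $\{\pm I\}G_z \subset \{\pm I\}\Gamma_z$, and by the tower law for indices,
\[
	[\{\pm I\}\Gamma_z : \{\pm I\}G_z] = \frac{[\{\pm I\}\Gamma_z : \{\pm I\}]}{[\{\pm I\}G_z : \{\pm I\}]} = \frac{\omega_\Gamma(z)}{\omega_G(z)},
\]
using the definition $\omega_\Gamma(z) = [\{\pm I\}\Gamma_z : \{\pm I\}]$ and the analogue for $G$. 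One should note that $\{\pm I\}\Gamma_z$ and $\{\pm I\}G_z$ are finite groups (stabilizers in a discrete group acting on $\bbH$ are finite), so all indices here are finite and the tower law applies without fuss.

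For the cusp case, let $\rho \in \Q\cup\{i\infty\}$ and pick $\sigma_\rho \in \SL_2(\Z)$ with $\sigma_\rho i\infty = \rho$. Conjugating by $\sigma_\rho$, $f|_k\sigma_\rho$ is a meromorphic modular form for both $\sigma_\rho^{-1}\Gamma\sigma_\rho$ and $\sigma_\rho^{-1}G\sigma_\rho$, and has a Fourier expansion in $q_h = e^{2\pi i\tau/h}$ where $h$ is the relevant width; writing it instead in the variable $q = e^{2\pi i\tau}$ (a genuine, $\sigma_\rho$-dependent but $\Gamma$- and $G$-independent object), the leading exponent is $\ord_\Gamma(f;[\rho])/h_\Gamma(\rho) = \ord_G(f;[\rho])/h_G(\rho)$. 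Therefore $\ord_G(f;[\rho]) = (h_G(\rho)/h_\Gamma(\rho))\,\ord_\Gamma(f;[\rho])$, and we must check $h_G(\rho)/h_\Gamma(\rho) = [\{\pm I\}\Gamma_\rho : \{\pm I\}G_\rho]$. Again this follows from the tower law: with $h_\Gamma(\rho) = [\SL_2(\Z)_\rho : \{\pm I\}\Gamma_\rho]$ and $h_G(\rho) = [\SL_2(\Z)_\rho : \{\pm I\}G_\rho]$, and $\{\pm I\}G_\rho \subset \{\pm I\}\Gamma_\rho \subset \SL_2(\Z)_\rho$, we get
\[
	h_\Gamma(\rho) = [\SL_2(\Z)_\rho : \{\pm I\}G_\rho]\big/[\{\pm I\}\Gamma_\rho:\{\pm I\}G_\rho] = h_G(\rho)\big/[\{\pm I\}\Gamma_\rho:\{\pm I\}G_\rho].
\]

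The main obstacle, such as it is, is not conceptual but bookkeeping: one must be careful that the ``intrinsic'' quantity being preserved is correctly identified in each case — $\ord_z(f)$ for interior points, and the leading exponent of the $q$-expansion (equivalently $\nu/h$) for cusps — and that the $\{\pm I\}$ factors are handled consistently so that the stated index $[\{\pm I\}\Gamma_z:\{\pm I\}G_z]$ comes out rather than $[\Gamma_z:G_z]$ (the two can differ by a factor of $2$ precisely when one of $\pm I$ lies in $\Gamma_z$ but the relevant issue is whether $-I$ lies in $G_z$ versus $\Gamma_z$). A clean way to avoid any slip is to work throughout with the groups $\{\pm I\}\Gamma_z$ and $\{\pm I\}G_z$ from the start, treat $z\in\bbH$ and $z$ a cusp uniformly as ``$z\in\bbH^*$'' with the period/width playing parallel roles, and invoke the multiplicativity of indices in the finite tower $\{\pm I\}G_z \subset \{\pm I\}\Gamma_z \subset (\text{ambient stabilizer})$. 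I would write the proof as two short paragraphs mirroring the two cases, each reducing to this tower law.
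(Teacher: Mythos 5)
Your proof is correct and follows essentially the same route as the paper's: both cases reduce to the observation that the ``intrinsic'' quantity ($\ord_z(f)$ at an interior point, the leading exponent $\nu/h$ of the $q$-expansion at a cusp) is independent of the group, together with the tower law for indices identifying $\omega_\Gamma(z)/\omega_G(z)$, respectively $h_G(\rho)/h_\Gamma(\rho)$, with $[\{\pm I\}\Gamma_z : \{\pm I\}G_z]$. No substantive differences to report.
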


\begin{proof}
	Case 1: $z \in \bbH$. By definition, we have
	\begin{align*}
		\ord_G(f; [z]) &= \frac{\ord_z(f)}{\omega_G(z)} = \frac{\ord_z(f)}{[\{\pm I\}\Gamma_z : \{\pm I\}]} \frac{[\{\pm I\}\Gamma_z : \{\pm I\}]}{[\{\pm I\}G_z : \{\pm I\}]}\\
			&= [\{\pm I\}\Gamma_z : \{\pm I\}G_z] \ord_\Gamma(f; [z]).
	\end{align*}
	
	Case 2: $\rho \in \Q \cup \{i\infty\}$. Since $f \in \merM_k(\Gamma) \subset \merM_k(G)$, it follows that
	\[
		(f|_k \sigma_\rho)(\tau) = \sum_{n=\ord_\Gamma(f; [\rho])}^\infty a_n q_{h_\Gamma(\rho)}^n = \sum_{n=\ord_G(f; [\rho])}^\infty a_n q_{h_G(\rho)}^n,
	\]
	which implies 
	\[
		\frac{\ord_\Gamma(f; [\rho])}{h_\Gamma(\rho)} = \frac{\ord_G(f; [\rho])}{h_G(\rho)}.
	\]
	Since
	\[
		\frac{h_G(\rho)}{h_\Gamma(\rho)} = \frac{[\SL_2(\Z)_\rho : \{\pm I\} G_\rho]}{[\SL_2(\Z)_\rho : \{\pm I\} \Gamma_\rho]} = [\{\pm I\} \Gamma_\rho : \{\pm I\} G_\rho],
	\]
	we obtain the desired result.
\end{proof}

\begin{lemma}\label{lem:ord-ord}
	For $f \in \merM_k(\Gamma) \setminus \{0\}$ and $z \in \bbH^*$, if $\alpha \in \GL_2^+(\Q)$ satisfies $\alpha^{-1} \Gamma \alpha \subset \SL_2(\Z)$, then we have
	\[
		\ord_{\Gamma}(f; [\alpha z]) = \ord_{\alpha^{-1} \Gamma \alpha}(f|_k\alpha; [z]).
	\]
\end{lemma}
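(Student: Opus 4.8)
The plan is to treat separately the case $z\in\bbH$ and the case where $z$ is a cusp, following the definition of $\ord_\Gamma$ recalled above. Write $G:=\alpha^{-1}\Gamma\alpha$; by hypothesis $G\subset\SL_2(\Z)$, and $G$ is again a congruence subgroup, so $\ord_G$ is defined. Also $\GL_2^+(\Q)\subset\GL_2^+(\R)$ acts on $\bbH$ and permutes $\Q\cup\{i\infty\}$, so $\alpha z\in\bbH$, resp.\ $\alpha z$ is a cusp, according as $z$ is.

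\textbf{Interior points.} Suppose $z\in\bbH$ and set $w:=\alpha z\in\bbH$. Writing $f|_k\alpha=j(\alpha,\cdot)^{-k}(\det\alpha)^{k/2}\,(f\circ\alpha)$, the scalar factor $j(\alpha,\tau)^{-k}(\det\alpha)^{k/2}$ is holomorphic and non-vanishing near $z$ (note $j(\alpha,z)=cz+d\neq0$ since $z\notin\R$), so $\ord_z(f|_k\alpha)=\ord_z(f\circ\alpha)$, and since $\alpha$ is biholomorphic on a neighbourhood of $z$ this equals $\ord_w(f)$. For the periods, $G_z=\alpha^{-1}\Gamma_w\alpha$ and $\alpha^{-1}(\pm I)\alpha=\pm I$ show that conjugation by $\alpha$ identifies $\{\pm I\}\Gamma_w$ with $\{\pm I\}G_z$ compatibly with the subgroup $\{\pm I\}$, so $\omega_G(z)=\omega_\Gamma(w)$. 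Hence $\ord_G(f|_k\alpha;[z])=\ord_z(f|_k\alpha)/\omega_G(z)=\ord_w(f)/\omega_\Gamma(w)=\ord_\Gamma(f;[w])$.

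\textbf{Cusps.} Suppose $z=\rho$ is a cusp and put $\rho':=\alpha\rho$. Choose $\sigma_\rho,\sigma_{\rho'}\in\SL_2(\Z)$ with $\sigma_\rho i\infty=\rho$ and $\sigma_{\rho'}i\infty=\rho'$. Then $u:=\sigma_{\rho'}^{-1}\alpha\sigma_\rho\in\GL_2^+(\Q)$ fixes $i\infty$, so after replacing $u$ by $-u$ if necessary (which only multiplies the slash action by $(-1)^k$ and changes no orders) we may write $u=\smat{a & b\\ 0 & d}$ with $a,d>0$. Using $(f|_k\alpha)|_k\sigma_\rho=f|_k(\alpha\sigma_\rho)=(f|_k\sigma_{\rho'})|_k u$ together with $(g|_k u)(\tau)=d^{-k}(ad)^{k/2}g(\tfrac{a\tau+b}{d})$, the expansion $f|_k\sigma_{\rho'}=\sum_{n\ge\nu}a_n q_{h_\Gamma(\rho')}^n$ (with $a_\nu\ne0$, $\nu=\ord_\Gamma(f;[\rho'])$) becomes
\[
	\bigl((f|_k\alpha)|_k\sigma_\rho\bigr)(\tau)=d^{-k}(ad)^{k/2}\sum_{n\ge\nu}a_n\,e^{2\pi i nb/(d\,h_\Gamma(\rho'))}\,q_{(d/a)h_\Gamma(\rho')}^{\,n},
\]
with nonzero leading coefficient, so its lowest exponent relative to the period $(d/a)h_\Gamma(\rho')$ is $\nu$. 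To finish I must identify this period with the width $h_G(\rho)$ entering the definition of $\ord_G(f|_k\alpha;[\rho])$: from $\sigma_\rho^{-1}G\sigma_\rho=u^{-1}(\sigma_{\rho'}^{-1}\Gamma\sigma_{\rho'})u$ and the elementary identity $u^{-1}\smat{1 & t\\ 0 & 1}u=\smat{1 & td/a\\ 0 & 1}$, the translation subgroup $\{\pm I\}(\sigma_\rho^{-1}G\sigma_\rho)_\infty$ is obtained from $\{\pm I\}(\sigma_{\rho'}^{-1}\Gamma\sigma_{\rho'})_\infty$ by scaling by $d/a$; taking indices in $\SL_2(\Z)_\infty$ gives $h_G(\rho)=(d/a)\,h_\Gamma(\rho')$. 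Therefore $\ord_G(f|_k\alpha;[\rho])=\nu=\ord_\Gamma(f;[\rho'])=\ord_\Gamma(f;[\alpha\rho])$.

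The interior case and the slash-operator manipulations are routine; the real content is in the cusp case, where $\ord_\Gamma$ is normalized by the width rather than being the bare Fourier exponent. The proof hinges on the cancellation of the factor $d/a$ by which the period of the $q$-expansion is stretched under $u$ against the factor $d/a$ by which the width changes; making this cancellation precise (the width computation above) is the one step requiring care.
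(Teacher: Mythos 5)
Your proof is correct, and in the interior case it coincides with the paper's argument (transport the vanishing order via the biholomorphism $\alpha$ and match the periods using $G_z=\alpha^{-1}\Gamma_{\alpha z}\alpha$). In the cusp case, however, you do strictly more than the paper, and the extra work is genuinely needed. The paper's proof simply rewrites $h_{\alpha^{-1}\Gamma\alpha}(\rho)=[\SL_2(\Z)_\infty:\{\pm I\}((\alpha\sigma_\rho)^{-1}\Gamma(\alpha\sigma_\rho))_\infty]$ and declares this to equal $h_\Gamma(\alpha\rho)$, implicitly treating $\alpha\sigma_\rho$ as a scaling matrix for $\alpha\rho$ even though it need not lie in $\SL_2(\Z)$; as stated that identity of widths is false in general (for $\Gamma=\Gamma(2)$, $\alpha=\smat{2&0\\0&1}$, $\rho=i\infty$ one gets $h_{\alpha^{-1}\Gamma\alpha}(i\infty)=1$ but $h_{\Gamma(2)}(i\infty)=2$). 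Your decomposition $\alpha\sigma_\rho=\sigma_{\rho'}u$ with $u=\smat{a&b\\0&d}$ makes the two compensating effects explicit: the $q$-expansion period of $(f|_k\sigma_{\rho'})|_ku$ is stretched by $d/a$, and your conjugation computation shows $h_G(\rho)=(d/a)h_\Gamma(\rho')$, so the normalized lowest exponent is unchanged. This is exactly the cancellation the paper's proof relies on without isolating it, and your version is the one that actually closes the argument; the lemma's conclusion is of course the same either way.
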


\begin{proof}
	Case 1: $z \in \bbH$. Since $\ord_{\alpha z}(f) = \ord_z(f|_k \alpha)$, we have by definition
	\begin{align*}
		\ord_\Gamma(f; [\alpha z]) = \frac{\ord_{\alpha z}(f)}{\omega_\Gamma(\alpha z)} = \frac{\ord_z(f|_k\alpha)}{[\{\pm I\}(\alpha^{-1} \Gamma \alpha)_z : \{\pm I\}]} \frac{[\{\pm I\}(\alpha^{-1} \Gamma \alpha)_z : \{\pm I\}]}{[\{\pm I\} \Gamma_{\alpha z} : \{\pm I\}]}.
	\end{align*}
	Using the identity $(\alpha^{-1} \Gamma \alpha)_z = \alpha^{-1} \Gamma_{\alpha z} \alpha$, and noting that $|\{\pm I\} \alpha^{-1} \Gamma_{\alpha z} \alpha| = |\{\pm I\} \Gamma_{\alpha z}|$, it follows that the right-hand side equals $\ord_{\alpha^{-1} \Gamma \alpha}(f|_k \alpha; [z])$.
	
	Case 2: $\rho \in \Q \cup \{i\infty\}$. Since $\alpha \sigma_\rho i\infty = \alpha \rho$, the identity
	\begin{align*}
		h_{\alpha^{-1} \Gamma \alpha}(\rho) &= [\SL_2(\Z)_\infty : \{\pm I\} (\sigma_\rho^{-1} (\alpha^{-1} \Gamma \alpha) \sigma_\rho)_\infty]\\
			&= [\SL_2(\Z)_\infty : \{\pm I\} ((\alpha \sigma_\rho)^{-1} \Gamma (\alpha \sigma_\rho))_\infty]
	\end{align*}
	implies that $h \coloneqq h_{\alpha^{-1}\Gamma \alpha}(\rho) = h_\Gamma(\alpha \rho)$. Moreover, using the identity $(f|_k \alpha)|_k \sigma_\rho = f|_k (\alpha \sigma_\rho)$, we obtain
	\begin{align*}
		((f|_k \alpha)|_k \sigma_\rho)(\tau) = \sum_{n=\ord_{\alpha^{-1}\Gamma \alpha}(f|_k\alpha; [\rho])}^\infty a_n q_h^n = \sum_{n=\ord_{\Gamma}(f; [\alpha \rho])}^\infty a_n q_h^n,
	\end{align*}
	which shows that $\ord_{\alpha^{-1}\Gamma \alpha}(f|_k\alpha; [\rho]) = \ord_\Gamma(f; [\alpha \rho])$.
\end{proof}

\subsection{The divisor map}\label{sec:divisor-map}

We now define the divisor map and state the main theorem. Recall that $X_0(N) = X(\Gamma_0(N))$.

\begin{definition}
	We define the \emph{divisor map} $\div : \merM_*(\Gamma_0(N)) \setminus \{0\} \to \Div(X_0(N))_\Q$ by
	\[
		\div(f) \coloneqq \sum_{[z] \in X_0(N)} \ord_{\Gamma_0(N)}(f; [z]) [z].
	\]
\end{definition}

\begin{theorem}\label{thm:Hecke-equivariance}
	For any $n \in \Z_{>0}$ such that $(n,N) = 1$ and a non-zero $f \in \merM_*(\Gamma_0(N))$, we have 
	\[
		T(n) \div(f) = \div(f|_* T(n)).
	\]
\end{theorem}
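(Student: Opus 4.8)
The strategy is to reduce everything to a pointwise statement about orders and then sum. Since $R_0(N)$ is generated as a ring by the elements $T(p)$ and $T(q,q)$ (\cref{prop:Hecke-property}), and since by \cref{prop:R0-action-Div}, \cref{prop:R0-End-hom} both $D \mapsto uD$ and $f \mapsto f|_* u$ are ring homomorphisms from $R_0(N)$, while $\div$ converts the multiplicative structure on $\merM_*(\Gamma_0(N))$ into the additive structure on $\Div(X_0(N))_\Q$, it suffices to check the identity $uD = u\,\div(f)$ versus $\div(f|_* u)$ for $u$ ranging over a set of double-coset generators of the form $u = \Gamma_0(N)\alpha\Gamma_0(N)$. (The coprimality $(n,N)=1$ enters exactly here: it guarantees that $T(n)$ lies in the subring generated by the ``good'' $T(p)$, $p\nmid N$, and $T(q,q)$, so we only need $\alpha$ with $\det\alpha$ coprime to $N$; for such $\alpha$ one has $\alpha^{-1}\Gamma_0(N)\alpha \subset \SL_2(\Z)$ is not literally true, but $\alpha^{-1}\Gamma'\alpha \subset \SL_2(\Z)$ holds for a suitable finite-index $\Gamma' \subset \Gamma_0(N)$, which is all \cref{lem:ord-ord} needs after passing through \cref{lem:ord-trans}.) Actually, cleaner: it is enough to prove, for a single double coset $u = \Gamma_0(N)\alpha\Gamma_0(N) = \bigsqcup_{i\in I}\Gamma_0(N)\alpha_i$, the identity
\[
	\div\Bigl(\prod_{i\in I} f|_k \alpha_i\Bigr) \;=\; \sum_{[z]\in X_0(N)} \ord_{\Gamma_0(N)}(f;[z]) \sum_{i\in I}[\alpha_i z],
\]
and then invoke the two homomorphism propositions to extend multiplicatively to all of $R_0(N)$, hence to $T(n)$.

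To prove this single-coset identity, fix $\alpha_i$ and compare the contribution of $f|_k\alpha_i$ to $\div$ with the term $\sum_{[z]} \ord_{\Gamma_0(N)}(f;[z])[\alpha_i z]$. Write $\Gamma' \coloneqq \Gamma_0(N) \cap \alpha_i \Gamma_0(N)\alpha_i^{-1}$, a finite-index subgroup of $\Gamma_0(N)$ with $\alpha_i^{-1}\Gamma'\alpha_i \subset \Gamma_0(N) \subset \SL_2(\Z)$. For a point $[w] \in X(\Gamma')$, \cref{lem:ord-ord} gives $\ord_{\Gamma'}(f;[\alpha_i w]) = \ord_{\alpha_i^{-1}\Gamma'\alpha_i}(f|_k\alpha_i;[w])$, and \cref{lem:ord-trans} (applied on both sides, to pass between $\Gamma'$-orders and $\Gamma_0(N)$-orders, and between $\alpha_i^{-1}\Gamma'\alpha_i$-orders and $\Gamma_0(N)$-orders) lets one relate $\ord_{\Gamma_0(N)}(f|_k\alpha_i; -)$ to $\ord_{\Gamma_0(N)}(f; -)$ at the corresponding points. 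The bookkeeping of these index factors is exactly what makes the map land in $\Div(X_0(N))_\Q$ rather than $\Div(X_0(N))$: the stabilizer indices $[\{\pm I\}\Gamma_0(N)_z : \{\pm I\}\Gamma'_z]$ in the fiber over $[z] \in X_0(N)$ sum, over the points $[w]\in X(\Gamma')$ above $[z]$, to the degree of the covering $X(\Gamma') \to X_0(N)$, which is $|I|$; this is the combinatorial heart of the computation.

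I expect the main obstacle to be precisely this combinatorial reconciliation at elliptic points and cusps: the definition of $uD$ pushes each single point $[z]$ forward to $\sum_i [\alpha_i z]$ with no weighting, whereas the orders carry denominators $\omega_{\Gamma_0(N)}$ and widths $h_{\Gamma_0(N)}$, and one must verify that when one pulls $f$ back along the correspondence, collects zeros/poles of all the $f|_k\alpha_i$, and re-expresses them as $\Gamma_0(N)$-orders, the weighted count matches $\sum_i \ord_{\Gamma_0(N)}(f;[z]) [\alpha_i z]$ term by term — in particular that points $[\alpha_i z]$ and $[\alpha_j z]$ which happen to coincide in $X_0(N)$ get the correct combined multiplicity. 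The clean way to organize this is via the standard correspondence picture: $T(n)$ as the divisor class of the image under the two degeneracy maps $\pi_1,\pi_2: X_0(N,n) \rightrightarrows X_0(N)$, with $uD = (\pi_1)_* \pi_2^* D$ and, on the modular-forms side, $f|_* u$ having divisor $(\pi_1)_* \pi_2^* \div(f)$ by functoriality of $\div$ under pullback and pushforward of meromorphic functions; then the theorem becomes the commutativity of $\div$ with these two operations, where $\pi_2^*\div(f) = \div(\pi_2^* f)$ is the (standard) compatibility of the divisor map with finite covers, and $(\pi_1)_*\div(g) = \div(\mathrm{Nm}_{\pi_1}(g))$ is the norm/pushforward compatibility — and $\mathrm{Nm}_{\pi_1}(\pi_2^* f)$ is exactly $\prod_i f|_k\alpha_i$. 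If the paper prefers to stay elementary and avoid the covering language, the same content is extracted directly from \cref{lem:ord-trans} and \cref{lem:ord-ord} as sketched above, and I would present it that way, doing the elliptic-point and cusp cases separately and checking that the index identities in those two lemmas conspire to remove all denominators.
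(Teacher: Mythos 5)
There is a genuine gap in the elementary route you sketch, and it is exactly the pitfall the paper's first example is designed to expose. You propose to ``fix $\alpha_i$ and compare the contribution of $f|_k\alpha_i$ to $\div$ with the term $\sum_{[z]}\ord_{\Gamma_0(N)}(f;[z])[\alpha_i z]$.'' But by \cref{lem:ord-ord}, the order of $f|_k\alpha_i$ at $[w]$ is governed by the order of $f$ at $[\alpha_i w]$, so the divisor of $f|_k\alpha_i$ is supported on $\alpha_i^{-1}(\mathrm{supp}\,\div f)$, not on $\alpha_i(\mathrm{supp}\,\div f)$. These are different points of $X_0(N)$ in general: for $f=j_1-j_1(i)$ and $T(2)$, the representative $\beta_\infty=\smat{2&0\\0&1}$ contributes $[2i]$ to $T(2)\div(f)$, while $f|_0\beta_\infty=f(2\tau)$ has no zero at $[2i]$ --- the zero there comes from $\beta_0$. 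So the per-coset matching you propose is false term by term; the identity only holds after summing over $i$ and re-indexing the cosets via the involution $\beta\mapsto n\beta^{-1}$, which permutes the representatives of $\Gamma_0(N)\backslash T(n)$ precisely when $(n,N)=1$ (the paper cites \cite[Corollary 6.5.8]{CohenStromberg2017} for this). This is also where coprimality is really used --- not merely, as your parenthetical suggests, to put $T(n)$ in the subring generated by good primes: the paper's second example shows the statement genuinely fails for $T(2)$ on $\Gamma_0(2)$ because that involution is unavailable. Your closing correspondence-theoretic paragraph ($uD=(\pi_1)_*\pi_2^*D$, $\div\circ\mathrm{Nm}=\pi_{1*}\circ\div$, $\pi_2^*\circ\div=\div\circ\pi_2^*$) is a viable alternative and would handle the elliptic-point and cusp bookkeeping cleanly, but it hides the same issue in the unproved assertion that $\mathrm{Nm}_{\pi_1}(\pi_2^*f)=\prod_i f|_k\alpha_i$ with the \emph{same} $\alpha_i$ as in $T(n)D=\sum_i[\alpha_i z]$; that is again the self-transpose property of the Hecke correspondence for $(n,N)=1$ and must be proved, not assumed.

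For comparison, the paper avoids any term-by-term matching: after reducing to $T(p)$, $p\nmid N$ (and noting $T(q,q)$ acts trivially), it introduces a single auxiliary divisor $D=\sum_{[z]\in X(\Gamma(pN))}\ord_{\Gamma(pN)}(f;[z])\sum_{\beta}[\beta z]_{\Gamma_0(N)}$ and evaluates it in two ways --- once using a transitivity lemma for the right action of $\Gamma(pN)\backslash\Gamma_0(N)$ on $\Gamma_0(N)\backslash T(p)$ to recover $T(p)\div(f)$, and once using the substitution $\beta'=p\beta^{-1}$ together with \cref{lem:ord-ord} and \cref{lem:ord-trans} to recover $\div(f|_*T(p))$, with \cref{lem:unfolding} absorbing all the stabilizer-index bookkeeping you correctly identify as the combinatorial heart. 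Your plan names the right ingredients but, as written, the step it rests on would not go through.
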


\begin{example}
	Let $j_1(\tau)$ be the elliptic modular $j$-function. When $N = 1$, we consider $f(\tau) = j_1(\tau) - j_1(i) \in \merM_0(\SL_2(\Z))$ and $T(2) \in R_0(1)$. Since $f(\tau)$ has a unique double zero at $\tau = i$ and a unique simple pole at $\tau = i\infty$, we have
	\[
		\div(f) = [i] - [i\infty].
	\]
	Note that $\ord_{\SL_2(\Z)}(f; [i]) = 2/\omega_{\SL_2(\Z)}(i) = 1$. By~\cref{lem:Hecke-left-coset}, or as computed in \cref{ex:level1}, the double coset defining the Hecke operator $T(2)$ decomposes as
	\[
		\SL_2(\Z) \pmat{1 & 0 \\ 0 & 2} \SL_2(\Z) = \bigsqcup_{j \in \{\infty, 0, 1\}} \SL_2(\Z) \beta_j,
	\]
	with $\beta_\infty = \smat{2 & 0 \\ 0 & 1}, \beta_0 = \smat{1 & 0 \\ 0 & 2}, \beta_1 = \smat{1 & 1 \\ 0 & 2}$. Thus, we obtain
	\begin{align*}
		T(2) \div(f) &= [2i] + \left[\frac{i}{2}\right] + \left[\frac{i+1}{2}\right] - 3[i\infty]\\
			&= 2[2i] + [i] - 3[i\infty],
	\end{align*}
	where we use the identifications $[i/2] = [2i]$ and $[(i+1)/2] = [i]$ in $\SL_2(\Z) \backslash \bbH$.
	
	On the other hand, by definition, we have
	\[
		(f|_* T(2))(\tau) = f(2\tau) f \left(\frac{\tau}{2}\right) f \left(\frac{\tau+1}{2}\right).
	\]
	On the standard fundamental domain of $\SL_2(\Z)$, namely, $\{\tau \in \bbH : -1/2 \le \Re(\tau) \le 0, |\tau| \ge 1\} \cup \{\tau \in \bbH : 0 < \Re(\tau) < 1/2, |\tau| > 1\} \cup \{i\infty\}$, the function $f|_* T(2)$ has a double zero at $\tau = 2i$ due to $f(\tau/2)$, a double zero at $\tau = i$ due to $f((\tau+1)/2)$, and triple pole at $\tau = i\infty$. The factor $f(2\tau)$ has no zero on this domain. 
	Therefore, we obtain
	\[
		\div(f|_* T(2)) = 2[2i] + [i] - 3[i\infty].
	\]
	Thus, we have shown that $T(2) \div(f) = \div(f|_* T(2))$. A noteworthy point here is that, in the computation of $T(2) \div(f)$, the contribution from $\beta_\infty = \smat{2 & 0 \\ 0 & 1}$ yields $[2i]$, whereas in the computation of $\div(f|_* T(2))$, the contribution from $\beta_\infty$, namely $f(2\tau)$, does not affect the coefficient of $[2i]$. \cref{thm:Hecke-equivariance} holds due to an extremely delicate permutation, which is not immediately obvious from the definitions.
\end{example}

\begin{example}
	If an integer $n$ is not coprime to $N$, the equivariance fails. For example, let $N=n=2$ and consider the Hauptmodul $j_{2,1} \in \merM_0(\Gamma_0(2))$ defined by
	\[
		j_{2,1}(\tau) = \frac{\Delta(\tau)}{\Delta(2\tau)} = \frac{1}{q} - 24 + 276q - 2048q^2 + \cdots.
	\]
	Since the function $f(\tau) = j_{2,1}(\tau) - j_{2,1}(i) = j_{2,1}(\tau) - 512 \in \merM_0(\Gamma_0(2))$ has a unique simple zero at $\tau = i$ and a unique simple pole at $\tau = i\infty$, we have $\div(f) = [i] - [i\infty]$. The double coset defining $T(2)$ decomposes as $\Gamma_0(2) \smat{1 & 0 \\ 0 & 2} \Gamma_0(2) = \Gamma_0(2)\beta_0 \sqcup \Gamma_0(2) \beta_1$, so that
	\[
		T(2) \div(f) = \left[\frac{i}{2}\right] + \left[\frac{i+1}{2}\right] - 2[i\infty].
	\]
	On the other hand, by examining the pole of the function $(f|_* T(2))(\tau) = f(\tau/2) f((\tau+1)/2)$ at the cusp $i\infty$, we see that it has a simple pole, hence, the coefficient of $[i\infty]$ in $\div(f|_* T(2))$ is $-1$. More precisely, we can verify that
	\[
		f|_*T(2) = - \frac{\Delta(\tau)}{\Delta(2\tau)} + 286720 + 2097152 \frac{\Delta(2\tau)}{\Delta(\tau)}.
	\]
	In particular, at the cusps, the divisor of $f|_*T(2)$ is given by $-[i\infty] - [0]$. Therefore, $\div(f|_*T(2))$ cannot equal $T(2) \div(f)$.
\end{example}

\subsection{Proof of \cref{thm:Hecke-equivariance}}

For an integer $n$ coprime to $N$, the element $T(n) \in R_0(N)$ can be expressed as a polynomial in terms of $T(p)$ and $T(q,q)$ for primes $p,q$ not dividing $N$. Thus, it suffices to prove the theorem for these generators. From the latter part of~\cref{lem:Hecke-left-coset}, we observe that $T(q,q)$ acts as the identity on both abelian groups. Therefore, the problem reduces to the case of $T(p)$ for any prime $p \nmid N$.

Before beginning the proof, we prepare two elementary lemmas from group theory. The first concerns the fibers of the covering map $X(G) \to X(\Gamma)$ induced by a congruence subgroups $G \subset \Gamma$. Here we state the result under the assumption that $G \triangleleft \Gamma$, as this is the case needed later. In this subsection, we sometimes use notation such as $[z]_\Gamma \in X(\Gamma)$ with subscripts to clarify the space in which the divisor is considered.

\begin{lemma}\label{lem:unfolding}
	Let $G \triangleleft \Gamma$ be congruence subgroups. Fix a complete set of representatives (a fundamental domain) $\mathcal{F}(\Gamma) \subset \bbH^*$ for $\Gamma \backslash \bbH^*$. We define a map $\varphi: G \backslash \Gamma \times \mathcal{F}(\Gamma) \to X(G)$ by
	\[
		\varphi(\alpha, w) \coloneqq [\alpha w]_G.
	\]
	Then, $\varphi$ is surjective, and for any $z \in \bbH^*$, we have $\# \varphi^{-1}([z]_G) = [\Gamma_z : G_z]$. 
\end{lemma}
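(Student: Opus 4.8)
The plan is to describe each fiber of $\varphi$ explicitly; once that is done, both surjectivity and the index formula are immediate. First I would fix $z\in\bbH^*$. Since $\mathcal{F}(\Gamma)$ is a complete set of representatives for $\Gamma\backslash\bbH^*$, there is a \emph{unique} $w_0\in\mathcal{F}(\Gamma)$ lying in the orbit $\Gamma z$; choose $\delta\in\Gamma$ with $z=\delta w_0$. The key step is then to prove the identification
\[
	\varphi^{-1}([z]_G) = \{(G\delta\gamma,\, w_0) : \gamma \in \Gamma_{w_0}\}.
\]
For the inclusion $\subseteq$: if $\varphi(G\beta,w)=[z]_G$ with $\beta\in\Gamma$, $w\in\mathcal{F}(\Gamma)$, then $\beta w\in Gz\subseteq\Gamma z$, so $w$ lies in the same $\Gamma$-orbit as $w_0$, hence $w=w_0$ by uniqueness; writing $\beta w_0=g\delta w_0$ with $g\in G$ gives $(g\delta)^{-1}\beta\in\Gamma_{w_0}$, so $G\beta=G\delta\gamma$ for some $\gamma\in\Gamma_{w_0}$. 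The reverse inclusion is trivial since $\delta\gamma w_0=\delta w_0=z$ for $\gamma\in\Gamma_{w_0}$.

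With the fiber pinned down, the remaining work is to count the distinct cosets $G\delta\gamma$ as $\gamma$ ranges over $\Gamma_{w_0}$. I would observe that $G\delta\gamma=G\delta\gamma'$ iff $\delta\gamma\gamma'^{-1}\delta^{-1}\in G$; since $\gamma\gamma'^{-1}\in\Gamma_{w_0}$ automatically forces $\delta\gamma\gamma'^{-1}\delta^{-1}\in\delta\Gamma_{w_0}\delta^{-1}=\Gamma_{\delta w_0}=\Gamma_z$, this condition is equivalent to $\delta\gamma\gamma'^{-1}\delta^{-1}\in G\cap\Gamma_z=G_z$, i.e.\ to $\gamma$ and $\gamma'$ lying in the same coset of $H\coloneqq\delta^{-1}G_z\delta\subseteq\Gamma_{w_0}$. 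Hence $\#\varphi^{-1}([z]_G)$ equals $[\Gamma_{w_0}:H]=[\delta\Gamma_{w_0}\delta^{-1}:G_z]=[\Gamma_z:G_z]$. In particular this number is at least $1$, so every class of $X(G)$ is attained (explicitly $\varphi(G\delta,w_0)=[z]_G$), which gives surjectivity.

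There is no serious obstacle here; the argument is entirely bookkeeping, and the only points requiring care are that $\varphi$ is well defined on the right cosets $G\backslash\Gamma$ (if $\beta'=g\beta$ then $[\beta'w]_G=[g\beta w]_G=[\beta w]_G$), and that the cusps $z\in\Q\cup\{i\infty\}$ are handled uniformly — there $\Gamma_z$ and $G_z$ are infinite, but the index $[\Gamma_z:G_z]$ is finite and the coset computation is unchanged. Normality of $G$ in $\Gamma$ is not actually used, though it is harmless to retain it as in the statement.
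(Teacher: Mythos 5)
Your proof is correct and follows essentially the same route as the paper's: identify the unique representative $w_0$ of $\Gamma z$ in $\mathcal{F}(\Gamma)$, describe the fiber over $[z]_G$ as the cosets $G\delta\gamma$ with $\gamma \in \Gamma_{w_0}$, and count them via $G \cap \Gamma_z = G_z$. Your closing remark is also accurate: normality of $G$ in $\Gamma$ is not actually needed here (the paper invokes it only to rewrite $\gamma_z G\alpha$ as $G\gamma_z\alpha$, a step your conjugation bookkeeping sidesteps), though the hypothesis is harmless and is used where the lemma is applied.
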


\begin{proof}
	For any $z \in \bbH^*$, there exist a unique $w \in \mathcal{F}(\Gamma)$ and some $\gamma \in \Gamma$ such that $z = \gamma w$. Moreover, there exists a unique $\alpha \in G \backslash \Gamma$ such that $\gamma \in G \alpha$. Therefore
	\[
		[z]_G = [\alpha w]_G = \varphi(\alpha, w),
	\]
	which shows the surjectivity of $\varphi$.
	
	Next, fix such a $\gamma \in \Gamma$ for a given $z \in \bbH^*$. Then we have
	\[
		\{g \in \Gamma : z = gw\} = \Gamma_z \gamma.
	\]
	Since $G$ is a normal subgroup of $\Gamma$, for any $\gamma_z \in \Gamma_z$, we have $\gamma_z \gamma \in \gamma_z G \alpha = G \gamma_z \alpha$. For $\gamma_z, \gamma'_z \in \Gamma_z$, we have $G \gamma_z \alpha = G\gamma'_z \alpha$ if and only if $\gamma'_z \gamma_z^{-1} \in G \cap \Gamma_z = G_z$. Therefore, $\# \varphi^{-1}([z]_G) = [\Gamma_z : G_z]$ as claimed.
\end{proof}

\begin{lemma}\label{lem:Gamma-pN}
	For a prime $p$ such that $p \nmid N$, the right multiplication of the quotient group $\Gamma(pN) \backslash \Gamma_0(N)$ defines a transitive group action on $\Gamma_0(N) \backslash T(p)$.
\end{lemma}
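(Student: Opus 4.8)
The plan is to exhibit a well-defined right action of $\Gamma_0(N)$ on the finite set $\Gamma_0(N)\backslash T(p)$, read off transitivity from the fact that $T(p)$ is a single double coset, and then check that $\Gamma(pN)$ lies in the kernel of this action, so that it factors through the finite group $\Gamma(pN)\backslash\Gamma_0(N)$.

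First I would note that the double coset $T(p) = \Gamma_0(N)\smat{1 & 0 \\ 0 & p}\Gamma_0(N)$ is stable under right multiplication by $\Gamma_0(N)$, and that right translation commutes with the left $\Gamma_0(N)$-action whose orbit set is $\Gamma_0(N)\backslash T(p)$; hence right multiplication by $\Gamma_0(N)$ descends to a well-defined right action on $\Gamma_0(N)\backslash T(p)$, whose elements are $\Gamma_0(N)\beta_\infty, \Gamma_0(N)\beta_0, \dots, \Gamma_0(N)\beta_{p-1}$ by \cref{lem:Hecke-left-coset} (using $p \nmid N$), where $\beta_\infty = \smat{p & 0 \\ 0 & 1}$ and $\beta_j = \smat{1 & j \\ 0 & p}$. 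Transitivity is then immediate: every class in $\Gamma_0(N)\backslash T(p)$ has the form $\Gamma_0(N)\beta_\infty\delta$ for some $\delta \in \Gamma_0(N)$, and right translation by $\delta^{-1}\delta'$ carries $\Gamma_0(N)\beta_\infty\delta$ to $\Gamma_0(N)\beta_\infty\delta'$.

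Next, since $\Gamma(pN)$ is normal in $\SL_2(\Z)$ it is normal in $\Gamma_0(N)$, so $\Gamma(pN)\backslash\Gamma_0(N)$ is a group and it remains only to show that $\Gamma(pN)$ acts trivially, i.e. that $\beta\gamma\beta^{-1} \in \Gamma_0(N)$ for every $\gamma \in \Gamma(pN)$ and every representative $\beta \in \{\beta_\infty, \beta_0, \dots, \beta_{p-1}\}$. I would do this by writing $\gamma = I + pN\delta$ with $\delta \in \mathrm{M}_2(\Z)$ and invoking the adjugate $\widetilde{\beta} = p\beta^{-1}$, which is integral since $\det\beta = p$; then, using $\beta^{-1} = \widetilde{\beta}/p$,
\[
\beta\gamma\beta^{-1} = I + pN\,\beta\delta\beta^{-1} = I + N(\beta\delta\widetilde{\beta}) \in \mathrm{M}_2(\Z),
\]
and this matrix has determinant $\det\beta\cdot\det\gamma\cdot\det\beta^{-1} = 1$ and is congruent to $I$ modulo $N$, hence lies in $\Gamma(N) \subseteq \Gamma_0(N)$. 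Therefore $\Gamma(pN)$ fixes each of the $p+1$ representative cosets, the action descends to a still-transitive action of $\Gamma(pN)\backslash\Gamma_0(N)$, and the proof is complete.

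There is no substantial obstacle here; the one point to handle with care is the well-definedness of the descended action, which is precisely the containment $\beta\,\Gamma(pN)\,\beta^{-1} \subseteq \Gamma_0(N)$ for the representatives $\beta$, and the level $pN$ is chosen exactly so that the factor $1/p$ produced by $\beta^{-1}$ cancels against the $pN$, leaving a matrix not merely integral but congruent to $I$ modulo $N$.
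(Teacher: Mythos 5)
Your proof is correct and follows essentially the same route as the paper's: both arguments reduce to showing that $\Gamma(pN)$ acts trivially (via $\beta\,\Gamma(pN)\,\beta^{-1}\subset\Gamma(N)\subset\Gamma_0(N)$, which you justify explicitly with the adjugate) and that the right $\Gamma_0(N)$-action on $\Gamma_0(N)\backslash T(p)$ is transitive. The only cosmetic difference is in the transitivity step, where the paper exhibits an explicit $\alpha_j\in\Gamma_0(N)$ carrying $\Gamma_0(N)\beta_\infty$ to $\Gamma_0(N)\beta_j$ via a B\'ezout identity $pd-bN=1$, whereas you deduce it abstractly from the fact that $T(p)$ is the single double coset $\Gamma_0(N)\beta_\infty\Gamma_0(N)$ when $p\nmid N$.
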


\begin{proof}
	First, we note that $\Gamma(pN) \triangleleft \Gamma_0(N)$. By \cref{lem:Hecke-left-coset}, we have $\Gamma_0(N) \backslash T(p) = \{\Gamma_0(N) \beta_j : j \in \{\infty, 0, 1, \dots, p-1\}\}$. For any $\beta \in T(p)$, since $\beta \Gamma(pN) \beta^{-1} \subset \Gamma(N) \subset \Gamma_0(N)$, we have $\beta \Gamma(pN) \subset \Gamma_0(N) \beta$. In other words, $\Gamma(pN)$ acts trivially. 
	
	Since $(p,N) = 1$, there exist $b,d \in \Z$ such that $pd-bN = 1$. For any $\beta_j = \smat{1 & j \\ 0 & p}$, let
	\[
		\alpha_j \coloneqq \pmat{p & 0 \\ 0 & 1}^{-1} \pmat{p & b \\ N & d} \beta_j = \pmat{1 & b+j \\ N & jN+dp} \in \Gamma_0(N).
	\]
	Then, this $\alpha_j$ maps $\beta_\infty$ to $\beta_j$.
\end{proof}

Let $f \in \merM_k(\Gamma_0(N))$ be any non-zero meromorphic modular form. To prove the theorem, we calculate a divisor defined by
\[
	D \coloneqq \sum_{[z] \in X(\Gamma(pN))} \mathrm{ord}_{\Gamma(pN)}(f; [z]) \sum_{\beta \in \Gamma_0(N) \backslash T(p)} [\beta z]_{\Gamma_0(N)} \in \Div(X_0(N))_\Q
\]
in two ways. Let us verify that the sum is well-defined. For any $\beta \in T(p)$ with $p \nmid N$, we have $\beta \Gamma(pN) \subset \Gamma_0(N) \beta$, as shown in the proof of \cref{lem:Gamma-pN}. This implies that $[\beta (\gamma z)]_{\Gamma_0(N)} = [\beta z]_{\Gamma_0(N)}$ for any $\gamma \in \Gamma(pN)$.

\subsubsection{$T(p) \div(f)$ on the left-hand side}

Since $\Gamma(pN) \triangleleft \Gamma_0(N)$, we have
\begin{align*}
	D &\overset{\text{\cref{lem:unfolding}}}{=} \sum_{w \in \mathcal{F}(\Gamma_0(N))} \sum_{\alpha \in \Gamma(pN) \backslash \Gamma_0(N)} \frac{\ord_{\Gamma(pN)}(f; [\alpha w])}{[\Gamma_0(N)_{\alpha w} : \Gamma(pN)_{\alpha w}]} \sum_{\beta \in \Gamma_0(N) \backslash T(p)} [\beta \alpha w]_{\Gamma_0(N)}\\
		&\overset{\text{\cref{lem:ord-trans}}}{=} \sum_{w \in \mathcal{F}(\Gamma_0(N))} \sum_{\alpha \in \Gamma(pN)\backslash \Gamma_0(N)} \frac{\ord_{\Gamma_0(N)}(f; [\alpha w])}{[\{\pm I\} \Gamma(pN)_{\alpha w} : \Gamma(pN)_{\alpha w}]} \sum_{\beta \in \Gamma_0(N) \backslash T(p)} [\beta \alpha w]_{\Gamma_0(N)}.
\end{align*}
Noting that $[\alpha w]_{\Gamma_0(N)} = [w]_{\Gamma_0(N)}$ and $[\{\pm I\} \Gamma(pN)_{\alpha w} : \Gamma(pN)_{\alpha w}] = [\{\pm I\} \Gamma(pN) : \Gamma(pN)] = 1$ or $2$, this simplifies to
\[
	= \frac{1}{[\{\pm I\} \Gamma(pN) : \Gamma(pN)]} \sum_{w \in \mathcal{F}(\Gamma_0(N))} \mathrm{ord}_{\Gamma_0(N)}(f; [w]) \sum_{\alpha \in \Gamma(pN) \backslash \Gamma_0(N)} \sum_{\beta \in \Gamma_0(N) \backslash T(p)} [\beta \alpha w]_{\Gamma_0(N)}.
\]
Furthermore, by the transitivity of the $\Gamma(pN) \backslash \Gamma_0(N)$-action (see~\cref{lem:Gamma-pN}), for any $\beta, \beta' \in \Gamma_0(N) \backslash T(p)$, we have
\[
	\#\{\alpha \in \Gamma(pN) \backslash \Gamma_0(N) : \Gamma_0(N) \beta \alpha = \Gamma_0(N) \beta'\} = \frac{|\Gamma(pN) \backslash \Gamma_0(N)|}{|\Gamma_0(N) \backslash T(p)|}.
\]
Thus, we have
\begin{align*}
	D &= \frac{1}{[\{\pm I\} \Gamma(pN) : \Gamma(pN)]} \sum_{w \in \mathcal{F}(\Gamma_0(N))} \mathrm{ord}_{\Gamma_0(N)}(f; [w])\\	
		&\qquad \times \sum_{\beta \in \Gamma_0(N) \backslash T(p)} \sum_{\beta' \in \Gamma_0(N) \backslash T(p)} \frac{|\Gamma(pN) \backslash \Gamma_0(N)|}{|\Gamma_0(N) \backslash T(p)|} [\beta' w]_{\Gamma_0(N)} \\
	&= \frac{[\Gamma_0(N) : \Gamma(pN)]}{[\{\pm I\} \Gamma(pN) : \Gamma(pN)]} \sum_{w \in \mathcal{F}(\Gamma_0(N))} \mathrm{ord}_{\Gamma_0(N)}(f; [w])\sum_{\beta' \in \Gamma_0(N) \backslash T(p)} [\beta' w]_{\Gamma_0(N)}.
\end{align*}
Replacing $w$ with $\gamma w$ for some $\gamma \in \Gamma_0(N)$ merely permutes the inner sum. Therefore, we obtain
\begin{align*}
	D = [\Gamma_0(N) : \{\pm I\} \Gamma(pN)] \sum_{[z] \in X_0(N)} \mathrm{ord}_{\Gamma_0(N)}(f; [z])\sum_{\beta \in \Gamma_0(N) \backslash T(p)} [\beta z]_{\Gamma_0(N)},
\end{align*}
that is,
\begin{align}\label{LHS}
	T(p) \div(f) = \frac{1}{[\Gamma_0(N) : \{\pm I\} \Gamma(pN)]} D.
\end{align}

\subsubsection{$\div(f|_*T(p))$ on the right-hand side}

For each $\beta \in T(p)$, define $\beta' \coloneqq p \beta^{-1}$. Since $p \nmid N$, it follows from \cite[Corollary 6.5.8]{CohenStromberg2017} that there exists a system of representatives $B = \{\beta\}$ such that both $\{\beta\}$ and $\{\beta'\}$ form a system of representatives for the coset decomposition $\Gamma_0(N) \backslash T(p)$. Then, we have
\[	
	D = \sum_{\beta \in B} \sum_{[z] \in X(\Gamma(pN))} \mathrm{ord}_{\Gamma(pN)}(f; [z]) [\beta' z]_{\Gamma_0(N)}.
\]
Changing a variable via $z' = \beta'z$, we obtain by $\beta' \Gamma(pN)z = p \beta^{-1} \Gamma(pN) \beta \beta^{-1} z = \beta^{-1} \Gamma(pN) \beta z'$ that
\begin{align*}
	D &= \sum_{\beta \in B} \sum_{[z'] \in X(\beta^{-1} \Gamma(pN) \beta)} \mathrm{ord}_{\Gamma(pN)}(f; [\beta z']) [z']_{\Gamma_0(N)}\\
		&\overset{\text{\cref{lem:ord-ord}}}{=} \sum_{\beta \in B} \sum_{[z'] \in X(\beta^{-1} \Gamma(pN) \beta)} \ord_{\beta^{-1} \Gamma(pN) \beta} (f|_k \beta; [z']) [z']_{\Gamma_0(N)}.
\end{align*}

By~\cite[Lemma 5.1.1]{DiamondShurman2005}, each $\beta \in T(p)$ determines a congruence subgroup $G_\beta \coloneqq \beta^{-1} \Gamma(pN) \beta \cap \Gamma(pN)$. Hence, we can choose an integer $M > 2$ such that $pN \mid M$ and $\Gamma(M) \subset G_\beta$ for all $\beta \in B$. Moreover, since $\beta^{-1} \Gamma(pN) \beta \subset \Gamma(N)$, it follows that $\Gamma(M) \triangleleft \beta^{-1} \Gamma(pN) \beta$ and $\Gamma(M) \triangleleft \Gamma(pN)$. Therefore, we have
\begin{align*}
	D &\overset{\text{\cref{lem:unfolding}}}{=} \sum_{\beta \in B} \sum_{[z] \in X(\Gamma(M))} \frac{[(\beta^{-1} \Gamma(pN) \beta)_z : \Gamma(M)_z]}{[\beta^{-1} \Gamma(pN) \beta : \Gamma(M)]} \ord_{\beta^{-1} \Gamma(pN) \beta} (f|_k \beta; [z]) [z]_{\Gamma_0(N)}\\
	&\overset{\text{\cref{lem:ord-trans}}}{=} \sum_{\beta \in B} \sum_{[z] \in X(\Gamma(M))} \frac{[(\beta^{-1} \Gamma(pN) \beta)_z : \Gamma(M)_z]}{[\beta^{-1} \Gamma(pN) \beta : \Gamma(M)]} \frac{\ord_{\Gamma(M)} (f|_k \beta; [z])}{[\{\pm I\} (\beta^{-1} \Gamma(pN) \beta)_z : \{\pm I\} \Gamma(M)_z]} [z]_{\Gamma_0(N)}.
\end{align*}
Because we chose $M > 2$, we have $-I \not\in \Gamma(M)$. Hence for any $z$, the stabilizer indices satisfy
\begin{align}\label{eq:ratio-stab}
	\frac{[(\beta^{-1} \Gamma(pN) \beta)_z : \Gamma(M)_z]}{[\{\pm I\} (\beta^{-1} \Gamma(pN) \beta)_z : \{\pm I\} \Gamma(M)_z]} = \frac{2}{[\{\pm I\} \Gamma(pN) : \Gamma(pN)]} = 1 \text{ or } 2.
\end{align}
Since $[\beta^{-1} \Gamma(pN) \beta : \Gamma(M)] = [\Gamma(pN) : \Gamma(M)]$ (see~\cite[Lemma 10.3.1]{CohenStromberg2017}), we get
\begin{align*}
	D &= \frac{2}{[\{\pm I\} \Gamma(pN) : \Gamma(pN)]} \sum_{\beta \in B} \sum_{[z] \in X(\Gamma(M))} \frac{\ord_{\Gamma(M)} (f|_k \beta; [z])}{[\Gamma(pN) : \Gamma(M)]} [z]_{\Gamma_0(N)}.
\end{align*}
By definition, we have $\sum_{\beta \in B} \ord_{\Gamma(M)}(f|_k \beta; [z]) = \ord_{\Gamma(M)}(f|_* T(p); [z])$. Therefore, we obtain
\begin{align*}
	D &= \frac{2}{[\{\pm I\} \Gamma(pN) : \Gamma(M)]} \sum_{[z] \in X(\Gamma(M))} \ord_{\Gamma(M)}(f|_* T(p); [z]) [z]_{\Gamma_0(N)}\\
		&\overset{\text{\cref{lem:unfolding}}}{=} \frac{2}{[\{\pm I\} \Gamma(pN) : \Gamma(M)]} \sum_{w \in \mathcal{F}(\Gamma_0(N))} \sum_{\alpha \in \Gamma(M) \backslash \Gamma_0(N)} \frac{\ord_{\Gamma(M)}(f|_* T(p); [\alpha w])}{[\Gamma_0(N)_{\alpha w}: \Gamma(M)_{\alpha w}]} [\alpha w]_{\Gamma_0(N)}\\
		&\overset{\text{\cref{lem:ord-trans}}}{=} \frac{2}{[\{\pm I\} \Gamma(pN) : \Gamma(M)]} \sum_{w \in \mathcal{F}(\Gamma_0(N))}\\
			&\qquad \qquad \times \sum_{\alpha \in \Gamma(M) \backslash \Gamma_0(N)} \frac{[\Gamma_0(N)_{\alpha w} : \{\pm I\} \Gamma(M)_{\alpha w}]}{[\Gamma_0(N)_{\alpha w}: \Gamma(M)_{\alpha w}]} \ord_{\Gamma_0(N)}(f|_* T(p); [\alpha w]) [\alpha w]_{\Gamma_0(N)}.	
\end{align*}
A few simple calculations yield 
\begin{align*}
	D &= \frac{[\Gamma_0(N) : \Gamma(M)]}{[\{\pm I\} \Gamma(pN) : \Gamma(M)]} \sum_{[z] \in X_0(N)} \ord_{\Gamma_0(N)}(f|_* T(p); [z]) [z]_{\Gamma_0(N)},
\end{align*}
that is,
\begin{align}\label{RHS}
	\div(f|_* T(p)) = \frac{1}{[\Gamma_0(N) : \{\pm I\} \Gamma(pN)]} D.
\end{align}

Combining \eqref{LHS} and \eqref{RHS} completes the proof.

\section{Rohrlich-type divisor sums}\label{sec:Rohrlich}

In our previous work~\cite{JKKM2024}, we studied the Rohrlich-type divisor sum. To express this concept using the divisor map, we introduce the following definition.

\begin{definition}
	For a function $F: X_0(N) \to \C$, we define a map $\mathcal{D}_F: \Div(X_0(N))_\Q \to \C$ by
	\[
		\calD_F(D) \coloneqq \sum_{[z] \in X_0(N)} n_z F(z)
	\]
	for $D = \sum_{[z] \in X_0(N)} n_z [z]$.
\end{definition}

For a function $F: X_0(N) \to \C$ and a meromorphic modular form $f \in \merM_*(\Gamma_0(N))$, the quantity $\calD_F(\div(f))$ is called the \emph{Rohrlich-type divisor sum}. In~\cite{JKKM2024}, we considered the case where $F$ is a weight $0$ smooth modular form, including the $j$-function, and provided its explicit formula in terms of the regularized Petersson inner product. Since the smooth modular form $F$ diverges at a cusp $\rho$ in general, we needed to assign an appropriate value to $F(\rho)$ to regard it as a $\C$-valued function on $X_0(N)$.

In this section, we introduce several consequences that follow from the Hecke equivariance of the divisor map (\cref{thm:Hecke-equivariance}) and the following basic theorem.

\begin{theorem}\label{prop:divisor-sums}
	Let $F: X_0(N) \to \C$. For any $n \in \Z_{>0}$ and $D \in \Div(X_0(N))_\Q$, we have
	\[
		\calD_{F} (T(n) D) = \calD_{F|_0T(n)} (D).
	\]
\end{theorem}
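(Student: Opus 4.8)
The plan is to unwind both sides directly from the definitions, using the coset-decomposition description of the Hecke action on divisors. It suffices to prove the identity for $n = p$ a single element $u = \Gamma_0(N)\alpha\Gamma_0(N) \in R_0(N)$ (and then extend $\Q$-linearly), so I would fix a coset decomposition $\Gamma_0(N)\alpha\Gamma_0(N) = \bigsqcup_{i \in I} \Gamma_0(N)\alpha_i$. By linearity in $D$, it is moreover enough to treat a single point $D = [z_0]$ for $z_0 \in \bbH^*$.

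With these reductions, the left-hand side is, by the definition of the $R_0(N)$-action on $\Div(X_0(N))_\Q$ in \eqref{eq:def-R0-Div},
\[
	\calD_F\big(u\,[z_0]\big) = \calD_F\Big(\sum_{i \in I} [\alpha_i z_0]\Big) = \sum_{i \in I} F(\alpha_i z_0),
\]
where we use that $\calD_F$ is well-defined on classes in $X_0(N)$ because $F$ is a function on $X_0(N)$, so $F(\alpha_i z_0)$ depends only on $[\alpha_i z_0]$. On the other hand, the definition of the weight-$0$ action $(F|_0 u)(\tau) = \sum_{i\in I} F(\alpha_i \tau)$ given at the end of \cref{sec:Hecke-modualr-cla} yields
\[
	\calD_{F|_0 u}\big([z_0]\big) = (F|_0 u)(z_0) = \sum_{i \in I} F(\alpha_i z_0),
\]
and the two expressions agree. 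Summing over the double cosets appearing in $T(n)$ gives the claim for general $n$; no coprimality hypothesis on $n$ is needed here, matching the statement.

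The only points that require a word of care are the well-definedness assertions: that $\calD_F$ is well-defined on $\Div(X_0(N))_\Q$ (immediate, since $F$ is defined on $X_0(N)$), that the divisor $u[z_0]$ is independent of the choice of $\{\alpha_i\}$ (already noted in the excerpt, as $[\gamma\alpha_i z_0] = [\alpha_i z_0]$ for $\gamma \in \Gamma_0(N)$), and correspondingly that $(F|_0 u)(z_0)$ is independent of that choice (same reason, since $F(\gamma\alpha_i z_0) = F(\alpha_i z_0)$). I would also remark that, since the values $F(\alpha_i z_0)$ do not depend on how $z_0$ is chosen within its $\Gamma_0(N)$-class either, both sides descend consistently to functions of $[z_0] \in X_0(N)$. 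There is no real obstacle: the statement is essentially a bookkeeping identity expressing that the adjoint-style relation between the Hecke action on divisors and the Hecke action on functions is built into their common description via coset representatives. The ``hard part,'' such as it is, is merely to phrase the reduction to a single double coset and a single point cleanly enough that the two one-line computations line up.
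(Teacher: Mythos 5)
Your proposal is correct and follows essentially the same route as the paper: both sides are unwound via a coset decomposition of the double coset and the definitions of the two Hecke actions, reducing the identity to the tautology $\sum_{i} F(\alpha_i z_0) = (F|_0 u)(z_0)$. The only cosmetic difference is that the paper states the one-line computation for $T(p)$ with $p$ prime and leaves the extension to general $n$ implicit, whereas you carry it out for an arbitrary double coset and sum; both are fine.
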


\begin{proof}
	For any prime $p$, we have
	\begin{align*}
		\calD_F(T(p)D) &= \sum_{[z] \in X_0(N)} n_z \sum_{\beta \in \Gamma_0(N) \backslash T(p)} F(\beta z) = \sum_{[z] \in X_0(N)} n_z (F|_0 T(p))(z) = \calD_{F|_0 T(p)} (D),
	\end{align*}
	which implies the desired result.
\end{proof}

\subsection{Niebur--Poincar\'{e} series}

For $s \in \C$ and a non-negative integer $m$, we define $\phi_m(\cdot, s) : \R_{>0} \to \C$ by
\[
	\phi_m(v, s) \coloneqq \begin{cases}
		2\pi \sqrt{m v} I_{s-1/2} (2\pi m v) &\text{if } m > 0,\\
		v^s &\text{if } m = 0,
	\end{cases}
\]
where $I_s(v)$ is the $I$-Bessel function. For $\Re(s) > 1$, the \emph{Niebur--Poincar\'{e} series} $F_{N,-m}(\tau, s)$ is defined by
\[
	F_{N, -m}(\tau, s) \coloneqq \sum_{\gamma \in \Gamma_0(N)_\infty \backslash \Gamma_0(N)} \phi_m(v,s) e^{-2\pi imu} |_0 \gamma.
\]
The stabilizer subgroup $\Gamma_0(N)_\infty$ is generated by $-I$ and $T = \smat{1 & 1 \\ 0 & 1}$, namely, $\Gamma_0(N)_\infty = \{\pm T^n: n \in \Z\}$. In particular, when $m=0$, we write $E_N(\tau, s) \coloneqq F_{N,0}(\tau,s)$ and refer to it as the \emph{Eisenstein series}. This series was studied by Niebur~\cite{Niebur1973}, who showed that it converges absolutely for $\Re(s) > 1$, admits the meromorphic continuation in $s$, and is holomorphic at $s=1$ when $m>0$, (see~\cite[Theorem 5]{Niebur1973}). 

The goal of this subsection is to compute the action of the Hecke operator $T(p)$ on the Niebur--Poincar\'{e} series. In proving the main theorem of this section (\cref{cor:L-Hecke-equiv}), only the case where $p \nmid N$ is necessary. Therefore, we provide a detailed explanation for that case. For the case $p \mid N$, the computation was already carried out by the first three authors in~\cite{JKK2023-Hecke}, so we simply state the results. 

We begin with the following lemma, known as the \emph{intertwining relations}. For instance, it appears in the level $1$ case in~\cite[Chapter 6]{Iwaniec1997}, and although not stated explicitly, it is also used implicitly in the general level $N$ case. For the sake of self-containedness, we provide a full proof in the general setting.

\begin{lemma}\label{lem:Intertwining}
	For any prime $p \nmid N$, define $H_p(N) = \{\beta_\infty\} \cup \{\beta_j : 0 \le j < p\}$ as a complete set of representatives for $\Gamma_0(N) \backslash T(p)$, where $\beta_\infty$ and $\beta_j$ are given in \cref{lem:Hecke-left-coset}. Moreover, we fix
	\[
		G_\infty(N) \coloneqq \left\{ \pmat{* & * \\ cN & d} \in \Gamma_0(N) : (cN, d) = 1 \right\}
	\]
	as a complete set of representatives for $\Gamma_0(N)_\infty \backslash \Gamma_0(N)$. Then the map $H_p(N) \times G_\infty(N) \to \Gamma_0(N)_\infty \backslash T(p)$ defined by $(\alpha, \gamma) \mapsto \Gamma_0(N)_\infty \alpha \gamma$ is bijective. 
\end{lemma}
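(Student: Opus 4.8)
The plan is to prove the map $(\alpha,\gamma)\mapsto \Gamma_0(N)_\infty\alpha\gamma$ is a bijection by a counting argument combined with a surjectivity check; injectivity will then follow from equality of cardinalities. First I would record the relevant cardinalities: $\#H_p(N)=p+1$ (from \cref{lem:Hecke-left-coset}, since $p\nmid N$), and $\#(\Gamma_0(N)_\infty\backslash\Gamma_0(N))=[\Gamma_0(N):\Gamma_0(N)_\infty]$ interpreted projectively. The target $\Gamma_0(N)_\infty\backslash T(p)$ is the set of left $\Gamma_0(N)_\infty$-cosets inside $T(p)$; using the double-coset decomposition $T(p)=\bigsqcup_{\alpha\in H_p(N)}\Gamma_0(N)\alpha$ and then refining each $\Gamma_0(N)\alpha$ into $\Gamma_0(N)_\infty$-cosets via $G_\infty(N)$, one sees each piece $\Gamma_0(N)\alpha$ breaks into exactly $[\Gamma_0(N):\Gamma_0(N)_\infty]$ cosets $\Gamma_0(N)_\infty\gamma\alpha$ — but wait, the map sends $(\alpha,\gamma)$ to $\Gamma_0(N)_\infty\alpha\gamma$, with $\gamma$ on the right, so I need to be careful about the order.

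The cleaner approach: show surjectivity directly. Given an element $\delta\in T(p)$, I want to write $\delta = \eta\,\alpha\,\gamma$ with $\eta\in\Gamma_0(N)_\infty$, $\alpha\in H_p(N)$, $\gamma\in G_\infty(N)$. Since $H_p(N)$ represents $\Gamma_0(N)\backslash T(p)$, write $\delta = g\alpha$ for some $g\in\Gamma_0(N)$ and $\alpha\in H_p(N)$; but this gives $\alpha$ on the wrong side. Instead I would use that $\delta'\coloneqq p\delta^{-1}\in T(p)$ as well (since $\det\delta=p$ and the anti-automorphism $\beta\mapsto p\beta^{-1}$ preserves $T(p)$ when $p\nmid N$, cf. the argument in the proof of \cref{thm:Hecke-equivariance} using \cite[Corollary 6.5.8]{CohenStromberg2017}), and run the $\Gamma_0(N)$-coset argument on the transpose/adjugate side; after inverting back, the factorization lands in the desired order $\Gamma_0(N)_\infty\alpha\gamma$. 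Concretely: $\Gamma_0(N)_\infty\backslash\Gamma_0(N)$ is represented by $G_\infty(N)$, and $T(p)/\Gamma_0(N)$ — right cosets — is represented by $\{p\alpha^{-1}:\alpha\in H_p(N)\}$, which by the cited corollary is again a valid set of representatives; combining a right-coset representative with the refinement into $\Gamma_0(N)_\infty$-cosets gives every element of $\Gamma_0(N)_\infty\backslash T(p)$ exactly once.

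For the count matching injectivity: the index $[T(p):\Gamma_0(N)]$ on the right (number of right cosets) equals $p+1=\#H_p(N)$, while $\Gamma_0(N)$ refines into $[\Gamma_0(N):\Gamma_0(N)_\infty]$ left $\Gamma_0(N)_\infty$-cosets, each of which is hit by a unique $\gamma\in G_\infty(N)$; multiplying, $\#(\Gamma_0(N)_\infty\backslash T(p)) = (p+1)\cdot[\Gamma_0(N):\Gamma_0(N)_\infty] = \#H_p(N)\cdot\#G_\infty(N)$, so a surjection between finite sets of equal size is a bijection. I expect the main obstacle to be bookkeeping the sidedness correctly — the target is a quotient by $\Gamma_0(N)_\infty$ on the \emph{left} of $T(p)$, but $T(p)$'s natural double-coset structure $\Gamma_0(N)\backslash T(p)$ has $\Gamma_0(N)$ on the left too, so to slot $G_\infty(N)$ in on the \emph{right} one must pass through the $p\beta^{-1}$ duality (valid precisely because $p\nmid N$). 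Once that symmetry is invoked cleanly, the rest is the standard "equal finite cardinalities plus surjectivity" wrap-up, and the explicit representatives $\beta_\infty=\smat{p&0\\0&1}$, $\beta_j=\smat{1&j\\0&p}$ can be used to exhibit the factorization in coordinates if a fully constructive proof is preferred.
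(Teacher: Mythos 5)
Your plan has a fatal flaw in the injectivity step. You propose to deduce injectivity from surjectivity via ``a surjection between finite sets of equal size is a bijection,'' but none of the sets involved are finite: $G_\infty(N)$ is a complete set of representatives for $\Gamma_0(N)_\infty \backslash \Gamma_0(N)$, which is countably infinite (its elements are indexed by the bottom rows $(cN,d)$ with $(cN,d)=1$, up to sign), and consequently $\Gamma_0(N)_\infty \backslash T(p)$ is infinite as well. A surjection between infinite sets of equal cardinality need not be injective, so the counting argument gives nothing. The paper instead proves injectivity directly: if $\alpha_1\gamma_1 = T^n\alpha_2\gamma_2$, then comparing bottom rows via $\beta_\infty\smat{*&*\\cN&d} = \smat{*&*\\cN&d}$ and $\beta_j\smat{*&*\\cN&d} = \smat{*&*\\pcN&pd}$ forces $\gamma_1=\gamma_2$ (the gcd of the bottom row detects whether $\alpha$ is $\beta_\infty$ or some $\beta_j$, and the row itself pins down $\gamma$), whence $\alpha_1 = T^n\alpha_2$ and $\alpha_1=\alpha_2$ since both lie in a fixed set of left-coset representatives. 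Some such direct argument is indispensable.

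Your surjectivity sketch is closer to workable but is also left incomplete at exactly the delicate point. After using the $\beta\mapsto p\beta^{-1}$ duality to get a right-coset decomposition $T(p)=\bigsqcup\alpha'\Gamma_0(N)$ and then writing $g=\eta\gamma$ with $\eta\in\Gamma_0(N)_\infty$, you obtain $\delta=\alpha'\eta\gamma$ with $\eta$ sandwiched in the \emph{middle}; to move it to the left you still need the commutation relation $\alpha T^m = T^{m'}\alpha'$ for upper-triangular $\alpha$ (followed by renormalizing $\alpha'$ back into $H_p(N)$), which is precisely the identity the paper establishes and uses. The paper's route avoids the duality altogether: it shows by explicit case analysis (splitting on whether $p$ divides the relevant matrix entry) that every $g\in T(p)$ satisfies $T^ng=\alpha\gamma$ with $\alpha\in H_p(N)$, $\gamma\in\Gamma_0(N)$, and then normalizes $\gamma$ into $G_\infty(N)$ using that same commutation relation. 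In short: your surjectivity idea can be completed, but the injectivity argument must be replaced entirely.
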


\begin{proof}
	To show injectivity, we take $\alpha_1, \alpha_2 \in H_p(N)$ and $\gamma_1, \gamma_2 \in G_\infty(N)$ such that $\alpha_1 \gamma_1 = T^n \alpha_2 \gamma_2$ for some $n \in \Z$. Since 
	\[
		\beta_\infty \pmat{* & * \\ cN & d} = \pmat{* & * \\ cN & d}, \quad \beta_j \pmat{* & * \\ cN & d} = \pmat{* & * \\ pcN & pd},
	\]
	it follows that $\gamma_1 = \gamma_2$. Then, $\alpha_1 = T^n \alpha_2$ implies that $\alpha_1 \in \Gamma_0(N) \alpha_2$, and hence $\alpha_1 = \alpha_2$. Therefore, we find $(\alpha_1, \gamma_1) = (\alpha_2, \gamma_2)$.
	
	To prove surjectivity, we show that for any $g \in T(p) = \Gamma_0(N) \smat{1 & 0 \\ 0 & p} \Gamma_0(N)$, there exist $n \in \Z, \alpha \in H_p(N)$, and $\gamma \in \Gamma_0(N)$ such that 
	\begin{align}\label{eq:intertwining}
		T^n g = \alpha \gamma.
	\end{align}
	Assuming this holds, we can choose $m \in \Z$ such that $T^{-m} \gamma \in G_\infty(N)$. Noting the identity
	\[
		\pmat{a & b \\ 0 & d} T^m = T^{m'} \pmat{a & b+am-dm' \\ 0 & d},
	\]
	where $m' \in \Z$ is chosen so that $0 \le b+am-dm' < d$, it follows that for any $\alpha \in H_p(N)$ and $m \in \Z$, there (uniquely) exist $\alpha' \in H_p(N)$ and $m' \in \Z$ such that $\alpha T^m = T^{m'} \alpha'$. Thus, we obtain
	\[
		T^{n-m'} g = T^{-m'} \alpha T^m T^{-m} \gamma = \alpha' \gamma'
	\]
	for some $\alpha' \in H_p(N)$ and $\gamma' \in G_\infty(N)$, proving surjectivity.
	
	We now prove~\eqref{eq:intertwining}. If $g \in \Gamma_0(N) \beta_\infty$, that is, $g = \smat{a & b \\ cN & d} \smat{p & 0 \\ 0 & 1}$ for some $\smat{a & b \\ cN & d} \in \Gamma_0(N)$, then
	\begin{align*}
		\begin{cases}
			\beta_0^{-1} g = \pmat{ap & b \\ cN & d/p} \in \Gamma_0(N) &\text{if } d \equiv 0 \pmod{p},\\
			\beta_\infty^{-1} T^n g = \pmat{a+ncN & (b+dn)/p \\ pcN & d} \in \Gamma_0(N) &\text{for some } n \in \Z, \text{ if } d \not\equiv 0 \pmod{p}.
		\end{cases}
	\end{align*}
	If $g \in \Gamma_0(N) \beta_j$ for some $0 \le j < p$, that is, $g = \smat{a & b \\ cN & d} \smat{1 & j \\ 0 & p}$, then
	\begin{align*}
		\begin{cases}
			\beta_0^{-1} g = \pmat{a & aj + bp \\ cN/p & jcN/p + d} \in \Gamma_0(N) &\text{if } c \equiv 0 \pmod{p},\\
			\beta_\infty^{-1} T^n g = \pmat{(a+ncN)/p & (a+ncN)j/p + b + dn \\ cN & jcN + dp} \in \Gamma_0(N) &\text{for some } n \in \Z, \text{ if } c \not\equiv 0 \pmod{p}.
		\end{cases}
	\end{align*}
	Here, we used the assumption that $p \nmid N$. Thus, the existence of $n \in \Z, \alpha \in H_p(N)$, and $\gamma \in \Gamma_0(N)$ satisfying \eqref{eq:intertwining} in all cases has been confirmed. 
\end{proof}

\begin{proposition}\label{prop:Hecke-Poincare}
	For a non-negative integer $m$ and a prime $p$ with $p \nmid N$, we have
	\[
		(F_{N,-m}|_0 T(p)) (\tau, s) = \begin{cases}
			F_{N, -pm}(\tau, s) + p F_{N,-m/p}(\tau, s) &\text{if } m > 0,\\
			(p^s + p^{1-s}) F_{N,0}(\tau, s) &\text{if } m=0,
		\end{cases}
	\]
	where $F_{N, -m/p}(\tau, s) = 0$ when $m/p \not\in \Z$.
\end{proposition}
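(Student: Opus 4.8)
The plan is to unfold the Niebur--Poincar\'e series against the coset decomposition of $T(p)$ and reduce the whole statement to an explicit evaluation of the weight $0$ slash action of $\beta_\infty,\beta_0,\dots,\beta_{p-1}$ on the seed function $g_m(\tau)\coloneqq\phi_m(v,s)e^{-2\pi imu}$ (here $\tau=u+iv$). One works throughout in the region $\Re(s)>1$, where by Niebur's estimates every series in sight converges absolutely, so the rearrangements below are legitimate; the identity for general $s$ then follows by meromorphic continuation. Using the definition of $|_0T(p)$ on maps $X_0(N)\to\C$ together with \cref{lem:Hecke-left-coset}, I would first write
\[
(F_{N,-m}|_0T(p))(\tau,s)=\sum_{\alpha\in H_p(N)}(F_{N,-m}|_0\alpha)(\tau,s)=\sum_{\alpha\in H_p(N)}\;\sum_{\gamma\in\Gamma_0(N)_\infty\backslash\Gamma_0(N)}g_m(\gamma\alpha\tau).
\]

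Since $(\gamma,\alpha)\mapsto\Gamma_0(N)_\infty\gamma\alpha$ is a bijection from $\bigl(\Gamma_0(N)_\infty\backslash\Gamma_0(N)\bigr)\times\bigl(\Gamma_0(N)\backslash T(p)\bigr)$ onto $\Gamma_0(N)_\infty\backslash T(p)$, and $g_m$ is $\Gamma_0(N)_\infty$-invariant, the double sum equals $\sum_{\delta\in\Gamma_0(N)_\infty\backslash T(p)}g_m(\delta\tau)$; by \cref{lem:Intertwining} this may be re-indexed --- now with the ``small'' representatives on the \emph{left} --- as $\sum_{\alpha\in H_p(N)}\sum_{\gamma\in G_\infty(N)}(g_m|_0\alpha)(\gamma\tau)$. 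Interchanging the finite sum over $\alpha$ with the sum over $\gamma$ then gives
\[
(F_{N,-m}|_0T(p))(\tau,s)=\sum_{\gamma\in G_\infty(N)}\Psi_m(\gamma\tau),\qquad \Psi_m\coloneqq\sum_{\alpha\in H_p(N)}g_m|_0\alpha.
\]

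The heart of the matter is the explicit computation of $\Psi_m$. From $\beta_\infty\tau=p\tau$ and the scaling identity $\phi_m(pv,s)=\phi_{pm}(v,s)$ (immediate from $\phi_m(v,s)=2\pi\sqrt{mv}\,I_{s-1/2}(2\pi mv)$ when $m>0$, and $\phi_0(pv,s)=p^s\phi_0(v,s)$) one gets $g_m|_0\beta_\infty=g_{pm}$ for $m>0$ and $g_0|_0\beta_\infty=p^sg_0$. From $\beta_j\tau=(\tau+j)/p$ one gets $(g_m|_0\beta_j)(\tau)=\phi_m(v/p,s)e^{-2\pi im(u+j)/p}$; summing over $j$ and using that $\sum_{j=0}^{p-1}e^{-2\pi imj/p}$ equals $p$ when $p\mid m$ and $0$ otherwise, together with $\phi_m(v/p,s)=\phi_{m/p}(v,s)$ in the case $p\mid m$, yields $\sum_{j=0}^{p-1}g_m|_0\beta_j=p\,g_{m/p}$ when $p\mid m$ (and $0$ when $p\nmid m$, $m>0$), whereas $\sum_{j=0}^{p-1}g_0|_0\beta_j=p^{1-s}g_0$. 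Hence $\Psi_m=g_{pm}+p\,g_{m/p}$ for $m>0$ (with the convention $g_{m/p}=0$ if $m/p\notin\Z$) and $\Psi_0=(p^s+p^{1-s})g_0$. A point worth stressing is that, when $p\nmid m$, the individual functions $g_m|_0\beta_j$ are \emph{not} invariant under $T=\smat{1&1\\0&1}$ --- each picks up a factor $e^{-2\pi im/p}$ --- so only the full sum $\Psi_m$ is $\Gamma_0(N)_\infty$-invariant; this is exactly why the rearrangement must be carried out after summing over all of $H_p(N)$.

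To finish, since $G_\infty(N)$ is a complete set of representatives for $\Gamma_0(N)_\infty\backslash\Gamma_0(N)$ and each $g_{m'}$ with $m'\in\Z$ is $\Gamma_0(N)_\infty$-invariant, we have $\sum_{\gamma\in G_\infty(N)}g_{m'}(\gamma\tau)=F_{N,-m'}(\tau,s)$ by definition of the Niebur--Poincar\'e series; substituting the formula for $\Psi_m$ then produces $F_{N,-pm}(\tau,s)+pF_{N,-m/p}(\tau,s)$ for $m>0$ and $(p^s+p^{1-s})F_{N,0}(\tau,s)$ for $m=0$, which is the claim. I expect the main obstacle to be not a hard analytic estimate but precisely the coset combinatorics: orienting the intertwining of \cref{lem:Intertwining} so that the $H_p(N)$-representatives act first (restoring $T$-periodicity only after the $j$-sum), and keeping careful track of the determinant/Bessel scaling and the vanishing of the exponential sum.
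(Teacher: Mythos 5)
Your proposal is correct and follows essentially the same route as the paper: both unfold the Poincar\'e series into a single sum over $\Gamma_0(N)_\infty\backslash T(p)$, re-index via \cref{lem:Intertwining} with the $H_p(N)$-representatives on the left, and then evaluate $g_m|_0\beta_\infty$ and $\sum_j g_m|_0\beta_j$ exactly as you do, using $\phi_m(xv,s)=\phi_{mx}(v,s)$ (resp.\ $\phi_0(xv,s)=x^s\phi_0(v,s)$) and the vanishing of the exponential sum when $p\nmid m$. Your added remarks on absolute convergence for $\Re(s)>1$ and on the failure of $T$-invariance of the individual terms $g_m|_0\beta_j$ are correct and consistent with the paper's (more terse) argument.
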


\begin{proof}
	By definition, we have
	\begin{align*}
		(F_{N,-m}|_0 T(p)) (\tau, s) &= \sum_{\alpha \in \Gamma_0(N) \backslash T(p)}\left(\sum_{\gamma \in \Gamma_0(N)_\infty \backslash \Gamma_0(N)} \phi_m(v,s) e^{-2\pi imu} |_0 \gamma \right)|_0 \alpha\\
			&= \sum_{g \in \Gamma_0(N)_\infty \backslash T(p)} \phi_m(v,s) e^{-2\pi imu} |_0 g.
	\end{align*}
	By \cref{lem:Intertwining}, this becomes
	\begin{align*}
		&= \sum_{\gamma \in G_\infty(N)} \sum_{\alpha \in H_p(N)} \phi_m(v,s) e^{-2\pi imu} |_0 \alpha \gamma\\
		&= \sum_{\gamma \in G_\infty(N)} \left(\phi_m(pv, s) e^{-2 \pi i m (pu)} + \sum_{j=0}^{p-1} \phi_m \left(\frac{v}{p}, s \right) e^{-2\pi i m \frac{u+j}{p}} \right) |_0 \gamma\\
		&= \sum_{\gamma \in G_\infty(N)} \left(\phi_m(pv, s) e^{-2 \pi i pm u} + p \delta_{p \mid m} \phi_m \left(\frac{v}{p}, s \right) e^{-2\pi i \frac{m}{p}u} \right) |_0 \gamma,
	\end{align*}
	where $\delta_{p \mid m} = 1$ is defined as $1$ if $p \mid m$, and $0$ otherwise.
	
	If $m > 0$, since $\phi_m(xv, s) = \phi_{mx}(v,s)$ for $x \in \Q$ with $mx \in \Z$, we obtain
	\begin{align*}
		(F_{N,-m}|_0 T(p))(\tau, s) = F_{N, -pm}(\tau, s) + p F_{N, -m/p}(\tau, s).
	\end{align*}
	
	If $m = 0$, since $\phi_0(xv,s) = x^s \phi_0(v,s)$, we obtain $(F_{N,0}|_0 T(p))(\tau, s) = (p^s + p^{1-s}) F_{N,0}(\tau, s)$.
\end{proof}

In the case where $p \mid N$, the following is known.

\begin{proposition}[{\cite[Propositions 3.1 and 3.3]{JKK2023-Hecke}}]\label{prop:JKK-pN}
	For a non-negative integer $m$ and a prime $p$ with $p \mid N$, we have
	\begin{align*}
		(F_{N,-m}|_0T(p))(\tau, s) = \begin{cases}
			F_{N, -pm}(\tau,s) + p F_{N/p, -m/p}(\tau, s) - F_{N/p, -m}(p\tau, s) &\text{if } m > 0,\\
			p^s F_{N, 0}(\tau,s) + p^{1-s} F_{N/p, 0}(\tau, s) - F_{N/p, 0}(p\tau, s) &\text{if } m = 0.
		\end{cases}
	\end{align*}
\end{proposition}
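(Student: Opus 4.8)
The plan is to mirror the proof of \cref{prop:Hecke-Poincare} but with the coset decomposition from \cref{lem:Hecke-left-coset} for the ramified case $p \mid N$, where the double coset $\Gamma_0(N)\smat{1 & 0 \\ 0 & p}\Gamma_0(N)$ decomposes as $\bigsqcup_{j=0}^{p-1}\Gamma_0(N)\beta_j$ with $\beta_j = \smat{1 & j \\ 0 & p}$ and \emph{no} $\beta_\infty$ term. First I would write $(F_{N,-m}|_0 T(p))(\tau,s) = \sum_{g \in \Gamma_0(N)_\infty \backslash T(p)} \phi_m(v,s)e^{-2\pi imu}|_0 g$ exactly as in the previous proof, and then establish the analogue of the intertwining \cref{lem:Intertwining}: a bijective parametrization of $\Gamma_0(N)_\infty \backslash T(p)$. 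The hard part is that when $p \mid N$, the single family $\{\beta_j\}_{j=0}^{p-1}$ together with $\Gamma_0(N)_\infty \backslash \Gamma_0(N)$ is not enough to exhaust $\Gamma_0(N)_\infty \backslash T(p)$ — one must also use the larger group $\Gamma_0(N/p)$ (equivalently, cosets $\Gamma_0(N)_\infty \backslash \Gamma_0(N/p)$), which is precisely why the terms $F_{N/p,\cdot}$ and $F_{N/p,\cdot}(p\tau,s)$ appear in the target formula. I would cite or reproduce the relevant decomposition from \cite{JKK2023-Hecke}.

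Concretely, the key step is to show that $T(p)$ (for $p \mid N$) decomposes, at the level of $\Gamma_0(N)_\infty$-cosets, into three pieces: (i) the ``$\beta_j$'' contribution $\smat{1 & j \\ 0 & p}\gamma$ with $\gamma$ ranging over $\Gamma_0(N)_\infty \backslash \Gamma_0(N)$, giving upon application to $\phi_m(v,s)e^{-2\pi imu}$ the term $F_{N,-pm}(\tau,s)$ (and, when $p \mid m$, an extra $p\,\phi_{m/p}$ piece); (ii) a contribution indexed by $\Gamma_0(N)_\infty \backslash \Gamma_0(N/p)$ composed with $\smat{1 & j \\ 0 & p}$ that produces $p\,F_{N/p,-m/p}(\tau,s)$ when $p \mid m$ and $p^{1-s}F_{N/p,0}(\tau,s)$ when $m=0$; (iii) a contribution from $\smat{p & 0 \\ 0 & 1}\gamma'$ with $\gamma'$ ranging over $\Gamma_0(N)_\infty \backslash \Gamma_0(N/p)$, which translates $\tau \mapsto p\tau$ and yields $-F_{N/p,-m}(p\tau,s)$ (resp.\ $p^s F_{N,0}(\tau,s) - F_{N/p,0}(p\tau,s)$ when $m=0$; here the interplay between $F_N$ and $F_{N/p}$ Eisenstein pieces is delicate). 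The signs and the ``$-F_{N/p}$'' term reflect an inclusion–exclusion between the index sets $\Gamma_0(N)_\infty\backslash\Gamma_0(N)$ and $\Gamma_0(N)_\infty\backslash\Gamma_0(N/p)$, not a genuine subtraction of Poincaré series.

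Having fixed the parametrization, the computation is then purely formal: apply each representative to $\phi_m(v,s)e^{-2\pi imu}$, use $\phi_m(xv,s) = \phi_{mx}(v,s)$ for $x \in \Q$ with $mx \in \Z$ (and $\phi_0(xv,s) = x^s\phi_0(v,s)$ for the Eisenstein case), average the additive character $e^{-2\pi im(u+j)/p}$ over $j$ to pick up the factor $p\,\delta_{p\mid m}$, and re-assemble the sums into the Niebur–Poincaré series $F_{N,-pm}$, $F_{N/p,-m/p}$, and $F_{N/p,-m}(p\tau,s)$ by recognizing each index set as $\Gamma_0(N)_\infty$-cosets (resp.\ $\Gamma_0(N/p)_\infty$-cosets) in the appropriate group. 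I would handle $m>0$ and $m=0$ in parallel, branching only at the step where the Bessel/power behavior of $\phi_m$ under scaling is invoked. The main obstacle is entirely in step (i)–(iii): getting the coset bookkeeping right so that the three families are genuinely disjoint and exhaustive modulo $\Gamma_0(N)_\infty$ — once that is in place, the analytic content is trivial and the result is exactly \cite[Propositions 3.1 and 3.3]{JKK2023-Hecke}, so in the write-up I would simply state it with a pointer to that reference rather than reproving it in full.
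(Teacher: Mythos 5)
Your overall plan---reduce everything to a bookkeeping of $\Gamma_0(N)_\infty$-cosets of $T(p)$ that necessarily involves the larger group $\Gamma_0(N/p)$, then run the same formal $\phi_m$-scaling and character-averaging computation as in \cref{prop:Hecke-Poincare}---has the right shape, and your final fallback of citing \cite{JKK2023-Hecke} is in fact what the paper does: it quotes two identities from that reference and adds them. However, the concrete decomposition you sketch in (i)--(iii) does not work as stated. When $p \mid N$ the cosets $\Gamma_0(N)_\infty \gamma \beta_j$ with $\gamma \in \Gamma_0(N)_\infty \backslash \Gamma_0(N)$ and $\beta_j = \smat{1 & j \\ 0 & p}$ already exhaust all of $\Gamma_0(N)_\infty \backslash T(p)$, since $T(p) = \bigsqcup_j \Gamma_0(N)\beta_j$, so there is no room left over for separate families (ii) and (iii). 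Moreover, applying $\beta_j$ sends $v \mapsto v/p$ and $u \mapsto (u+j)/p$, so your family (i), after averaging over $j$, would produce a $p\,\delta_{p \mid m}$-type term of level $N$, not $F_{N,-pm}(\tau,s)$. The genuine obstruction is that these cosets $\gamma\beta_j$ cannot be rewritten in the ``upper-triangular times group element'' form needed to evaluate $\phi_m(v,s)e^{-2\pi i m u}|_0\, \gamma\beta_j$; that is, the intertwining of \cref{lem:Intertwining} fails, and your sketch does not supply a replacement.

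The mechanism that actually produces the formula is absent from your account: both $(F_{N,-m}|_0 T(p))(\tau,s)$ and the oldform $F_{N/p,-m}(p\tau,s)$ admit expressions as honest coset sums (the former over $j$ and $\Gamma_0(N/p)_\infty \backslash \Gamma_0(N/p)$ with the $\beta_j$-scaling, the latter over $\Gamma_0(N)_\infty \backslash \Gamma_0(N)$ with the $\smat{p & 0 \\ 0 & 1}$-scaling), each carrying the \emph{same} Atkin--Lehner defect $(F_{N,-m}|_0 W_{p,N})(\tau,s)$ (when $p^2 \nmid N$) with opposite signs. Adding the two identities cancels the defect and gives $(F_{N,-m}|_0T(p)) + F_{N/p,-m}(p\tau,s) = F_{N,-pm} + p\,\delta_{p\mid m}F_{N/p,-m/p}$ (with the obvious $m=0$ modification), after which the stated formula follows by moving the oldform to the other side. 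So the minus sign is a rearrangement of an identity, not a signed contribution of a third coset family, and without the two Atkin--Lehner identities your inclusion--exclusion has no way to close. If you do not want to reprove them, citing \cite[Equations (3.1) and (3.3)]{JKK2023-Hecke} and adding them is the correct (and the paper's) shortcut.
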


\begin{proof}
	First, note that the operator $U_p^*$ in the notation of~\cite{JKK2023-Hecke} coincides with $T(p)$ in this article when $p \mid N$. In \cite[Equations (3.1) and (3.3)]{JKK2023-Hecke}, the following equation was obtained:
	\begin{align*}
		F_{N/p, -m}(p\tau, s) &= \sum_{\gamma \in \Gamma_0(N)_\infty \backslash \Gamma_0(N)} \phi_m(p\Im(\gamma \tau),s) e^{-2\pi i m p \Re(\gamma \tau)}\\
		&\qquad + \begin{cases}
			(F_{N, -m}|_0W_{p,N})(\tau, s)&\text{if } p^2 \nmid N,\\
			0 &\text{if } p^2 \mid N,
		\end{cases}
	\end{align*}
	where $W_{p,N}$ is the Atkin--Lehner involution. In addition, in the proof of~\cite[Proposition 3.3 (2) and (5)]{JKK2023-Hecke}, they showed 
	\begin{align}\label{eq:Hecke-sys-Prop3.3}
	\begin{split}
		(F_{N, -m}|_0 T(p))(\tau, s) &= \sum_{j=0}^{p-1} e^{-2\pi i m j/p} \sum_{\gamma \in \Gamma_0(N/p)_\infty \backslash \Gamma_0(N/p)} \phi_m(\Im(\gamma \tau)/p, s) e^{-2\pi i m/p \Re(\gamma \tau)}\\
			&\qquad  - \begin{cases}
			(F_{N,-m}|_0 W_{p,N})(\tau, s) &\text{if } p^2 \nmid N,\\
			0 &\text{if } p^2 \mid N.
		\end{cases}
	\end{split}
	\end{align}
	By summing these two equations, we have
	\begin{align*}
		&(F_{N, -m}|_0 T(p))(\tau, s) + F_{N/p, -m}(p\tau, s)\\
		&= \sum_{\gamma \in \Gamma_0(N)_\infty \backslash \Gamma_0(N)} \phi_m(pv,s) e^{-2\pi i m p u} |_0 \gamma\\
		&\qquad + \sum_{j=0}^{p-1} e^{-2\pi imj/p} \sum_{\gamma \in \Gamma_0(N/p)_\infty \backslash \Gamma_0(N/p)} \phi_m(v/p, s) e^{-2\pi i mu/p} |_0 \gamma.
	\end{align*}
	From the calculations carried out in the proof of \cref{prop:Hecke-Poincare}, the assertion follows.
\end{proof}

\subsection{$p$-plication formulas for polyharmonic Maass forms}

For use in the next subsection, we introduce notation for the coefficient in the Laurent expansion of the Niebur--Poincar\'{e} series at $s=1$.

\begin{definition}\label{def:Jpoly}
	For a non-negative integer $m$ and an integer $r \in \Z$, we define $\J_{N, m, r}(\tau)$ by
	\[
		\J_{N,m,r}(\tau) \coloneqq \mathrm{Coeff}_{(s-1)^r} F_{N, -m}(\tau, s),
	\]
	that is,
	\begin{align}\label{def:J-Laurent}
		F_{N,-m}(\tau, s) = \sum_{r \in \Z} \J_{N,m,r}(\tau) (s-1)^r.
	\end{align}
\end{definition}

The function $\J_{N,m,r}(\tau)$ is a \emph{polyharmonic Maass form}. This means that it is a smooth modular form annihilated by a finite number of actions of the hyperbolic Laplacian 
\[
	\Delta_0 \coloneqq -v^2 \left(\frac{\partial^2}{\partial u^2} + \frac{\partial^2}{\partial v^2}\right).
\]
In particular, those that vanish after a single action of $\Delta_0$ are called \emph{harmonic Maass forms}. The functions $\J_{N,m,r}(\tau)$ can be shown to satisfy these conditions by following the the original works~\cite{LagariasRhoades2016, Matsusaka2020} where the notion was introduced, (see also~\cite[Section 4.1]{JKKM2024}). However, since this property is not essential to this article, we omit the proof.

\begin{example}
	By the Kronecker limit formula, we have
	\begin{align*}
		\J_{N,0,-1}(\tau) &= \frac{1}{\mathrm{vol}(\Gamma_0(N) \backslash \bbH)},\\
		\J_{1,0,0}(\tau) &= -\frac{1}{2\pi} \log(v^6|\Delta(\tau)|) + C,
	\end{align*}
	where $C$ is an explicit constant, and $\Delta(\tau) = q \prod_{n=1}^\infty (1-q^n)^{24}$ is a cusp form of weight $12$ on $\SL_2(\Z)$. Moreover, 
	\[
		\J_{1,1,0}(\tau) = j_1(\tau) = q^{-1} + 24 + 196884q + \cdots
	\]
	is the elliptic modular $j$-function, and $\J_{N,n,1}(\tau)$ coincides with the function $- \J_{N,n}(\tau)$ introduced in the previous work~\cite[Corollary 4.6]{JKKM2024}. Here, we note that in the previous article, the function $\J_{N,n}(\tau)$ was defined using the Maass--Poincar\'{e} series $P_{N,0,-n}(\tau, s)$, whereas in the present article, the function $\J_{N,n,1}(\tau)$ is defined using the Niebur--Poincar\'{e} series $F_{N,-n}(\tau, s)$. These series are related by $F_{N,-n}(\tau, s) = \Gamma(s) P_{N,0,-n}(\tau, s)$. Furthermore, apart from $\J_{N,0,-1}(\tau)$, we have $\J_{N,m,r}(\tau) = 0$ for any $r < 0$. For more details, see our previous article~\cite[Section 4.1]{JKKM2024}.
\end{example}

\begin{corollary}\label{cor:p-plication}
	For any prime $p \nmid N$ and a positive integer $m$, we have
	\begin{align}\label{eq:p-pricate-non}
		\J_{N,m,r}|_0 T(p) = \J_{N,pm,r} + p \J_{N,m/p,r}.
	\end{align}
	As for $m = 0$, we have
	\[
		\J_{N,0,r}|_0 T(p) = \sum_{j=-1}^r (p + (-1)^{r-j}) \frac{(\log p)^{r-j}}{(r-j)!} \J_{N,0,j}.
	\]
\end{corollary}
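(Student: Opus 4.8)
The plan is to derive both formulas in \cref{cor:p-plication} from the Hecke action on the Niebur--Poincar\'e series (\cref{prop:Hecke-Poincare}) by comparing Laurent coefficients at $s=1$. First I would recall that the Hecke operator $T(p)$ acts $\C$-linearly on the space $\mathrm{Map}(X_0(N),\C)$, and in particular commutes with taking Laurent coefficients: since $F_{N,-m}(\tau,s) = \sum_{r} \J_{N,m,r}(\tau)(s-1)^r$ and $|_0T(p)$ is applied coefficientwise in $\tau$, we have $(F_{N,-m}|_0T(p))(\tau,s) = \sum_r (\J_{N,m,r}|_0T(p))(\tau)(s-1)^r$. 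So extracting $\mathrm{Coeff}_{(s-1)^r}$ from the identity in \cref{prop:Hecke-Poincare} immediately yields the corollary's formulas, provided one knows the Laurent expansions of the functions appearing on the right-hand side.

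For the case $m>0$, this is entirely routine: \cref{prop:Hecke-Poincare} gives $F_{N,-m}|_0T(p) = F_{N,-pm}(\tau,s) + p\,F_{N,-m/p}(\tau,s)$, and extracting $\mathrm{Coeff}_{(s-1)^r}$ gives $\J_{N,m,r}|_0T(p) = \J_{N,pm,r} + p\,\J_{N,m/p,r}$, which is exactly \eqref{eq:p-pricate-non}; here $\J_{N,m/p,r}=0$ when $p\nmid m$ matches the convention $F_{N,-m/p}=0$ in that case. I would state this in one or two lines.

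The substantive part is the case $m=0$, where \cref{prop:Hecke-Poincare} gives $(F_{N,0}|_0T(p))(\tau,s) = (p^s + p^{1-s})F_{N,0}(\tau,s)$. Here the scalar factor $p^s + p^{1-s}$ depends on $s$, so extracting the $(s-1)^r$ coefficient requires expanding this scalar as a power series in $(s-1)$ and convolving with the Laurent series of $F_{N,0}$. I would write $p^s = p\cdot p^{s-1} = p\sum_{\ell\ge 0}\frac{(\log p)^\ell}{\ell!}(s-1)^\ell$ and $p^{1-s} = p^{-(s-1)} = \sum_{\ell\ge 0}\frac{(-1)^\ell(\log p)^\ell}{\ell!}(s-1)^\ell$, so that $p^s + p^{1-s} = \sum_{\ell\ge 0}(p + (-1)^\ell)\frac{(\log p)^\ell}{\ell!}(s-1)^\ell$. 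Multiplying by $F_{N,0}(\tau,s) = \sum_{j\ge -1}\J_{N,0,j}(\tau)(s-1)^j$ (using $\J_{N,0,j}=0$ for $j<-1$, as recalled in the Example) and collecting the coefficient of $(s-1)^r$ via the Cauchy product with $\ell = r-j$ gives $\J_{N,0,r}|_0T(p) = \sum_{j=-1}^{r}(p+(-1)^{r-j})\frac{(\log p)^{r-j}}{(r-j)!}\J_{N,0,j}$, as claimed.

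There is no real obstacle here; the only point requiring a word of care is justifying that $|_0T(p)$ commutes with the Laurent expansion in $s$, which follows because $T(p)$ is a finite sum of slash operators acting on the variable $\tau$ only, hence acts termwise on any convergent Laurent series in $s$ whose coefficients are functions of $\tau$. The lower limit $j=-1$ in the sum is exactly the statement that $\J_{N,0,j}$ vanishes for $j\le -2$, which I would cite from the Example above (referring to \cite[Section 4.1]{JKKM2024}).
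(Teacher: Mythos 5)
Your proposal is correct and follows essentially the same route as the paper: the paper's proof also derives both cases by extracting the $(s-1)^r$ Laurent coefficient from \cref{prop:Hecke-Poincare}, using the expansion $p^s + p^{1-s} = \sum_{j\ge 0}(p+(-1)^j)\frac{(\log p)^j}{j!}(s-1)^j$ for the $m=0$ case. Your additional remarks on why $|_0T(p)$ commutes with the Laurent expansion and why the sum truncates at $j=-1$ are just a more explicit spelling-out of what the paper leaves implicit.
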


\begin{proof}
	It immediately follows from \cref{prop:Hecke-Poincare}, \eqref{def:J-Laurent}, and the expansion
	\[
		p^s + p^{1-s} = \sum_{j=0}^\infty (p + (-1)^j) \frac{(\log p)^j}{j!} (s-1)^j,
	\]
	by comparing the $r$-th Laurent coefficient at $s=1$.
\end{proof}

Even in the case $p \mid N$, the same strategy leads to an expression for $\J_{N,m,r}|_0 T(p)$ from \cref{prop:JKK-pN}. For instance, when $m > 0$ and $p \mid N$, we obtain the following:
\begin{align}\label{eq:p-prication}
	(\J_{N,m,r}|_0 T(p))(\tau) = \J_{N,pm,r}(\tau) + p\J_{N/p, m/p,r}(\tau) - \J_{N/p,m,r}(p\tau).
\end{align}
In particular, noting that $F_{N,-1}|_0T(p) = 0$ by \eqref{eq:Hecke-sys-Prop3.3} when $p^2 \mid N$, we obtain the following result.

\begin{corollary}\label{cor:Hecke-system-p}
	For a prime number $p$, we have
	\[
		\J_{N,p,r}(\tau) = \begin{cases}
			(\J_{N,1,r}|_0 T(p))(\tau) &\text{if } p \nmid N,\\
			(\J_{N,1,r}|_0 T(p))(\tau) + \J_{N/p, 1,r}(p \tau) &\text{if } p \mid N \text{ and } p^2 \nmid N,\\
			\J_{N/p, 1,r}(p\tau)  &\text{if } p^2 \mid N.
		\end{cases}
	\]
\end{corollary}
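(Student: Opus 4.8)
The plan is to derive \cref{cor:Hecke-system-p} directly from \cref{cor:p-plication} and formula~\eqref{eq:p-prication}, by specializing the index $m$ to $m=1$ and reading off the terms that survive. I would split into the three cases according to the $p$-adic valuation of $N$, and in each case use the convention $\J_{N,m/p,r} = 0$ whenever $m/p \notin \Z$, together with the identities $F_{N,-m/p} = 0$ and $F_{N/p,-m/p}=0$ in the same situation. This reduces everything to a short bookkeeping argument: for $m=1$ the ``divided'' terms $\J_{N,1/p,r}$ and $\J_{N/p,1/p,r}$ are all zero since $1/p \notin \Z$.

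First, if $p \nmid N$, I would apply~\eqref{eq:p-pricate-non} with $m=1$: this gives $\J_{N,1,r}|_0 T(p) = \J_{N,p,r} + p\,\J_{N,1/p,r} = \J_{N,p,r}$, since $\J_{N,1/p,r}=0$. This is exactly the first line. Second, if $p \mid N$ but $p^2 \nmid N$, I would apply~\eqref{eq:p-prication} with $m=1$: this yields $\J_{N,1,r}|_0 T(p) = \J_{N,p,r} + p\,\J_{N/p,1/p,r} - \J_{N/p,1,r}(p\tau)$. Again $\J_{N/p,1/p,r}=0$ because $1/p\notin\Z$, so rearranging gives $\J_{N,p,r}(\tau) = (\J_{N,1,r}|_0 T(p))(\tau) + \J_{N/p,1,r}(p\tau)$, the second line. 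Third, if $p^2 \mid N$, then by~\eqref{eq:Hecke-sys-Prop3.3} (the $p^2 \mid N$ case) we have $F_{N,-1}|_0 T(p) = 0$ as a function of $s$, hence comparing Laurent coefficients at $s=1$ via \cref{def:Jpoly} gives $\J_{N,1,r}|_0 T(p) = 0$ for every $r$. Substituting this into~\eqref{eq:p-prication} with $m=1$ (noting once more that $\J_{N/p,1/p,r}=0$) collapses the right-hand side to $\J_{N,p,r}(\tau) = \J_{N/p,1,r}(p\tau)$, which is the third line.

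The only mildly delicate point — and the step I would flag as the ``main obstacle,'' though it is really just a matter of care rather than depth — is justifying the passage from the functional identity $F_{N,-1}|_0 T(p) = 0$ to the coefficientwise vanishing $\J_{N,1,r}|_0 T(p)=0$ for all $r$. This requires knowing that the Hecke operator $T(p)$, being a finite sum of slash operators $|_0\alpha$ with $\alpha$ independent of $s$, commutes with extracting the Laurent coefficient at $s=1$; equivalently, that the meromorphic continuation in $s$ and the finite linear combination defining $F_{N,-1}|_0 T(p)$ interact as expected, so that $\mathrm{Coeff}_{(s-1)^r}(F_{N,-1}|_0T(p)) = (\mathrm{Coeff}_{(s-1)^r} F_{N,-1})|_0 T(p) = \J_{N,1,r}|_0 T(p)$. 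This is immediate since $|_0 T(p)$ acts term by term on the Laurent expansion~\eqref{def:J-Laurent}, exactly as in the proof of \cref{cor:p-plication}. With that observation in hand, all three cases follow by the substitutions described above, completing the proof.
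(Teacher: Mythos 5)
Your proposal is correct and matches the paper's (implicitly sketched) argument exactly: the paper derives the corollary by specializing \eqref{eq:p-pricate-non} and \eqref{eq:p-prication} to $m=1$, using the vanishing of the terms with index $1/p$, and invoking $F_{N,-1}|_0T(p)=0$ from \eqref{eq:Hecke-sys-Prop3.3} in the case $p^2\mid N$. Your extra remark about commuting $|_0T(p)$ with extraction of Laurent coefficients is a harmless and valid elaboration of a step the paper treats as immediate.
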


The formulas \eqref{eq:p-pricate-non} and \eqref{eq:p-prication}, known as the \emph{$p$-plication formulas}, were originally discovered by Conway and Norton~\cite{ConwayNorton1979, Norton1984} and further studied by Koike~\cite{Koike-pre, Koike1988} in the case of Hauptmoduln. Here, we present a natural extension of these formulas to polyharmonic Maass forms. In particular, the case $r=0$ of \cref{cor:Hecke-system-p} complements \cite[Theorem 1.1 (3) and Corrigendum]{BKLOR2018} for $p \mid N$ and $p^2 \nmid N$. Although we provide formulas only for $\J_{N,p,r}(\tau)$ with prime $p$, the general case $\J_{N,n,r}(\tau)$ for any positive integer $n$ can be computed inductively using \eqref{eq:p-pricate-non} and \eqref{eq:p-prication}. Since this argument was already carried out in the case $r=0$ in~\cite{JKK2023-Hecke}, we omit the details here.

\subsection{Rohrlich-type divisor sums}

For a meromorphic modular form $f \in \merM_*(\Gamma_0(N))$ and the Niebur--Poincar\'{e} series $F_{N, -m}(\tau, s)$, we consider
\begin{align*}
	\calR_{N,m}(s; f) &\coloneqq \calD_{F_{N,-m}(\cdot, s)}(\div(f)) = \sum_{[z] \in X_0(N)} \ord_{\Gamma_0(N)}(f; [z]) F_{N,-m}(z,s)
\end{align*}
and its Laurent coefficients $\calR_{N,m,r}(f) \coloneqq \mathrm{Coeff}_{(s-1)^r} \calR_{N,m}(s; f) = \calD_{\J_{N,m,r}}(\div(f))$. For the well-definedness, the value of $F_{N,-m}(\tau,s)$ or $\J_{N,m,r}(\tau)$ at a cusp $[\rho] \in X_0(N)$ must be appropriately defined depending on the context. For example, in~\cite[Corollary 4.5]{JKKM2024}, for $m > 0$ and $r=0$, the value $\J_{N,m,0}(\rho)$ is defined as the constant term in the Fourier expansion of $\J_{N,m,0}|_0 \sigma_\rho$, ensuring that
\begin{align}\label{eq:BKLOR}
	\calR_{N,m,0}(f) = -\mathrm{Coeff}_{q^m} \left(\frac{\Theta f}{f} \right)
\end{align}
holds, where $\Theta \coloneqq \frac{1}{2\pi i} \frac{\dd}{\dd \tau}$. Under an appropriate (and any) definition, we have the following.

\begin{theorem}\label{cor:L-Hecke-equiv}
	For any prime $p \nmid N$, we have
	\[
		\calR_{N,m}(s; f|_* T(p)) = \begin{cases}
			\calR_{N, pm}(s;f) + p \calR_{N,m/p}(s;f) &\text{if } m>0,\\
			(p^s + p^{1-s}) \calR_{N,0}(s;f) &\text{if } m=0.
		\end{cases}
	\]
\end{theorem}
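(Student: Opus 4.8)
The plan is to string together the three equivariance statements already proved. By the very definition of the Rohrlich-type divisor sum, $\calR_{N,m}(s;f|_*T(p)) = \calD_{F_{N,-m}(\cdot,s)}(\div(f|_*T(p)))$. Since $p\nmid N$ forces $(p,N)=1$, \cref{thm:Hecke-equivariance} replaces the argument of $\calD$ by $T(p)\div(f)$; then \cref{prop:divisor-sums}, applied with $n=p$ and $D=\div(f)$, moves the Hecke operator onto the weight-$0$ function, giving $\calD_{F_{N,-m}(\cdot,s)|_0T(p)}(\div(f))$. Finally \cref{prop:Hecke-Poincare} evaluates this Hecke-translate: for $m>0$ it equals $F_{N,-pm}(\cdot,s)+pF_{N,-m/p}(\cdot,s)$, and for $m=0$ it equals $(p^s+p^{1-s})F_{N,0}(\cdot,s)$. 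Using that $F\mapsto\calD_F(D)$ is $\C$-linear in $F$ and reading the resulting terms back as Rohrlich-type divisor sums yields the two asserted identities (with $\calR_{N,m/p}(s;f)=0$ when $p\nmid m$, matching the convention $F_{N,-m/p}=0$ there).

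The one point that needs care is the behaviour at the cusps. The divisor $\div(f)$ generally carries nonzero mass at $[\rho]\in X_0(N)$ with $\rho$ a cusp, so $\calD_{F_{N,-m}(\cdot,s)}$ and the intermediate $\calD_{F_{N,-m}(\cdot,s)|_0T(p)}$ both evaluate the relevant functions at cusps, using whichever regularization has been fixed to make $F_{N,-m}(\cdot,s)$ a genuine map $X_0(N)\to\C$. The identity $\calD_F(T(p)D)=\calD_{F|_0T(p)}(D)$ of \cref{prop:divisor-sums} is a purely formal rearrangement of a finite sum and is valid for \emph{any} $F\colon X_0(N)\to\C$, so it places no restriction. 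What must be checked is that the functional identity of \cref{prop:Hecke-Poincare}, stated on $\bbH$, persists at the cusps, i.e.\ $(F_{N,-m}(\cdot,s)|_0T(p))(\rho)=F_{N,-pm}(\rho,s)+pF_{N,-m/p}(\rho,s)$ for every cusp $[\rho]$ (and the analogue for $m=0$). This follows by feeding the chosen ``value at a cusp'' recipe into the Fourier expansion of $F_{N,-m}(\cdot,s)|_0T(p)$ at $\rho$ and invoking \cref{prop:Hecke-Poincare} in the interior; since the same recipe is applied to each Niebur--Poincar\'{e} series occurring, the outcome is independent of the particular choice of values at cusps --- this is precisely the assertion ``under an appropriate (and any) definition'' in the statement.

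Accordingly, the genuine content of the argument is this cusp-compatibility bookkeeping; the interior identity is immediate from the three cited results. No further obstacles should arise: $f\neq 0$ gives $f|_*T(p)=\prod_i f|_k\alpha_i\neq 0$, so $\div(f|_*T(p))$ is well defined; the hypothesis $p\nmid N$ is exactly what licenses both \cref{thm:Hecke-equivariance} and \cref{prop:Hecke-Poincare}; and since both sides are meromorphic in $s$, it would even suffice to verify the identity for $\Re(s)>1$. Taking Laurent coefficients at $s=1$ of the identity just proved then returns at once the corresponding statement for $\calR_{N,m,r}(f)=\calD_{\J_{N,m,r}}(\div(f))$, consistent with the $p$-plication formula \eqref{eq:p-pricate-non} for $\J_{N,m,r}$.
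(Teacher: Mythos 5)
Your proposal is correct and follows exactly the paper's own route: unwind $\calR_{N,m}(s;f|_*T(p))$ as $\calD_{F_{N,-m}}(\div(f|_*T(p)))$, apply \cref{thm:Hecke-equivariance}, then \cref{prop:divisor-sums}, then \cref{prop:Hecke-Poincare}. Your additional remarks on compatibility of the cusp values are a reasonable elaboration of what the paper dismisses with ``under an appropriate (and any) definition,'' but they do not change the argument.
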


\begin{proof}
	This result follows from a combination of \cref{cor:BKO} and \cref{prop:Hecke-Poincare}. From the discussion so far, we obtain
	\begin{align*}
		\calR_{N,m}(s; f|_*T(p)) &= \calD_{F_{N,-m}}(\div(f|_* T(p)))\\
			&\overset{\text{\cref{thm:Hecke-equivariance}}}{=} \calD_{F_{N,-m}}(T(p) \div(f))\\
			&\overset{\text{\cref{prop:divisor-sums}}}{=} \calD_{F_{N,-m}|_0T(p)} (\div(f)).
	\end{align*}
	Applying \cref{prop:Hecke-Poincare} yields the desired result.
\end{proof}

\begin{example}
	For $m=1$, a prime $p \nmid N$, and $f \in \merM_*(\Gamma_0(N))$, \cref{cor:L-Hecke-equiv} implies that $\calR_{N,1}(s; f|_* T(p)) = \calR_{N,p}(s;f)$. By setting $s=1$, we obtain
	\[
		\mathrm{Coeff}_{q^1} \left(\frac{\Theta (f|_*T(p))}{f|_*T(p)}\right) = \mathrm{Coeff}_{q^p} \left(\frac{\Theta f}{f}\right)
	\]
	from \eqref{eq:BKLOR}. This identity can be viewed as a higher-level generalization of $(j_1|_0 T(n), f)_\mathrm{BKO} = (j_1, f|_*T(n))_\mathrm{BKO}$ as is shown in \cref{cor:BKO}.
\end{example}

\begin{example}\label{ex:self-adjoint}
	In \cite[Corollary 4.6]{JKKM2024}, we considered the case where $f \in \merM_k(\Gamma_0(N))$ has neither zeros nor poles at any cusp, that is, $\ord_{\Gamma_0(N)}(f;[\rho]) = 0$ for all cusps $\rho$, and showed that
	\[
		\calR_{N,m,1}(f) = \frac{1}{2\pi} \langle \J_{N,m,0}, \log(v^{k/2}|f|) \rangle^\mathrm{reg}
	\]
	for $m>0$, where $\langle \cdot, \cdot \rangle^\mathrm{reg}$ denotes the regularized Petersson inner product. This is the situation in which the conditions assumed in the explanation of \eqref{eq:Rohrlich-Petersson} are satisfied. Our \cref{cor:L-Hecke-equiv} then directly yields
	\begin{align*}
		\langle \J_{N,m,0}, \log(v^{(p+1)k/2}|f|_*T(p)|)\rangle^{\mathrm{reg}} = \langle \J_{N, pm, 0} + p \J_{N, m/p,0}, \log(v^{k/2}|f|)\rangle^{\mathrm{reg}}.
	\end{align*}
	Since $\log(v^{(p+1)k/2}|f|_* T(p)|) = \log(v^{k/2} |f|) |_0T(p)$ and $\J_{N, pm, 0} + p \J_{N, m/p,0} = \J_{N,m,0}|_0 T(p)$, this is equivalent to the expression
	\[
		\langle \J_{N,m,0}, \log(v^{k/2} |f|) |_0T(p))\rangle^{\mathrm{reg}} = \langle \J_{N,m,0}|_0 T(p), \log(v^{k/2}|f|)\rangle^{\mathrm{reg}},
	\]
	which expresses the self-adjointness of the Hecke operator, as discussed at the end of \cref{sec:Intro}.
\end{example}

Finally, we reprove \cite[Theorem 4.8]{JKKM2024} using \cref{cor:L-Hecke-equiv}, replacing the original argument based on the self-adjointness of the Hecke operator. Before proceeding with the proof, we introduce some notation.

Let $N = 1$. We use the same setting as in~\cite[Section 4.6]{JKKM2024}. For a negative discriminant $d$, let $f_d(\tau)$ denote the weight $1/2$ weakly holomorphic modular form on $\Gamma_0(4)$ satisfying Kohnen's plus condition. For a positive fundamental discriminant $D > 1$, the generalized Borcherds product $\Psi_D(f_d)$ is given by
\[
	\Psi_D(f_d) = \prod_{Q \in \calQ_{dD/\SL_2(\Z)}} (j(\tau) - j(\alpha_Q))^{\frac{\chi_D(Q)}{|\PSL_2(\Z)_Q|}}
\]
as described in \cite[Section 8]{BruinierOno2010}. Here, $\Psi_D(f_d)$ is a meromorphic modular form of weight 0 with a certain unitary character $\sigma: \SL_2(\Z) \to \C^\times$. Since the abelianization of $\SL_2(\Z)$ is isomorphic to $\Z/12\Z$, it follows that $\sigma^{12}$ is the trivial character and $\Psi_D(f_d)^{12} \in \merM_*(\SL_2(\Z))$. 

We now define the twisted traces of CM values of $E(\tau, s) = F_{1,0}(\tau, s)$. This can also be expressed in terms of our function $\calR_{1,0}(s;f)$.
\begin{align*}
	\mathrm{Tr}_{d,D}(E(\cdot, s)) \coloneqq \sum_{Q \in \calQ_{dD}/\SL_2(\Z)} \frac{\chi_D(Q)}{|\PSL_2(\Z)_Q|} E(\alpha_Q, s) = \frac{1}{12} \calR_{1,0}(s; \Psi_D(f_d)^{12}).
\end{align*}

\begin{corollary}\label{cor:DIT-KM}
	For a prime $p \nmid D$, we have
	\[
		\mathrm{Tr}_{dp^2, D}(E(\cdot, s)) = \left(p^{1-s} + p^s - \left(\frac{d}{p}\right)\right) \mathrm{Tr}_{d,D}(E(\cdot, s)) - p \mathrm{Tr}_{d/p^2, D}(E(\cdot, s)).
	\]
\end{corollary}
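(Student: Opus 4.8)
The strategy is to evaluate $\calR_{1,0}(s;\Psi_D(f_d)^{12}\,|_*T(p))$ in two ways and compare. The starting point is that, by the order computation for $j(\tau)-j(\alpha_Q)$ on $X(1)$ together with the orthogonality relation $\sum_{Q\in\calQ_{d'D}/\SL_2(\Z)}\chi_D(Q)/|\PSL_2(\Z)_Q|=0$ for $D>1$, the divisor of $\Psi_D(f_{d'})^{12}$ has no component at the cusp and equals $12\,Z_D(d')$, where $Z_D(d'):=\sum_{Q\in\calQ_{d'D}/\SL_2(\Z)}\frac{\chi_D(Q)}{|\PSL_2(\Z)_Q|}[\alpha_Q]$. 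Hence, since this divisor is supported in $\bbH$ (so no cusp value of $E(\cdot,s)$ is involved),
\[
	\calR_{1,0}(s;\Psi_D(f_{d'})^{12})=\calD_{F_{1,0}(\cdot,s)}\bigl(12\,Z_D(d')\bigr)=12\,\mathrm{Tr}_{d',D}(E(\cdot,s)),
\]
which is the identity already recorded in the text.

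For the first evaluation I would apply \cref{cor:L-Hecke-equiv} with $N=1$ and $m=0$ to $f=\Psi_D(f_d)^{12}\in\merM_*(\SL_2(\Z))$, which gives
\[
	\calR_{1,0}(s;\Psi_D(f_d)^{12}\,|_*T(p))=(p^s+p^{1-s})\,\calR_{1,0}(s;\Psi_D(f_d)^{12})=12\,(p^s+p^{1-s})\,\mathrm{Tr}_{d,D}(E(\cdot,s)).
\]
Note that the hypothesis $p\nmid N$ needed here is automatic, so the extra hypothesis $p\nmid D$ of the corollary is not yet used.

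For the second evaluation I would invoke the classical Hecke relation for twisted Heegner divisors: for a prime $p\nmid D$,
\[
	T(p)\,Z_D(d)=Z_D(dp^2)+\left(\tfrac{d}{p}\right)Z_D(d)+p\,Z_D(d/p^2),
\]
with the convention $Z_D(d/p^2)=0$ when $p^2\nmid d$. This is the divisor-level shadow of the multiplicativity of the generalized Borcherds product $\Psi_D$ combined with the Hecke action on CM points on $X(1)$ twisted by the genus character $\chi_D$ (cf.\ \cite{BruinierOno2010}), and it is precisely the behaviour of $\chi_D$ at $p$ that forces the hypothesis $p\nmid D$. Combining it with the Hecke equivariance of the divisor map (\cref{thm:Hecke-equivariance}), we get $\div(\Psi_D(f_d)^{12}\,|_*T(p))=T(p)\,\div(\Psi_D(f_d)^{12})=12\,T(p)\,Z_D(d)$, and applying the linear functional $\calD_{F_{1,0}(\cdot,s)}$ then yields
\[
	\calR_{1,0}(s;\Psi_D(f_d)^{12}\,|_*T(p))=12\Bigl(\mathrm{Tr}_{dp^2,D}(E(\cdot,s))+\left(\tfrac dp\right)\mathrm{Tr}_{d,D}(E(\cdot,s))+p\,\mathrm{Tr}_{d/p^2,D}(E(\cdot,s))\Bigr).
\]

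Equating the two expressions for $\calR_{1,0}(s;\Psi_D(f_d)^{12}\,|_*T(p))$, cancelling the factor $12$, and solving for $\mathrm{Tr}_{dp^2,D}(E(\cdot,s))$ gives exactly the asserted identity. The only non-formal ingredient is the Hecke relation for the twisted Heegner divisors $Z_D(d)$ (equivalently, the multiplicative Hecke relation for the Borcherds products $\Psi_D(f_d)^{12}$), and I expect writing that step carefully -- in particular pinning down the coefficient $\left(\tfrac dp\right)$ and the convention at $p^2\mid d$ -- to be the main obstacle; everything else is bookkeeping with the three representations of the Hecke algebra and the maps $\div$ and $\calD$ established in \cref{sec-Hecke-operator,sec:Rohrlich}. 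One should also keep track of the fact that the absence of a cusp component in $Z_D(d)$ is what makes the pairing $\calD_{F_{1,0}(\cdot,s)}(\div(\cdot))$ unambiguous without having to fix a value of $E(\cdot,s)$ at the cusp.
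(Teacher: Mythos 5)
Your argument is correct and reaches the identity by the same overall skeleton as the paper --- both proofs compute $\calR_{1,0}(s;\Psi_D(f_d)^{12}|_*T(p))$ once via \cref{cor:L-Hecke-equiv} (producing the eigenvalue $p^s+p^{1-s}$) and once via a Hecke recursion in $d$ --- but your second evaluation rests on a genuinely different external input. The paper never manipulates the divisor $T(p)Z_D(d)$ directly: it writes $pT_{1/2}(p^2)f_d=pf_{d/p^2}+\bigl(\tfrac{d}{p}\bigr)f_d+f_{dp^2}$ at the level of weight $1/2$ forms, uses that $\Psi_D$ converts sums to products (so $\calR_{1,0}$ is additive over the three factors), and then invokes the function-level Hecke equivariance of the generalized Borcherds lift, $\Psi_D(pT_{1/2}(p^2)f_d)^{12}=\Psi_D(f_d)^{12}|_*T(p)$ for $p\nmid D$, cited from Guerzhoy and Jeon--Kang--Kim as \eqref{eq:Hecke-Borcherds}. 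You instead assume the divisor-level relation $T(p)Z_D(d)=Z_D(dp^2)+\bigl(\tfrac{d}{p}\bigr)Z_D(d)+pZ_D(d/p^2)$ for twisted Heegner divisors and push it through $\calD_{F_{1,0}(\cdot,s)}$ using \cref{thm:Hecke-equivariance}. The two inputs are equivalent precisely modulo \cref{thm:Hecke-equivariance}: applying $\div$ to \eqref{eq:Hecke-Borcherds} and the weight-$1/2$ formula recovers exactly your Heegner recursion, coefficient $\bigl(\tfrac{d}{p}\bigr)$ and all, so your version is consistent with the paper's. What your route buys is that it makes explicit the vanishing of the cusp component of $\div(\Psi_D(f_d)^{12})$ (which the paper's identification $\mathrm{Tr}_{d,D}(E(\cdot,s))=\tfrac{1}{12}\calR_{1,0}(s;\Psi_D(f_d)^{12})$ tacitly requires), and it shows the corollary really only needs the Hecke action on Heegner divisors plus the eigenfunction property of $E(\tau,s)$; what it costs is that the twisted recursion --- the one step you yourself flag as the main obstacle --- is precisely the statement the paper prefers to deduce from \eqref{eq:Hecke-Borcherds} rather than to assume, so to be self-contained you would either have to prove it or cite it (e.g.\ Gross--Kohnen--Zagier in the twisted setting), whereas the paper's citation of \eqref{eq:Hecke-Borcherds} does that work for you.
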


This recurrence relation was shown to prove~\cite[Theorem 4.8]{JKKM2024} using the regularized inner product expression of $\mathrm{Tr}_{d,D}(E(\cdot, s))$ together with the self-adjointness of the Hecke operator. This result played a key role in providing a concise alternative proof of the decomposition formula of $\mathrm{Tr}_{d,D}(E(\cdot, s))$ by Duke--Imamo\={g}lu--T\'{o}th~\cite{DukeImamogluToth2011} and Kaneko--Mizuno~\cite{KanekoMizuno2020}. In the present proof, the role of self-adjointness is replaced by the Hecke equivariance of the divisor map. Moreover, while the earlier proof relied on analytic arguments regarding the convergence of the integral defining the regularized inner product, the current approach eliminates the need for such considerations.

\begin{proof}
For a prime $p$, let $T_{1/2}(p^2)$ denote the Hecke operator acting on the space of weight $1/2$ modular forms, (see~\cite[Chapter IV, Section 3]{Koblitz1993}). Since
\[
	pT_{1/2}(p^2) f_d = p f_{d/p^2} + \left(\frac{d}{p}\right) f_d + f_{dp^2},
\]
we obtain that
\begin{align*}
	\mathrm{Tr}_{dp^2, D}(E(\cdot, s)) = \frac{1}{12}\calR_{1,0}(s; \Psi_D(pT_{1/2}(p^2)f_d)^{12}) - \left(\frac{d}{p}\right) \mathrm{Tr}_{d,D}(E(\cdot, s)) - p \mathrm{Tr}_{d/p^2, D}(E(\cdot, s)).
\end{align*}
The various Hecke equivariances used in the proof of \cref{cor:L-Hecke-equiv}, and in particular that of the Borcherds product 
\begin{align}\label{eq:Hecke-Borcherds}
	\Psi_D(p T_{1/2}(p^2) f_d)^{12} = \Psi_D(f_d)^{12}|_* T(p)
\end{align}
for a prime $p \nmid D$, (shown in~\cite{Guerzhoy2006, JeonKangKim2023}), imply that
\begin{align*}
	\frac{1}{12} \calR_{1,0}(s; \Psi_D(pT_{1/2}(p^2)f_d)^{12}) &\overset{\eqref{eq:Hecke-Borcherds}}{=} \frac{1}{12} \calR_{1,0}(s; \Psi_D(f_d)^{12}|_* T(p))\\
		&\overset{\text{\cref{cor:L-Hecke-equiv}}}{=} \frac{1}{12} (p^s + p^{1-s}) \calR_{1,0}(s; \Psi_D(f_d)^{12})\\
		&= (p^s + p^{1-s}) \mathrm{Tr}_{d,D}(E(\cdot, s)),
\end{align*}
which concludes the proof.
\end{proof}

\section*{Funding}

The first author was supported by the Basic Science Research Program through the National Research Foundation of Korea (NRF) funded by the Ministry of Education (2022R1A2C1010487).
The second author was supported by the National Research Foundation of Korea (NRF) funded by the Ministry of Education (NRF-2022R1A2C1007188).
The third author was supported by the National Research Foundation of Korea(NRF) grant funded by the Korea government (MSIT) (RS-2024-00348504).
The fourth author was supported by the MEXT Initiative through Kyushu University's Diversity and Super Global Training Program for Female and Young Faculty (SENTAN-Q) and JSPS KAKENHI (JP21K18141 and JP24K16901). 

\bibliographystyle{amsalpha}
\bibliography{References} 

\end{document}